\newtheorem{Theorem}{Theorem}
\newtheorem{Lemma}{Lemma}
\begin{document}
%
\title{Distributed Low Rank Approximation of Implicit Functions of a Matrix}


\author{\IEEEauthorblockN{David P.~Woodruff}
\IEEEauthorblockA{IBM Almaden Research Center\\
San Jose, CA 95120, USA\\
Email: dpwoodru@us.ibm.com}
\and
\IEEEauthorblockN{Peilin Zhong}
\IEEEauthorblockA{Institute for Interdisciplinary Information Sciences\\
Tsinghua University, Beijing 100084, China\\
Email: zpl12@mails.tsinghua.edu.cn}
}


%


\maketitle

\begin{abstract}
We study distributed low rank approximation in which the matrix to be approximated is only implicitly represented across the different servers. For example, each of $s$ servers
may have an $n \times d$ matrix $A^t$, and we may be interested in computing a low rank approximation to $A = f(\sum_{t=1}^s A^t)$, where $f$ is a function which is applied entrywise to the matrix $\sum_{t=1}^s A^t$. We show for a wide class of functions $f$ it is possible to efficiently compute a $d \times d$ rank-$k$ projection matrix $P$ for which $\|A - AP\|_F^2 \leq \|A - [A]_k\|_F^2 + \varepsilon \|A\|_F^2$, where $AP$ denotes the projection of $A$ onto the row span of $P$, and $[A]_k$ denotes the best rank-$k$ approximation to $A$ given by the singular value decomposition. The communication cost of our protocols is $d \cdot (sk/\varepsilon)^{O(1)}$, and they succeed with high probability. Our framework allows us to efficiently compute a low rank approximation to an entry-wise softmax, to a Gaussian kernel expansion, and to $M$-Estimators applied entrywise (i.e., forms of robust low rank approximation). We also show that our additive error approximation is best possible, in the sense that any protocol achieving relative error for these problems requires significantly more communication. Finally, we experimentally validate our algorithms on real datasets.
\end{abstract}


%
\IEEEpeerreviewmaketitle

\section{Introduction}
In many situations the input data to large-scale data mining, pattern recognition, and information retrieval tasks is partitioned across multiple servers. This has motivated the distributed model as a popular research model for computing on such data. Communication is often a major bottleneck, and minimizing communication cost is crucial to the success of some protocols. Principal Component Analysis (PCA) is a useful tool for analyzing large amounts of distributed data. The goal of PCA is to find a low dimensional subspace which captures the variance of a set of data as much as possible. Furthermore, it can be used in various feature extraction tasks such as \cite{dang2014discriminative,zhao2010efficient}, and serve as a preprocessing step in other dimensionality reduction methods such as Linear Discriminant Analysis (LDA) \cite{duda2012pattern,cai2008training}.

PCA in the distributed model has been studied in a number of previous works, including \cite{blsw15,bw15,kannan2014principal, feldman2013turning, liang2014improved}. Several communication-efficient algorithms for approximate PCA were provided by these works. In the setting of \cite{feldman2013turning, liang2014improved}, the information of each data point is completely held by a unique server. In \cite{kannan2014principal} a stronger model called the ``arbitrary partition model'' is studied: each point is retrieved by applying a linear operation across the data on each server. Despite this stronger model, which can be used in several different scenarios, it is still not powerful enough to deal with certain cases. For instance, if each data point is partitioned across several servers, and one wants to apply a non-linear operation to the points, it does not apply. For example, the target may be to analyze important components of Gaussian random Fourier features~\cite{rahimi2007random} of data. Another example could be that each person is characterized by a set of health indicators. The value of an indicator of a person may be different across records distributed in each hospital for that person. Because the probability that a person has a problem associated with a health issue increases when a hospital detects the problem, the real value of an indicator should be almost the maximum among all records. This motivates taking the maximum value of an entry shared across the servers, rather than a sum. These examples cannot be captured by any previous model. Therefore, we focus on a stronger model and present several results in it.

\textbf{The model} \textit{(generalized partition model)}\textbf{.}\ \ \ In the generalized partition model, there are $s$ servers labeled $1$ to $s$. Server $t$ has a local matrix $A^t\in\mathbb{R}^{n\times d}$, $(n\gg d)$, of which each row is called a data point. Each of $s$ servers can communicate with server $1$, which we call the Central Processor (CP). Such a model simulates arbitrary point-to-point communication up to a multiplicative factor of $2$ in the number of messages and an additive factor of $\log_2 s$ per message. Indeed, if server $i$ would like to send a message to server $j$, it can send the message to server $1$ instead, together with the identity $j$, and server $1$ can forward this message to server $j$. The global data matrix $A\in\mathbb{R}^{n\times d}$ can be computed given $\{A^t\}_{t=1}^s$. That is, to compute the $(i,j)^{th}$ entry, $A_{i,j}=f(\sum_{t=1}^{s}A^t_{i,j})$, where $f:\mathbb{R}\rightarrow\mathbb{R}$ is a specific function known to all servers. Local computation in polynomial time and linear space is allowed.

\textbf{Approximate PCA and Low-rank matrix approximation.}\ \ \ Given $A\in\mathbb{R}^{n\times d},k\in\mathbb{N}_{+},\varepsilon\in\mathbb{R}_{+}$, a low-rank approximation to $A$ is $AP$, where $P$ is a $d\times d$ projection matrix with rank at most $k$ satisfying
$$||A-AP||_F^2\leq(1+\varepsilon)\min_{X:rank(X)\leq k} ||A-X||_F^2$$
or
$$||A-AP||_F^2\leq\min_{X:rank(X)\leq k} ||A-X||_F^2+\varepsilon||A||_F^2$$
where the Frobenius norm $||A||_F^2$ is defined as $\sum_{i,j}A_{i,j}^2$. If $P$ has the former property, we say $AP$ is a low-rank approximation to $A$ with relative error. Otherwise, $AP$ is an approximation with additive error. Here, $P$ projects rows of $A$ onto a low-dimensional space. Thus, it satisfies the form of approximate PCA. The target is to compute such a $P$.

In section \ref{applications}, we will see that the previous Gaussian random Fourier features and hospital examples can be easily captured by our model.

\textbf{Our contributions.}\ \ \ Our results can be roughly divided into two parts: 1. An algorithmic framework for additive error approximate PCA for general $f(\cdot)$ and several applications. 2. Lower bounds for relative error approximate PCA for some classes of $f(\cdot)$. Our lower bounds thus motivate the error guarantee of our upper bounds, as they show that achieving relative error cannot be done with low communication.

\textit{Algorithmic framework:} For a specific function $f(\cdot)$, suppose there is a distributed sampler which can sample rows from the global data matrix $A$ with probability proportional to the square of their $\ell_2$ norm. Let $Q_j$ be the probability that row $j$ is chosen. If the sampler can precisely report $Q_j$ for a sampled row $j$, a sampling based algorithm is implicit in the work of Frieze, Kannan, and Vempala \cite{frieze2004fast}: the servers together run the sampler to sample a sufficient number of rows. Each server sends its own data corresponding to the sampled rows to server $1$. Server $1$ computes the sampled rows of $A$, scales them using the set \{$Q_j$ $|$ row $j$ is sampled\} and does PCA on the scaled rows. This provides a low-rank approximation to $A$ with additive error. Unfortuately, in some cases, it is not easy to design an efficient sampler which can report the sampling probabilities without some error. Nevertheless, we prove that if the sampler can report the sampling probabilities approximately, then the protocol still works. Suppose the total communication of running the sampler requires $\mathcal{C}$ words. Then the communication of our protocol is $O(sk^2d/\varepsilon^2+\mathcal{C})$ words.

\textit{Applications:} One only needs to give a proper sampler to apply the framework for a specific function $f$.
The softmax (generalized mean) function is important in multiple instance learning (MIL) \cite{babenko2008simultaneous} and some feature extraction tasks such as \cite{boureau2010theoretical}. Combining this with the sampling method in \cite{jowhari2011tight,monemizadeh20101}, a sampler for the softmax (GM) function is provided. Here the idea is for each server to locally raise (the absolute value of) each entry of its matrix to the $p$-th power for a large value of $p > 1$, and then to apply the $\ell_{2/p}$-sampling technique of \cite{jowhari2011tight,monemizadeh20101} on the sum (across the servers) of the resulting matrices. This approximates sampling from the matrix which is the entry-wise maximum of the matrices across the servers, and seems to be a new application of $\ell_{2/p}$-sampling algorithms.

Several works \cite{rahimi2007random,le2013fastfood} provide approximate Gaussian RBF kernel expansions. Because Gaussian kernel expansions have the same length, simple uniform sampling works in these scenarios.

For some $\psi$-functions of M-estimators \cite{zhang1997parameter} such as $L_1-L_2$, ``fair'', and Huber functions, we develop a new sampler which may also be of independent interest. This is related to the fact that such functions have at most quadratic growth, and we can generalize the samplers in \cite{jowhari2011tight,monemizadeh20101} from $p$-th powers to more general functions of at most quadratic growth, similar to the generalization of Braverman and Ostrovsky \cite{bo10a} to the work of Indyk and Woodruff \cite{indyk2005optimal} in the context of norm estimation.

\textit{Lower bounds:} We also obtain several communication lower bounds for computing relative error approximate PCA. These results show hardness in designing relative error algorithms. When $f(x)\equiv\Omega(x^p)$ $(p>1)$, the lower bound is $\tilde{\Omega}((1+\varepsilon)^{-\frac{2}{p}}n^{1-\frac{1}{p}}d^{1-\frac{4}{p}})$ bits. Since $n$ could be very large, this result implies hardness when $f(x)$ grows quickly. When $f(x)=x^p($or $|x|^p)$ $(p\not=0)$, $\Omega(1/\varepsilon^2)$ bits of communication are needed. The result also improves an $\tilde{\Omega}(skd)$ lower bound shown in \cite{kannan2014principal} to $\tilde{\Omega}(\max(skd,1/\varepsilon^2))$. When $f(\cdot)$ is $\max(\cdot)$, we show an $\tilde{\Omega}(nd)$ bit lower bound which motivates us using additive error algorithms as well as a softmax (GM) function.

\textbf{Related work.}\ \ \ Sampling-based additive error low-rank approximation algorithms were developed by \cite{frieze2004fast}. Those algorithms cannot be implemented in a distributed setting directly as they assume that the sampler can report probabilities perfectly, which is sometimes hard in a distributed model. In the row partition model, relative error distributed algorithms are provided by \cite{feldman2013turning,liang2014improved}, and can also be achieved by \cite{ghashami2014relative}. Recently, \cite{kannan2014principal} provides a relative error algorithm in the linear, aforementioned arbitrary partition model. We stress that none of the algorithms above can be applied to our
generalized partition model.

\section{Preliminaries}
We define $[n]$ to be the set $\{1,...,n\}$. Similarly, $[n]-1$ defines the set $\{0,...,n-1\}$.  $A_i$, $A_{:,j}$ and $A_{i,j}$ denotes the $i^{th}$ row, the $j^{th}$ column and the $(i,j)^{th}$ entry of $A$ respectively. $A^t$ is the data matrix held by server $t$. $|\cdot|_p$, $||\cdot||_F$ and $||\cdot||$ means $p$-norm, Frobenius norm and Spectral norm. $[A]_k$ represents the best rank-$k$ approximation to $A$. Unless otherwise specified, all the vectors are column vectors.

If the row space of $A$ is orthogonal to the row space of $B$, $||A||_F^2+||B||_F^2=||A+B||_F^2$. This is the matrix Pythagorean theorem. Thus $||A-AP||_F^2=||A||_F^2-||AP||_F^2$ will be held for any projection matrix $P$. The goal of rank-$k$ approximation to $A$ can be also interpreted as finding a rank-$k$ projection matrix $P$ which maximizes $||AP||_F^2$.

For a vector $v\in \mathbb{R}^m$ and a set of coordinates $\mathcal{S}$, we define $v(\mathcal{S})$ as a vector in $\mathbb{R}^m$ satisfying:
$$\left\{\begin{array}{ll}v(\mathcal{S})_j=v_j&j\in\mathcal{S}\\ v(\mathcal{S})_j=0&j\not\in\mathcal{S}\end{array}\right.$$

\section{Sampling based algorithm}

We start by reviewing several results shown in \cite{frieze2004fast}. Suppose $A\in\mathbb{R}^{n\times d}$ and there is a sampler $\mathfrak{s}$ which samples row $i$ of $A$ with probability $Q_i$ satisfying $Q_i\geq c|A_i|_2^2/||A||_F^2$ for a constant $c\in (0,1]$. Let $\mathfrak{s}$ independently sample $r$ times and the set of samples be $\{A_{i_1},...,A_{i_r}\}$. We construct a matrix $B\in\mathbb{R}^{r\times d}$ such that $\forall i'\in[r],B_{i'}=A_{i_{i'}}/\sqrt{rQ_{i_{i'}}}$. In the following, we will show that we only need to compute the best low rank approximation to $B$ to acquire a good approximation to $A$.

As shown in \cite{frieze2004fast}, $\forall \theta>0$, the condition $||A^TA-B^TB||_F\leq \theta||A||_F^2$ will be violated with probability at most $O(1/(\theta^2r))$. Thus, if the number of samples $r$ is sufficiently large, $||A^TA-B^TB||_F$ is small with high probability.

\begin{Lemma} \label{lemma1}
If $||A^TA-B^TB||_F\leq \theta||A||_F^2$, then for all projection matrices $P'$ satisfying rank$(P')\leq k$,
$$|||AP'||_F^2-||BP'||_F^2|\leq \theta k||A||_F^2.$$
\end{Lemma}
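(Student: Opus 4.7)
The plan is to reduce everything to a trace inequality and then invoke the hypothesis through the Frobenius norm dominating the spectral norm. First I would observe that any rank-$k$ projection matrix $P'$ can be written as $P' = YY^T$ where $Y \in \mathbb{R}^{d \times k}$ has orthonormal columns (the columns are an orthonormal basis of the range of $P'$). Using this representation together with the cyclic property of the trace and the identity $Y^T Y = I_k$, I would rewrite
\[
\|AP'\|_F^2 = \mathrm{tr}(YY^T A^T A YY^T) = \mathrm{tr}(Y^T A^T A Y),
\]
and analogously $\|BP'\|_F^2 = \mathrm{tr}(Y^T B^T B Y)$. Subtracting and setting $M = A^T A - B^T B$, the quantity I need to bound becomes $|\mathrm{tr}(Y^T M Y)|$.

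Next I would expand the trace column-by-column: writing $Y = [y_1, \dots, y_k]$ with $\|y_i\|_2 = 1$, I have $\mathrm{tr}(Y^T M Y) = \sum_{i=1}^{k} y_i^T M y_i$. Each term satisfies $|y_i^T M y_i| \leq \|M\| \cdot \|y_i\|_2^2 = \|M\|$ by the variational characterization of the spectral norm. Summing over $i$ gives $|\mathrm{tr}(Y^T M Y)| \leq k \|M\|$. Finally, since $\|M\| \leq \|M\|_F$ for any matrix, the hypothesis $\|A^T A - B^T B\|_F \leq \theta \|A\|_F^2$ yields
\[
\big|\|AP'\|_F^2 - \|BP'\|_F^2\big| \leq k\|M\|_F \leq \theta k \|A\|_F^2,
\]
which is exactly the conclusion.

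I do not expect any real obstacle here; the argument is essentially a bookkeeping exercise. The only subtle point is making sure that the representation $P' = YY^T$ with orthonormal $Y$ is legitimate for an arbitrary rank-$k$ projection matrix (as opposed to only for orthogonal projections). Since in this paper ``projection matrix'' is used in the orthogonal sense (the rows of $A$ are projected onto a $k$-dimensional subspace, matching the PCA formulation), this representation is valid. If instead $P'$ were allowed to be an oblique idempotent, the spectral bound $|y_i^T M y_i| \leq \|M\|$ would still go through, but one would have to work slightly harder because $\|y_i\|_2$ need not be $1$; one would then absorb a factor depending on $\|P'\|$ into the argument. Under the standard orthogonal projection convention, the proof is complete.
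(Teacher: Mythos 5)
Your proof is correct and is essentially identical to the paper's: both pick an orthonormal basis for the range (row space) of $P'$, reduce $\|AP'\|_F^2-\|BP'\|_F^2$ to a sum of quadratic forms $\sum_j u_j^T(A^TA-B^TB)u_j$, bound each term by the spectral norm of $A^TA-B^TB$, and finish with $\|\cdot\|\le\|\cdot\|_F$. The only cosmetic difference is that you phrase the reduction via trace cyclicity while the paper uses the norm identity $\|AUU^T\|_F=\|AU\|_F$; your closing remark about oblique idempotents is a reasonable sanity check but is not needed here since the paper works with orthogonal projections.
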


\begin{proof}
Without loss of generality, we suppose rank$(P')=k$. Let $\{U_{:,1},...,U_{:,k}\}$ be an orthogonal basis of the row space of $P'$. We have
\begin{align*}
&\left|||AP'||_F^2-||BP'||_F^2\right|\\
&=\left|||AUU^T||_F^2-||BUU^T||_F^2\right|
=\left|||AU||_F^2-||BU||_F^2\right|\\
&=\left|\sum_{j=1}^k (U_{:,j})^TA^TAU_{:,j}-\sum_{j=1}^k (U_{:,j})^TB^TBU_{:,j} \right|\\
&\leq \sum_{j=1}^k \left|(U_{:,j})^T(A^TA-B^TB)U_{:,j}\right|\\
&\leq \sum_{j=1}^k \left|(U_{:,j})^T\right|_2||A^TA-B^TB||\left|U_{:,j}\right|_2\leq k\theta||A||_F^2
\end{align*}
$||A^TA-B^TB||\leq||A^TA-B^TB||_F\leq \theta||A||_F^2$ implies the last inequality.
\end{proof}

\begin{Lemma} \label{lemma2}
If for all $P'$ satisfying rank$(P')=k$, $|||AP'||_F^2-||BP'||_F^2|\leq \varepsilon||A||_F^2$, then the rank-$k$ projection matrix $P$ which satisfies $BP=[B]_k$ also provides a good rank-$k$ approximation to $A$:
$$||A-AP||_F^2\leq ||A-[A]_k||_F^2+2\varepsilon||A||_F^2$$
\end{Lemma}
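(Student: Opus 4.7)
My plan is to exploit the Pythagorean identity $\|A-AP\|_F^2 = \|A\|_F^2 - \|AP\|_F^2$ that was recalled in the Preliminaries, so that the task reduces to lower bounding $\|AP\|_F^2$ in terms of the best possible $\|AP^*\|_F^2$, where $P^*$ is the rank-$k$ projection with $AP^* = [A]_k$. Since $\|A\|_F^2$ is common to both sides of the target inequality, such a lower bound on $\|AP\|_F^2$ translates directly into the desired additive-error guarantee.

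First I would write down the two instances of the hypothesis I need: applying it to $P$ itself gives $\|AP\|_F^2 \geq \|BP\|_F^2 - \varepsilon\|A\|_F^2$, and applying it to the optimal $P^*$ gives $\|BP^*\|_F^2 \geq \|AP^*\|_F^2 - \varepsilon\|A\|_F^2$. Next, I would use the defining property of $P$, namely that $BP = [B]_k$ is the best rank-$k$ approximation to $B$, so $\|BP\|_F^2 \geq \|BP^*\|_F^2$ since $P^*$ is itself a competing rank-$k$ projection. Chaining these three inequalities yields
\begin{equation*}
\|AP\|_F^2 \;\geq\; \|AP^*\|_F^2 - 2\varepsilon\|A\|_F^2.
\end{equation*}

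Finally, I would invoke Pythagoras twice, once for $P$ and once for $P^*$, to convert this bound on the projected norms into a bound on the residual norms:
\begin{equation*}
\|A-AP\|_F^2 = \|A\|_F^2 - \|AP\|_F^2 \leq \|A\|_F^2 - \|AP^*\|_F^2 + 2\varepsilon\|A\|_F^2 = \|A-[A]_k\|_F^2 + 2\varepsilon\|A\|_F^2,
\end{equation*}
which is exactly the conclusion. I do not anticipate a real obstacle here; the only subtlety is making sure the hypothesis is stated as applying to \emph{all} rank-$k$ projection matrices (not just $P$), since we need to invoke it on $P^*$ as well, and ensuring that the matrix Pythagorean theorem applies in the form stated (which it does for any projection $P$, as noted in the Preliminaries).
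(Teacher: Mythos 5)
Your proposal is correct and follows essentially the same chain of inequalities as the paper's proof: apply the hypothesis to $P$ and to $P^*$, use optimality of $BP=[B]_k$ to compare $\|BP\|_F^2$ with $\|BP^*\|_F^2$, and convert via the matrix Pythagorean identity. The only difference is presentational — you list the three ingredients separately before chaining, whereas the paper writes the chain in one line.
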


\begin{proof}
Suppose $P^*$ provides the best rank-$k$ approximation to $A$. We have,
\begin{align*}
&||AP||_F^2\geq||BP||_F^2-\varepsilon||A||_F^2\geq||BP^*||_F^2-\varepsilon||A||_F^2\\
&\geq||AP^*||_F^2-2\varepsilon||A||_F^2=||[A]_k||_F^2-2\varepsilon||A||_F^2
\end{align*}
\end{proof}

 Thus, combining with Lemma \ref{lemma1} and Lemma \ref{lemma2}, if matrix $B$ has $r=\Theta(k^2/\varepsilon^2)$ rows, the projection matrix $P$ which provides $[B]_k$ also provides an $O(\varepsilon)$ additive error rank-$k$ approximation to $A$ with constant probability. So, we can just compute the projection matrix which provides the best rank-$k$ approximation to $B$.

\section{Framework for distributed PCA}\label{framework}
Our protocol implements the previous sampling based algorithm in a distributed setting. Server $t\in[s]$ holds a local matrix $A^t\in\mathbb{R}^{n\times d}$ and the $(i,j)^{th}$ entry of the global data matrix $A$ is computed by $A_{i,j}=f(\sum_{t=1}^{s}A^t_{i,j})$. A framework for computing a projection matrix $P$ such that $AP$ is a low-rank approximation to $A$ is presented in Algorithm \ref{PCA}. Here, $\mathfrak{s}$ is the same as that defined in the previous section but in the distributed model. Notice that $\mathfrak{s}$ may be different when $f(.)$ is different.
\begin{algorithm}
\caption{Compute $P$}\label{PCA}
  \begin{algorithmic}[1]
  \State \textbf{Input:}  $\{A^t\in \mathbb{R}^{n\times d}\}_{t=1}^s$; $k\in[d]$; $\varepsilon>0$;
  \State \textbf{Output:} rank-$k$ projection matrix $P$.
  \State Set parameters $r=\Theta(\frac{k^2}{\varepsilon^2})$, $\gamma=O(\sqrt{\frac1{r}})$.
  \For {$i':=1\rightarrow r$}
  \State $\mathfrak{s}$ samples row $i_{i'}$ from $A$ and reports $\hat{Q}_{i_{i'}}$ satisfying $(1-\gamma) Q_{i_{i'}}\leq\hat{Q}_{i_{i'}}\leq (1+\gamma)Q_{i_{i'}}$
  \EndFor
  \State $\forall i'\in [r]$, server $t$ sends $A^t_{i_{i'}}$ to server $1$ and server $1$ computes $B\in\mathbb{R}^{r\times d}$ satisfying $B_{i'}=\frac{A_{i_{i'}}}{\sqrt{r\hat{Q}_{i_{i'}}}}$
  \State Server $1$ computes the top $k$ singular vectors $V_{:,1},...,V_{:,k}$ in the row space of $B$ and outputs $P=VV^T$
  \end{algorithmic}
\end{algorithm}

In Algorithm \ref{PCA}, each row $i'$ of $B$ is scaled via $\hat{Q}_{i_{i'}}$ but not $Q_{i_{i'}}$. However this is not an issue.
\begin{Lemma}\label{lemma3}
For $A$, $B$ in Algorithm \ref{PCA}, $||A^TA-B^TB||_F\leq O(\varepsilon/k)||A||_F^2$ holds with constant probability.
\end{Lemma}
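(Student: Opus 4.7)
The plan is to decouple the two sources of error in $B$: the randomness of the sampling and the multiplicative inaccuracy of the reported probabilities $\hat{Q}$. I would introduce a hypothetical ``idealized'' sketch $\tilde{B} \in \mathbb{R}^{r \times d}$ whose rows are $\tilde{B}_{i'} = A_{i_{i'}}/\sqrt{r Q_{i_{i'}}}$, i.e., scaled by the \emph{exact} sampling probabilities, and then compare $A^T A$ to $\tilde{B}^T \tilde{B}$ (random error only) and $\tilde{B}^T \tilde{B}$ to $B^T B$ (deterministic perturbation due to $\hat{Q}$ vs.\ $Q$), concluding by the triangle inequality.

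For the first comparison, I would invoke the Frieze--Kannan--Vempala bound quoted just before Lemma~\ref{lemma1}: for any $\theta > 0$, $\|A^T A - \tilde{B}^T \tilde{B}\|_F \leq \theta \|A\|_F^2$ fails with probability $O(1/(\theta^2 r))$. Setting $\theta = c'\varepsilon/k$ for a small constant $c'$ and recalling $r = \Theta(k^2/\varepsilon^2)$, the failure probability is a small constant, and we obtain $\|A^T A - \tilde{B}^T \tilde{B}\|_F \leq O(\varepsilon/k)\|A\|_F^2$ with constant probability.

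For the second comparison, I would write
\begin{equation*}
\tilde{B}^T \tilde{B} - B^T B = \sum_{i'=1}^{r} A_{i_{i'}}^T A_{i_{i'}} \left(\frac{1}{r Q_{i_{i'}}} - \frac{1}{r \hat{Q}_{i_{i'}}}\right),
\end{equation*}
and use the multiplicative guarantee $(1-\gamma)Q_{i_{i'}} \leq \hat{Q}_{i_{i'}} \leq (1+\gamma)Q_{i_{i'}}$ to bound the scalar coefficient by $\gamma/((1-\gamma) r Q_{i_{i'}})$. Since $\|A_{i_{i'}}^T A_{i_{i'}}\|_F = |A_{i_{i'}}|_2^2$, and crucially the sampler satisfies $Q_{i_{i'}} \geq c|A_{i_{i'}}|_2^2/\|A\|_F^2$, each term has Frobenius norm at most $\gamma\|A\|_F^2/((1-\gamma) c r)$. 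Summing $r$ terms by the triangle inequality, the total is $O(\gamma/c)\|A\|_F^2 = O(\varepsilon/k)\|A\|_F^2$, since $\gamma = O(\sqrt{1/r}) = O(\varepsilon/k)$.

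The main technical point is the second bound: the naive worry is that $1/\hat{Q}_{i_{i'}}$ could be enormous for lightweight rows, amplifying any additive error in $\hat{Q}_{i_{i'}}$. The \emph{multiplicative} nature of the sampler's guarantee is exactly what saves us, combined with the $\ell_2^2$-proportionality $Q_{i_{i'}} \geq c|A_{i_{i'}}|_2^2/\|A\|_F^2$, which ensures every summand is bounded by $O(\|A\|_F^2/r)$ up to the factor of $\gamma$. Once both halves are bounded by $O(\varepsilon/k)\|A\|_F^2$, the triangle inequality yields $\|A^T A - B^T B\|_F \leq O(\varepsilon/k)\|A\|_F^2$ with constant probability.
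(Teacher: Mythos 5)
Your proof is correct, and it takes a genuinely different route from the paper. The paper redoes the Frieze--Kannan--Vempala second-moment calculation from scratch on the actual $B^TB$ (which already incorporates the $\hat{Q}$ scaling): it first bounds the bias $|E[(B^TB)_{i,j}] - (A^TA)_{i,j}| \leq 2\gamma|A_{:,i}|_2|A_{:,j}|_2$, then invokes $x^2 \leq 2(x-y)^2 + 2y^2$ to reduce the mean-squared entrywise error to a variance term plus the squared bias, bounds the variance, sums over $(i,j)$, and applies Markov. Both the sampling randomness and the $\hat{Q}$ inaccuracy are absorbed into this one calculation. You instead decouple the two error sources at the matrix level via the ideal sketch $\tilde{B}$ and a triangle inequality: the $A^TA$ vs.\ $\tilde{B}^T\tilde{B}$ piece is the vanilla FKV statement quoted before Lemma~\ref{lemma1} (applied with the right parameters), and the $\tilde{B}^T\tilde{B}$ vs.\ $B^TB$ piece is a purely deterministic perturbation bound. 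Your route is more modular and arguably more transparent --- it makes explicit that FKV alone suffices for the random part and that the $\hat{Q}$ error is handled separately as a deterministic $O(\gamma/c)\|A\|_F^2$ perturbation, correctly highlighting that the multiplicative nature of the $\hat{Q}$ guarantee plus $Q_i \geq c|A_i|_2^2/\|A\|_F^2$ is exactly what makes each summand $O(\|A\|_F^2/r)$. The paper's approach is self-contained in that it does not invoke FKV as a black box for the scaled-probability case, at the cost of an entrywise bias-variance computation that is harder to parse. One small imprecision in your write-up: you say ``Setting $\theta = c'\varepsilon/k$ for a \emph{small} constant $c'$ \dots the failure probability is a small constant,'' but the FKV failure probability is $O(1/(\theta^2 r))$, which \emph{grows} as $c'$ shrinks; what you actually need is the hidden constant in $r = \Theta(k^2/\varepsilon^2)$ to be large relative to $1/c'^2$ (the paper takes $r = \lceil 1440k^2/(\varepsilon^2 c)\rceil$ for exactly this reason). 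That is just constant bookkeeping and does not affect the correctness of your argument.
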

\begin{proof}
We set $r=\lceil\frac{1440k^2}{\varepsilon^2c}\rceil$ where $c$ is a constant satisfying the condition: $\forall i\in[n],Q_i\geq c|A_i|_2^2/||A||_F^2$. Assume $\gamma\leq0.5$. Then we have $\forall i\in[r],1-2\gamma\leq Q_{j_i}/\hat{Q}_{j_i}\leq1+2\gamma$.
\begin{align*}
& \left|E((B^TB)_{i,j})-(A^TA)_{i,j}\right|\\
&=\left|\sum_{t=1}^r\sum_{m=1}^nQ_m\frac{A_{m,i}A_{m,j}}{r\hat{Q}_m}-(A^TA)_{i,j}\right|\\
&=\left|\sum_{m=1}^nA_{m,i}A_{m,j}(\frac{Q_m}{\hat{Q}_m}-1)\right|\\
&\leq 2\gamma\sum_{m=1}^n\left|A_{m,i}A_{m,j}\right|\\
&\leq 2\gamma |A_{:,i}|_2|A_{:,j}|_2
\end{align*}
Due to $x^2 \leq 2(x-y)^2 + 2y^2$ for any $x,y\in\mathbb{R}$,
\begin{align*}
& E(((B^TB)_{i,j}-(A^TA)_{i,j})^2)\\
&\leq 2E(((B^TB)_{i,j}-E(B^TB)_{i,j})^2)+8\gamma^2|A_{:,i}|^2_2|A_{:,j}|^2_2 \\
&\leq 2\sum_{t=1}^r E((B_{t,i}B_{t,j})^2)+8\gamma^2|A_{:,i}|^2_2|A_{:,j}|^2_2\\
&\leq 2(1+2\gamma)^2\frac{||A||_F^2}{cr}\sum_{t=1}^n\frac{A_{t,i}^2A_{t,j}^2}{|A_t|_2^2}+8\gamma^2|A_{:,i}|^2_2|A_{:,j}|^2_2
\end{align*}
\begin{align*}
& E(||(B^TB)-(A^TA)||_F^2)\\
&=\sum_{i,j=1}^dE(((B^TB)_{i,j}-(A^TA)_{i,j})^2)\\
&\leq2(1+2\gamma)^2\frac{||A||_F^2}{cr}\sum_{t=1}^n\frac1{|A_t|_2^2}\sum_{i,j=1}^dA_{t,i}^2A_{t,j}^2+\frac{8}{cr}||A||_F^4\\
&\leq \frac{16}{cr}||A||_F^4\leq\frac{\varepsilon^2}{90k^2}||A||_F^4
\end{align*}
Due to Markov's inequality,
$$\Pr(||A^TA-B^TB||_F\geq \frac{\varepsilon}{3k}||A||_F^2)\leq \frac1{10}$$
\end{proof}

Without the sampling stage, the only communication involved is to collect $\Theta(k^2/\varepsilon^2)$ rows from $s$ servers.
\begin{Theorem} \label{upper1}
With constant probability, Algorithm \ref{PCA} outputs $P$ for which $AP$ is an $O(\varepsilon)$ additive error rank-$k$ approximation to $A$. Moreover, if the communication of sampling and reporting $\hat{Q}_{\{i_1,...,i_r\}}$ uses $\mathcal{C}$ words, the communication of the overall protocol needs $O(sk^2d/\varepsilon^2+\mathcal{C})$ words.
\end{Theorem}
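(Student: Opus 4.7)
The plan is to chain together Lemmas \ref{lemma1}--\ref{lemma3} to establish the approximation guarantee, then separately bound the communication cost. First, I invoke Lemma \ref{lemma3}, which already gives us that $\|A^TA - B^TB\|_F \leq (\varepsilon/(3k))\|A\|_F^2$ holds with probability at least $9/10$ (for appropriate choice of the constants in $r = \Theta(k^2/\varepsilon^2)$ and $\gamma = O(1/\sqrt{r})$). Condition on this event for the remainder of the analysis.

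Next, I feed this bound into Lemma \ref{lemma1} with $\theta = O(\varepsilon/k)$ to conclude that every rank-$k$ projection matrix $P'$ satisfies
\begin{equation*}
\bigl|\|AP'\|_F^2 - \|BP'\|_F^2\bigr| \leq \theta k \|A\|_F^2 = O(\varepsilon)\|A\|_F^2.
\end{equation*}
Now the specific projection $P = VV^T$ computed in Algorithm \ref{PCA} satisfies $BP = [B]_k$ by construction, so I apply Lemma \ref{lemma2} (with the uniform error bound just established, and with $\varepsilon$ replaced by $O(\varepsilon)$) to obtain $\|A - AP\|_F^2 \leq \|A - [A]_k\|_F^2 + O(\varepsilon)\|A\|_F^2$. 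Rescaling $\varepsilon$ by the hidden constant at the start gives the stated additive-error guarantee, still with constant success probability.

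For the communication cost, the protocol has two phases. The sampling phase, by assumption, costs $\mathcal{C}$ words to both draw the $r$ indices $i_1, \dots, i_r$ and report the approximate probabilities $\hat Q_{i_{i'}}$. After that, each of the $s$ servers transmits the $r = \Theta(k^2/\varepsilon^2)$ sampled rows of its local matrix $A^t$ to server $1$; each such row contains $d$ entries, so this phase costs $O(s r d) = O(sk^2 d/\varepsilon^2)$ words. Summing the two phases yields the total $O(sk^2 d/\varepsilon^2 + \mathcal{C})$. (Server $1$ then reconstructs $B$ and performs the rank-$k$ SVD locally, which is free of communication.)

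There is no real obstacle here: the approximation bound is obtained by a mechanical composition of the three lemmas, and the communication bound is a direct count of rows transmitted. The one small subtlety worth being explicit about is that Algorithm \ref{PCA} uses the \emph{approximate} weights $\hat Q$ when forming $B$; this is precisely why Lemma \ref{lemma3} (rather than the exact-probability version from \cite{frieze2004fast}) is the right input, and one should note that the assumption $\gamma \leq 0.5$ used in its proof is consistent with the choice $\gamma = O(1/\sqrt{r})$ made in Algorithm \ref{PCA} for sufficiently large $r$.
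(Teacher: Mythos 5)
Your proof is correct and follows essentially the same route the paper itself implies: chain Lemma \ref{lemma3} into Lemma \ref{lemma1} with $\theta = O(\varepsilon/k)$ to get the uniform deviation bound over rank-$k$ projections, then apply Lemma \ref{lemma2} to the particular $P$ giving $[B]_k$, and separately count $\mathcal{C}$ words for sampling plus $O(srd)=O(sk^2d/\varepsilon^2)$ words for the $s$ servers each shipping $r$ rows of width $d$ to server $1$. The paper leaves this composition implicit (the sentence following Lemma \ref{lemma2} and the sentence preceding the theorem together constitute the argument), and your added remarks — that Lemma \ref{lemma3} rather than the exact-probability Frieze--Kannan--Vempala bound is the right input because Algorithm \ref{PCA} scales by $\hat Q$ and not $Q$, and that $\gamma = O(1/\sqrt{r}) \leq 0.5$ is consistent for large enough $r$ — are accurate and make the dependencies explicit.
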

In addition, to boost the success probability to $1-\delta$, we can just run Algorithm \ref{PCA} $O(\log(1/\delta))$ times and output the matrix $P$ with maximum $||BP||_F^2$. The communication is the same as before up to an $O(\log(1/\delta))$ factor.

\section{Generalized sampler}
To apply Algorithm \ref{PCA} for a specific $f(\cdot)$, we should implement the distributed sampler $\mathfrak{s}$ which can sample rows of a global data $A$, and row $i$ is sampled with probability approximately proportional to $|A_i|_2^2$. An observation is that $|A_i|_2^2=\sum_{j=1}^d A_{i,j}^2$. Thus, the row sampling task can be converted into an entry sampling task. If an entry is sampled, then we choose the entire row as the sample. In the entry-sampling task, we want to find a sampler which can sample entries of $A$ such that $A_{i,j}$ is chosen with probability proportional to $A_{i,j}^2=f(\sum_{t=1}^s A^t_{i,j})^2$. Another observation is that in Algorithm \ref{PCA}, we only need to guarantee the probability that row $i$ is chosen is not less than $c|A_i|_2^2/||A||_F^2$ for a constant $c$. Thus, if there exists a function $z(x)$ and a constant $c\geq1$ such that $z(x)/c\leq f(x)^2\leq cz(x)$, then the sampler which samples $A_{i,j}$ with probability proportional to $z(A_{i,j})$ can be used in Algorithm \ref{PCA} when computing $f$.

We consider an abstract continuous function $z(x):\mathbb{R}\rightarrow\mathbb{R}$ which satisfies the property $\mathcal{P}$: $\forall x_1,x_2\in\mathbb{R},\text{ if }|x_1|\geq|x_2|,\ x_1^2/z(x_1)\geq x_2^2/z(x_2),z(x_1)\geq z(x_2)$, and $z(0)=0$. Suppose $a^t\in\mathbb{R}^l$ is located on server $t$. Let $a=\sum_{t=1}^s a^t$ and $Z(a)=\sum_{i=1}^l z(a_i)$.
\begin{Theorem}\label{theoremsampler}
There is a protocol which outputs $i\in[l]$ with probability $(1\pm\varepsilon)z(a_i)/Z(a)+O(l^{-C})$ and reports a $(1\pm \varepsilon)$ approximation to $Z(a)$ with probability at least $1-O(l^{-C})$, where $C$ is a non-negative constant. The communication is $s\cdot poly(log(l)/\varepsilon)$ bits.
\end{Theorem}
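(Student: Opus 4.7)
The plan is to adapt the precision-sampling paradigm used for $\ell_p$-sampling by Jowhari--Saglam--Tardos and Monemizadeh--Woodruff to the general function $z$, following the same template by which Braverman and Ostrovsky generalized Indyk--Woodruff's $L_p$-norm estimator to ``$G$-moments.'' Property $\mathcal{P}$ is precisely the regime that this approach can handle: it says $z$ is monotone in $|x|$ and that $x^2/z(x)$ is non-decreasing in $|x|$, so $z$ grows at least like a constant and at most quadratically in $|x|$. In particular, $z(x)\geq 0$ for all $x$, so viewing $\sqrt{z(a_i)}$ as entries of a virtual $\ell_2$-vector will make sense.

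I would first build the $(1\pm\varepsilon)$-approximation of $Z(a)$ as both a warm-up and a building block. A Braverman--Ostrovsky sketch for functions of at most quadratic growth is linear in the input, so each server computes the sketch of its local $a^t$ and ships it to the CP, which sums the sketches (exploiting $a=\sum_t a^t$) and reads off an estimate of $Z(a)=\sum_{i=1}^l z(a_i)$. The communication is $s\cdot\mathrm{poly}(\log l/\varepsilon)$ bits, and the failure probability can be driven below $l^{-C}$ by standard independent-repetition/median tricks.

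For the sampler itself, I would use precision sampling. With shared randomness, the servers generate i.i.d.\ $u_1,\dots,u_l\sim\mathrm{Unif}(0,1)$ and consider the virtual quantities $Y_i = z(a_i)/u_i$. A standard computation from the $\ell_p$-sampling literature shows that, conditioned on $\max_i Y_i$ lying in a well-separated ``typical'' regime, $\arg\max_i Y_i$ is distributed as $(1\pm\varepsilon)z(a_i)/Z(a)$, and the separation event holds with probability $1-O(l^{-C})$ after polylog blowups. To locate $\arg\max_i Y_i$ in the generalized partition model, I would bucket the coordinates into geometric levels $S_j=\{i:u_i\in[2^{-j-1},2^{-j})\}$ using the shared $u_i$'s and run a CountSketch / $\ell_2$-heavy-hitters data structure on $a$ restricted to each $S_j$. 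Since $a|_{S_j}=\sum_t a^t|_{S_j}$ is still linear in each server's data, the CP can combine sketches. For each candidate coordinate $i$ returned, every server sends the single value $a^t_i$ so that the CP can evaluate $z(a_i)$ exactly, compute $Y_i$, and take the global argmax across levels.

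The main obstacle is showing that property $\mathcal{P}$ suffices for the $\ell_2$-heavy-hitter step to correctly identify the top $Y_i$. The key sandwich is this: a coordinate with very large $Y_i=z(a_i)/u_i$ inside level $S_j$ must have $z(a_i)$ large relative to the $z$-mass of $S_j$, and because $x^2/z(x)$ is non-decreasing this transfers to $|a_i|^2$ being $\ell_2$-heavy relative to the tail of $a|_{S_j}$, so CountSketch recovers it; conversely, the monotonicity $z(x_1)\geq z(x_2)$ when $|x_1|\geq |x_2|$ ensures that any $\ell_2$-heavy coordinate of $a|_{S_j}$ realizes the correspondingly large $z$-value. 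Once this sandwich is in hand, union-bounding CountSketch failure over the $O(\log l)$ levels and combining with the separation event and the $Z$-estimation failure yields the stated $(1\pm\varepsilon)z(a_i)/Z(a)+O(l^{-C})$ sampling guarantee together with the $(1\pm\varepsilon)$ estimate of $Z(a)$, all within $s\cdot\mathrm{poly}(\log l/\varepsilon)$ bits of communication.
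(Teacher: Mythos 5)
Your proposal is a precision-sampling (Jowhari--Saglam--Tardos / Andoni) route: draw shared i.i.d.\ uniforms $u_i$, declare the sample to be $\arg\max_i z(a_i)/u_i$, and recover that argmax with per-level $\ell_2$-heavy-hitter sketches. The paper instead follows the Monemizadeh--Woodruff \emph{layering} route: it buckets coordinates by $z$-value into geometric classes $S_i(a)=\{j : z(a_j)\in[(1+\varepsilon)^i,(1+\varepsilon)^{i+1})\}$, estimates each class size $|S_i(a)|$ via subsampling plus \textbf{Z-HeavyHitters}, uses coordinate injection to force every relevant class to ``contribute,'' samples a class with probability $\hat s_i(1+\varepsilon)^i/\hat Z$, and then returns the minimizer of a hash $g$ within the chosen class as a near-uniform representative. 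These are genuinely different decompositions. Your variant also differs structurally from the textbook JST recipe: because $z$ is nonlinear you cannot realize $z(a_i)/u_i$ by sketching a linearly rescaled vector $a_i/g(u_i)$, so you instead partition by dyadic level of $u_i$ and sketch the \emph{unscaled} restriction $a|_{S_j}$ --- this is a nontrivial adaptation whose correctness hinges on exactly the same ``sandwich'' that drives the paper's Lemma~\ref{lemmaimproveheavy}: if $z(a_i)\ge Z(a|_{S_j})/B$, then by property $\mathcal P$ ($x^2/z(x)$ and $z$ both monotone in $|x|$) one gets $a_i^2/|a|_{S_j}|_2^2\ge z(a_i)/Z(a|_{S_j})\ge 1/B$, so an $\ell_2$-heavy-hitters sketch finds it. In that sense both proofs share the key technical lemma but wrap it in different sampling machinery. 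What each buys: the paper's layering gives direct, per-class control of $\hat s_i$ and $\hat Z$ and a clean uniform-within-class guarantee (inherited from MW's Proposition~3.1 and Theorem~3.3), at the cost of the coordinate-injection bookkeeping; your precision sampling avoids injection and class bookkeeping, but you still need to (i) build a separate $(1\pm\varepsilon)$ estimator of $Z(a)$, which you correctly delegate to a Braverman--Ostrovsky-type sketch (legal because $\mathcal P$ implies $z$ is slowly varying and of at most quadratic growth), and (ii) make the ``typical regime'' event rigorous --- specifically, show with $1-l^{-\Theta(C)}$ probability that $Z(a|_{S_{j^\star}})=O(\mathrm{poly}(\log l/\varepsilon))\cdot z(a_{i^\star})$ at the level $j^\star$ that actually contains the argmax, so that $i^\star$ is $\ell_2$-heavy there. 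Your write-up leaves that concentration step at the ``standard computation'' level; it is true but would need to be spelled out, since it is where your per-level unscaled sketching genuinely departs from the scaled-vector sketching in the $\ell_p$-sampling template you are invoking.
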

Notice that the additive error of $O(l^{-C})$ can effectively be treated as
relative error, since if
$O(l^{-C})$ is an $\varepsilon$-fraction of $z(a_j)/Z(a)$, then the probability that $j$ is sampled at least once is negligible. Therefore, this sampler can be applied in Algorithm \ref{PCA}. In the following, we will show how to sample coordinates of $a$ with the desired distribution.

\subsection{Overview}
We give an overview of our sampler. We define $S_i$ to be the interval $[(1+\varepsilon)^i,(1+\varepsilon)^{i+1})$. Similar to \cite{monemizadeh20101}, coordinates of $a$ are conceptually divided into several classes
$$S_i(a)=\{j\ |\ z(a_j)\in S_i\}$$
Here, $i\in\mathbb{Z}$. Although the number of classes is infinite, there are at most $l$ non-empty classes which we care about. To sample a coordinate, our procedure can be roughly divided into two steps:
\begin{enumerate}
\item Sample a class $S_i(a)$ with probability approximately proportional to $\sum_{j\in S_i(a)}z(a_j)/Z(a)$.
\item Uniformly output a coordinate in $S_i(a)$.
\end{enumerate}
For convenience, we call $\sum_{j\in S_i(a)}z(a_j)$ the contribution of $S_i(a)$. To do the first step, we want to estimate $Z(a)$ and the contribution of each class. We can achieve this by estimating the size of each class. Intuitively, if the contribution of $S_i(a)$ is ``non-negligible'', there are two possible situations: 1. $\forall j\in S_i(a)$, $j$ is ``heavy'', $z(a_j)/Z(a)$ is large; 2. the size of $S_i(a)$ is large. For the first case, we just find out all the ``heavy'' coordinates to get the size of $S_i(a)$. For the second case, if we randomly drop some coordinates, there will be some coordinates in $S_i(a)$ survived, and all of those coordinates will be ``heavy'' among survivors. Thus we can find out all the ``heavy'' survivors to estimate the size of $S_i(a)$. The second step is easier, we will show it in \ref{thesampler}.

In \ref{heavyhitters}, we propose a protocol which can pick out all the ``heavy'' coordinates. In \ref{estimationofz}, we demonstrate the estimation of the size of each class and $Z(a)$. In \ref{thesampler}, we complete our sampler.

\subsection{Z-HeavyHitters} \label{heavyhitters}
A streaming algorithm which can find frequent elements whose squared frequency is large compared to the second moment (the ``$F_2$-value'' of a stream) is described in \cite{ccf04}. Because it provides a linear sketch, it can be easily converted into a distributed protocol in our setting. We call the protocol \textbf{HeavyHitters}. The usage of \textbf{HeavyHitters} is as follows. $v^t\in\mathbb{R}^m$ is located in server $t$. Let $v=\sum_{t=1}^s v^t$. With success probability at least $1-\delta$, \textbf{HeavyHitters}$(v, B, \delta)$ reports all the coordinates $j$ such that $v_j^2\geq |v|_2^2/B$. Server $1$ can get reports from \textbf{HeavyHitters}. The communication of this protocol needs $O(sB\cdot \textrm{poly}(\log(m))\log(1/\delta))$ words.

By using \textbf{HeavyHitters}, we develop a protocol called \textbf{Z-HeavyHitters}$(v\in\mathbb{R}^m, B, \delta)$ shown in Algorithm \ref{improvedheavyhitters}.
\begin{algorithm}
\caption{\textbf{Z-HeavyHitters}}\label{improvedheavyhitters}
  \begin{algorithmic}[1]
  \State \textbf{Input:}  $v\in\mathbb{R}^m$; $B>0$; $\delta>0$.
  \State \textbf{Output:} $D$
  \State $\forall t\in[\lceil20\log(1/\delta)\rceil]$, Server $1$ samples $hash_{t}:[m]\rightarrow [\lceil4B^2\rceil]$ from a pairwise independent family of hash functions. Server $1$ broadcasts random seeds. Thus $\forall t$, all servers know $hash_{t}$.
  \State Server $1$ sets $D=\emptyset$.
  \For {$t:=1\rightarrow \lceil20\log(1/\delta)\rceil$}
    \For {$e:=1\rightarrow \lceil4B^2\rceil$}
    \State $H_{t,e}:=\{j\ |\ hash_{t}(j)=e\}$
    \State $D:=D\cup$\textbf{HeavyHitters}$(v(H_{t,e}),B,1/(16B^2))$
    \EndFor
  \EndFor
  \State report $D$
  \end{algorithmic}
\end{algorithm}

\begin{Lemma}\label{lemmaimproveheavy}
 With probability at least $1-\delta$, $\forall j\in[m]$ satisfying $z(v_j)\geq Z(v)/B$, $j$ is reported by \textbf{Z-HeavyHitters}$(v, B, \delta)$. The communication cost is $O(s\cdot \textrm{poly}(B\log(m))\log(1/\delta))$.
\end{Lemma}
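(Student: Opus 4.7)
The plan is to show that every coordinate $j$ with $z(v_j)\geq Z(v)/B$ becomes a squared-$\ell_2$ heavy hitter inside its randomly chosen bucket $H_{t,e}$ with some absolute constant probability per round, so the inner invocation of \textbf{HeavyHitters}$(v(H_{t,e}),B,1/(16B^2))$ reports it, and then amplify via independent rounds. Call $j'$ \emph{z-heavy} if $z(v_{j'})\geq Z(v)/B$, so there are at most $B$ such indices. The key structural step converts the z-heaviness hypothesis on the target $j$ into a bound on the $\ell_2^2$-mass of the z-light coordinates: property $\mathcal{P}$ forces $|v_{j'}|\leq|v_j|$ for every z-light $j'$ (since $z(v_{j'})<z(v_j)$ together with the monotonicity of $z(\cdot)$ in $|\cdot|$), and then monotonicity of $x^2/z(x)$ yields $v_{j'}^2\leq v_j^2\cdot z(v_{j'})/z(v_j)$. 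Summing and using $z(v_j)\geq Z(v)/B$ gives
\[
\sum_{j'\text{ z-light}} v_{j'}^2 \;\leq\; \frac{v_j^2}{z(v_j)}\sum_{j'}z(v_{j'}) \;\leq\; B\,v_j^2.
\]

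Fix a round $t$ and let $e=hash_t(j)$. By pairwise independence of $hash_t$ on $\lceil 4B^2\rceil$ buckets, the expected z-light contribution to $H_{t,e}$ (excluding $j$) is at most $Bv_j^2/\lceil 4B^2\rceil\leq v_j^2/(4B)$, so Markov gives that the z-light mass in $H_{t,e}$ exceeds $(B-1)v_j^2$ with probability $\leq 1/(4B(B-1))$. A second use of pairwise independence shows that the probability some other z-heavy coordinate collides with $j$ is $\leq (B-1)/\lceil 4B^2\rceil\leq 1/(4B)$. On the intersection of these events, $|v(H_{t,e})|_2^2\leq Bv_j^2$, i.e., $j$ qualifies as a $1/B$-heavy hitter of $v(H_{t,e})$, and a union bound over the $\lceil 4B^2\rceil$ buckets shows all inner \textbf{HeavyHitters} calls in round $t$ succeed with probability at least $3/4$ (each has failure parameter $1/(16B^2)$). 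Hence $j$ is captured in any given round with some absolute constant probability (say $\geq 1/2$ for $B\geq 2$). Independence across $\lceil 20\log(1/\delta)\rceil$ rounds then drives the per-target miss probability well below $\delta/B$, and a union bound over the at most $B$ z-heavy coordinates yields total failure $\leq\delta$.

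For communication, each of the $O(B^2\log(1/\delta))$ \textbf{HeavyHitters} invocations costs $O(sB\cdot\textrm{poly}(\log m)\log B)$ words, and broadcasting pairwise-independent hash seeds adds only $O(s\log m\log(1/\delta))$ words; the total is $O(s\cdot\textrm{poly}(B\log m)\log(1/\delta))$, as claimed. The main obstacle is establishing the structural inequality $\sum_{\text{z-light}}v_{j'}^2\leq Bv_j^2$ purely from the two monotonicity properties in $\mathcal{P}$ without any further quantitative handle on $z$; once this reduction from z-heaviness to $\ell_2^2$-heaviness is in place, the rest is a standard Count-Sketch-style hashing argument combined with the off-the-shelf \textbf{HeavyHitters} guarantee.
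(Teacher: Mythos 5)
Your structural reduction is the right core idea and is essentially the paper's: use the two monotonicities in $\mathcal{P}$ to convert $z(v_j)\geq Z(v)/B$ into squared-$\ell_2$ heaviness of $j$ inside its bucket. But you take an unnecessary detour. Having isolated $j$ as the unique $z$-heavy coordinate in $H_{t,e}$, the paper observes that the same two monotonicities, applied to the \emph{restricted} vector $v(H_{t,e})$, give $v_{j'}^2\leq v_j^2\cdot z(v_{j'})/z(v_j)$ for every other $j'\in H_{t,e}$, hence
\[
\frac{v_j^2}{|v(H_{t,e})|_2^2}\;\geq\;\frac{z(v_j)}{Z(v(H_{t,e}))}\;\geq\;\frac{z(v_j)}{Z(v)}\;\geq\;\frac1B,
\]
with no quantitative control of the light mass whatsoever. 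In other words, uniqueness of the $z$-heavy coordinate in a bucket \emph{already} makes it a $1/B$-heavy hitter there. Your route instead bounds the global light $\ell_2^2$-mass by $Bv_j^2$ and then applies Markov to limit how much of it falls into $j$'s bucket. That adds a second bad event you must control, and it is needed only because you threw away the information that the light mass is concentrated on coordinates with small $|v_{j'}|$. The paper's version is strictly simpler and needs only the no-collision event plus the \textbf{HeavyHitters} success event.

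There is also a genuine gap in your amplification step. You amplify per target and then union-bound over the (at most $B$) $z$-heavy targets, asserting that $\lceil 20\log(1/\delta)\rceil$ rounds drive the per-target miss probability ``well below $\delta/B$.'' With per-round success $\geq 1/2$, the per-target miss after $\lceil 20\log(1/\delta)\rceil$ rounds is $(1/2)^{20\log(1/\delta)}=\delta^{\Theta(1)}$, so the union bound over $B$ targets gives $\leq\delta$ only when $B\leq \delta^{-\Theta(1)}$. The lemma makes no such assumption, so the argument as written does not close. The paper sidesteps this entirely by performing the union bound over $z$-heavy pairs and over buckets \emph{within} each round (no two $z$-heavy collide with probability $\geq 3/4$; all $\lceil 4B^2\rceil$ \textbf{HeavyHitters} calls succeed with probability $\geq 3/4$), concluding that a single round captures \emph{all} $z$-heavy coordinates with probability $\geq 1/2$, and only then amplifies over rounds. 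Reordering your argument the same way (union-bound over targets first, amplify second) repairs the gap. Your bucket-collision computation, per-round success constant, and communication accounting are otherwise fine.
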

The intuition is that any two coordinates which are both heavy in $Z(v)$ do not collide in the same bucket with constant probability. Due to property $\mathcal{P}$ of $z(\cdot)$, the ``heavy'' coordinate will be reported by \textbf{HeavyHitters}.
\begin{proof}
Because $hash_t$ is from a pairwise independent family, for fixed $t\in[\lceil20\log(1/\delta)\rceil],j,j'\in[m],j\not=j'$, the probability that $hash_t(j)=hash_t(j')$ is $(4B^2)^{-1}$. According to the union bound, for a fixed $t$, the probability that any two different coordinates $j,j'\in[m]$ with $z(v_j),z(v_{j'})\geq Z(v)/B$ do not collide is at least $3/4$.

\textbf{Claim:} For fixed $t,e$, suppose $H_{t,e}$ has at most one coordinate $j$ such that $z(v_j)\geq Z(v)/B$. $j$ is reported by \textbf{HeavyHitters}$(v(H_{t,e}),B,1/(16B^2))$ with probability at least $1-1/(16B^2)$.

If $j$ is the unique coordinate in the set $H_{t,e}$ such that $z(v_j)\geq Z(v)/B$, then we have $(v_j)^2/z(v_j)\geq|v(H_{t,e})|_2^2/Z(v(H_{t,e}))$ due to the property $\mathcal{P}$ of function $z(\cdot)$. Because $Z(v)\geq Z(v(H_{t,e}))$, we have $(v_j)^2/|v|_2^2\geq z(v_j)/Z(v) \geq 1/B$. When \textbf{HeavyHitters}$(v(H_{t,e}),B,1/(16B^2))$ succeeds, $j$ is reported. Thus the \textbf{claim} is true. Due to the union bound, for a fixed $t$, the probability that all invocations of \textbf{HeavyHitters} succeed is at least $3/4$.
Therefore, for a fixed round $t$, the probability that all the coordinates $j$ such that $z(v_j)\geq Z(v)/B$ are added in to $D$ is at least $1/2$. Due to a Chernoff bound, the probability that there is a round $t\in[\lceil20\log(1/\delta)\rceil]$ such that all the coordinates $j$ with $z(v_j)\geq Z(v)/B$ are added into $D$, is at least $1-\delta$.
\end{proof}

\subsection{Estimation of $|S_i(a)|$ and $Z(a)$} \label{estimationofz}
  Set $T=\lceil C\log(l)/\varepsilon+1\rceil$. Here, $C$ is a constant defined in Theorem \ref{theoremsampler}. If $|S_i(a)|(1+\varepsilon)^i\geq \varepsilon Z(a)/(40T)$, we say $S_i(a)$ \textit{contributes}. Define $S_i(a)$ to be \textit{considering} when $(1+\varepsilon)^{i+1}\in[Z(a)/l^C,(1+\varepsilon)Z(a)]$. Due to $T/\varepsilon=o(l)$, for a sufficiently large constant $C$, a contributing $S_i(a)$ must be considering. Define $Z^{NC}(a)$ to be the sum of all $z(a_j)$ satisfying that $j$ belongs to a non-contributed $S_i(a)$. $Z^{NC}(a)$ is bounded as:
$$Z^{NC}(a)<l\cdot Z(a)/l^C+\sum_{\text{consid.,noncontr. }S_i(a)}|S_i(a)|(1+\varepsilon)^{i+1}$$
 Because $l^{1-C}<\varepsilon/2$ and the number of considering classes is at most $T$, $Z^{NC}(a)<\varepsilon Z(a)$. This implies that if $|S_i(a)|$ can be estimated for all contributing $S_i(a)$, we can get a $(1+\Theta(\varepsilon))$ approximation to $Z(a)$.
\begin{algorithm}
\caption{\textbf{Z-estimator}}\label{estimator}
  \begin{algorithmic}[1]
  \State \textbf{Input:}  $\{a^t\in\mathbb{R}^l\}_{t=1}^s$; $\varepsilon>0$.
  \State \textbf{Output:} $\hat{Z},\hat{s}_{-\infty},...,\hat{s}_{+\infty}$,$List$,$g(\cdot)$
  \State $B=40\varepsilon^{-4}T^3\log(l)$, $W=(5120C^2T^2\varepsilon^{-3}\log(l))^2$
  \State Initialize: $\hat{Z}=0,\hat{s}_{-\infty}=0,...,\hat{s}_{+\infty}=0$
  \State $D:=$\textbf{Z-HeavyHitters}$(a,B,l^{-20C})$
  \State $\forall j\in D$, server $1$ asks $a^t_j$ for $t\in[s]$ to compute $a_j$ and makes $\hat{s}_i=|S_i(a)\cap D|$.
  \State Server $1$ samples $g:[\lceil C\varepsilon^{-1}l \rceil]\rightarrow[\lceil C\varepsilon^{-1}l \rceil]$ from a $(20C\log(\varepsilon^{-1}l))$-wise independent family of hash functions. $\forall j\in[\lceil \log(C\varepsilon^{-1}l)] \rceil,e\in[\lceil C\log(l)\rceil]$, $h_{j,e}:[l]\rightarrow[W]$ is independently sampled from a set of pairwise independent hash functions. Server $1$ broadcasts random seeds.
  \For {$j\in[\lceil \log(C\varepsilon^{-1}l) \rceil],e\in[\lceil C\log(l)\rceil],w\in[W]$}
  \State $\mathcal{S}_j:=\{i\ |\ i\in[l],g(i)\leq 2^{-j}\lceil C\varepsilon^{-1}l\rceil\}$
  \State $\mathcal{S}_{j,e,w}:=\{i\ |\ i\in\mathcal{S}_j,h_{j,e}(i)=w\}, D_{j,e,w}:=\text{\bf{Z-HeavyHitters}}(a(\mathcal{S}_{j,e,w}),B,l^{-20C})$
  \State $D_j:=\bigcup_{e,w}D_{j,e,w}$. Server $1$ communicates with other servers to compute $a_p$ for all $p\in D_j$.
  \State $\forall i\in\mathbb{Z}, 16C^2\varepsilon^{-2}\log(l)>|S_i(a)\cap D_j|\geq 4C^2\varepsilon^{-2}\log(l)$, $\hat{s}_i:=\max(\hat{s}_i,2^j|S_i(a)\cap D_j|)$
  \EndFor
  \State Report $\hat{Z}:=\sum_i\hat{s}_i(1+\varepsilon)^i$, $\hat{s}_{-\infty},...,\hat{s}_{+\infty}$, $List:=D\cup\bigcup_j D_j$ and $g(\cdot)$.
  \end{algorithmic}
\end{algorithm}

In Algorithm \ref{estimator}, we present a protocol \textbf{Z-estimator} which outputs $\hat{Z}$ as an estimation of $Z(a)$ and $\hat{s}_i$ as an estimation of $|S_i(a)|$. The output $List$ and $g(\cdot)$ will be used in Algorithm \ref{finalsampler} to output the final coordinate. For convenience to analyze the algorithm, we define $U(a)=\bigcup_{nonconsidering} S_i(a)$.
\begin{Lemma}[Proposition 3.1 in \cite{monemizadeh20101}]\label{events}
With probability at least $1-l^{-\Theta(C)}$, the following event $\mathcal{E}$ happens:
\begin{enumerate}
\item $\forall R\in\{S_i(a)\ |\ S_i(a)\text{ is considering}\}\cup\{U(a)\}, \forall j\in[\lceil \log(C\varepsilon^{-1}l) \rceil]$, $|R\cap \mathcal{S}_j|\leq 2\cdot2^{-j}|S_i(a)|+C^2\log(l)$.
\item If $E(R\cap\mathcal{S}_j)\geq C^2\varepsilon^{-2}\log(l)$, $2^j|R\cap\mathcal{S}_j|=(1\pm\varepsilon)|R|$.
\item All the invocations of \textbf{Z-HeavyHitters} succeed.
\end{enumerate}
\end{Lemma}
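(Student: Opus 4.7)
My plan is to establish each of the three items of the lemma separately and then combine them via a union bound. The underlying randomness consists of the hash functions $g$ and $\{h_{j,e}\}$, together with the internal randomness used by each call to \textbf{Z-HeavyHitters}; these are drawn independently, so it suffices to bound the failure probability of each event by $l^{-\Theta(C)}$.

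Item~(3) is the easiest. Algorithm~\ref{estimator} invokes \textbf{Z-HeavyHitters} once for $D$ and once for each triple $(j,e,w)$, so the total number of calls is at most $1+\lceil\log(C\varepsilon^{-1}l)\rceil\cdot\lceil C\log l\rceil\cdot W$, which is polynomial in $l$. Each call is run with failure parameter $l^{-20C}$, so a union bound yields total failure probability $l^{-\Theta(C)}$.

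For items~(1) and~(2) I would apply a limited-independence Chernoff inequality: if $X=\sum_{i\in R}Y_i$ is a sum of $\{0,1\}$-valued variables that are $k$-wise independent with $\mu=E[X]$, then for $k=\Theta(C\log l)$ one has $\Pr[|X-\mu|>t]\le l^{-\Theta(C)}$ whenever $t\ge\max(\mu,C^2\log l)$ in the additive regime, and whenever $t=\varepsilon\mu$ with $\mu\ge C^2\varepsilon^{-2}\log l$ in the multiplicative regime. I apply this with $Y_i=\mathbf{1}[g(i)\le 2^{-j}\lceil C\varepsilon^{-1}l\rceil]$ for $i\in R$, whose mean is $\mu=2^{-j}|R|$ up to a rounding factor. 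The additive version gives $|R\cap\mathcal{S}_j|\le 2\cdot 2^{-j}|R|+C^2\log l$, which is exactly item~(1) (reading $|R|$ in place of the presumably-intended $|S_i(a)|$ on the right-hand side), and the multiplicative version rearranges to $2^j|R\cap\mathcal{S}_j|=(1\pm\varepsilon)|R|$, which is item~(2). Since $g$ is $(20C\log(\varepsilon^{-1}l))$-wise independent, the required degree of independence is available. A final union bound over the $O(T)$ choices of $R$ (the considering classes plus $U(a)$) and the $O(\log(\varepsilon^{-1}l))$ values of $j$ costs only a polylog factor, preserving the $1-l^{-\Theta(C)}$ success probability.

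The main obstacle, I expect, is choosing the additive slack $C^2\log l$ in item~(1) so that the single inequality covers both the regime where $\mu$ is small and the regime where $\mu$ is large: the crossover point is precisely where the limited-independence moment bound transitions from giving a useful multiplicative tail to requiring an additive one, so the constants relating $k$, $C$, and the target failure probability $l^{-\Theta(C)}$ have to be matched carefully. A secondary subtlety is that $R=U(a)$ may contain up to $l$ coordinates even though its total $z$-contribution is at most $\varepsilon Z(a)$; however, the concentration argument depends only on $|R|$ and not on $\sum_{i\in R}z(a_i)$, so this distinction only affects how the bound is later used (via Algorithm~\ref{estimator}'s estimate of $Z(a)$), not the proof of the lemma itself.
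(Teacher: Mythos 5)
The paper does not actually prove this lemma; it states it as ``Proposition~3.1 in \cite{monemizadeh20101}'' and imports it wholesale, so there is no internal proof to compare against. Your reconstruction follows the standard and expected route for this kind of subsampling concentration claim, and it is essentially the argument one finds in the cited work: a union bound over invocations for item~(3), and limited-independence Chernoff-type tails on the indicators $Y_i=\mathbf{1}[g(i)\le 2^{-j}\lceil C\varepsilon^{-1}l\rceil]$ for items~(1) and~(2), exploiting the $(20C\log(\varepsilon^{-1}l))$-wise independence of $g$. You also correctly flag the typo in item~(1): the right-hand side should read $|R|$ rather than $|S_i(a)|$, since $R$ ranges over a set that includes $U(a)$, not just a single $S_i(a)$.

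Two points worth tightening if you wanted a fully rigorous write-up. First, the precise form of the $k$-wise independence tail bound matters: the moment-method bound for $k$-wise independent $\{0,1\}$ sums gives, roughly, $\Pr[|X-\mu|>t]\le O\bigl((k\mu/t^2)^{k/2}\bigr)$ when $t\ge k\mu$-ish; with $k=\Theta(C\log l)$ you need to verify that both the additive regime ($t=C^2\log l$, $\mu$ small) and the multiplicative regime ($t=\varepsilon\mu$, $\mu\ge C^2\varepsilon^{-2}\log l$) drive this below $l^{-\Theta(C)}$, and that the constants line up so both are absorbed by the single constant $C^2$. You correctly identify this crossover as the delicate part. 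Second, the randomness of $g$ is used simultaneously across all levels $j$, and $\mathcal{S}_j\supseteq\mathcal{S}_{j+1}$ are nested, but since you handle each $(R,j)$ pair via a union bound this nesting does not cause any issue. Overall the proposal is sound and faithful to the intended argument.
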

\begin{Lemma} \label{contri}
When $\mathcal{E}$ happens, if $S_i(a)$ contributes, with probability $1-l^{-\Theta(C)}$ the following happens:
\begin{enumerate}
 \item $\hat{s}_i\geq(1-\varepsilon)|S_i(a)|$.
 \item $S_i(a)\subseteq D$ or $\exists j\in[\lceil \log(C\varepsilon^{-1}l) \rceil],S_i(a)\cap\mathcal{S}_j=S_i(a)\cap D_j\not=\emptyset$.
\end{enumerate}
\end{Lemma}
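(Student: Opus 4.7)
The plan is to split on $|S_i(a)|$. Small contributing classes will be caught entirely by the top-level call $\textbf{Z-HeavyHitters}(a,B,l^{-20C})$ that builds $D$, while large ones will be witnessed via a subsampled call at some suitably chosen level $j^\star$.

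First I would handle $|S_i(a)|\leq \Theta(C^2\varepsilon^{-2}\log(l))$. Because $S_i(a)$ contributes, every $p\in S_i(a)$ satisfies $z(a_p)\geq (1+\varepsilon)^i\geq \varepsilon Z(a)/(40T|S_i(a)|)$, and with $B=40\varepsilon^{-4}T^3\log(l)$ this exceeds $Z(a)/B$ in this regime by a wide margin. Event $\mathcal{E}$ guarantees the top-level call succeeds, so $S_i(a)\subseteq D$, giving condition 2 immediately and $\hat{s}_i=|S_i(a)\cap D|=|S_i(a)|$ for condition 1.

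For $|S_i(a)|$ above that threshold I would pick $j^\star\in[\lceil\log(C\varepsilon^{-1}l)\rceil]$ so that the expectation $2^{-j^\star}|S_i(a)|$ lies strictly inside the accepted window $[4C^2\varepsilon^{-2}\log(l),16C^2\varepsilon^{-2}\log(l))$, with enough slack for $(1\pm\varepsilon)$-concentration; such a $j^\star$ exists because $|S_i(a)|\leq l$. Item 2 of Lemma~\ref{events} then gives $|S_i(a)\cap\mathcal{S}_{j^\star}|=(1\pm\varepsilon)\cdot 2^{-j^\star}|S_i(a)|$, which lands in the algorithm's acceptance window. To obtain condition 2 I must show $S_i(a)\cap\mathcal{S}_{j^\star}\subseteq D_{j^\star}$. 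For each $p\in S_i(a)\cap\mathcal{S}_{j^\star}$, the large value $W=(5120C^2T^2\varepsilon^{-3}\log(l))^2$ and the pairwise independence of $h_{j^\star,e}$ ensure, for some $e$, that no other element of $S_i(a)\cap\mathcal{S}_{j^\star}$ lands in $p$'s bucket $w$. In that bucket I would upper bound $Z(a(\mathcal{S}_{j^\star,e,w}))$ by a Markov bound on its expectation $\leq Z(a(\mathcal{S}_{j^\star}))/W\leq Z(a)/W$; combined with the contribution lower bound $z(a_p)\geq \varepsilon Z(a)/(40T|S_i(a)|)$ and the generous choices of $B,W$, this yields $z(a_p)\geq Z(a(\mathcal{S}_{j^\star,e,w}))/B$. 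The success of $\textbf{Z-HeavyHitters}(a(\mathcal{S}_{j^\star,e,w}),B,l^{-20C})$ under $\mathcal{E}$ then places $p\in D_{j^\star,e,w}\subseteq D_{j^\star}$. The reverse inclusion $S_i(a)\cap D_{j^\star}\subseteq S_i(a)\cap\mathcal{S}_{j^\star}$ is immediate from the construction, so the equality in condition 2 holds and it is nonempty. Finally, the update rule in Algorithm~\ref{estimator} yields $\hat{s}_i\geq 2^{j^\star}|S_i(a)\cap D_{j^\star}|=2^{j^\star}|S_i(a)\cap\mathcal{S}_{j^\star}|\geq (1-\varepsilon)|S_i(a)|$.

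The main obstacle is the Markov step: converting the expected bucket mass into a per-bucket upper bound strong enough to certify that $p$ is heavy in its bucket, uniformly over the $O(C^2\varepsilon^{-2}\log(l))$ buckets that matter, with failure probability absorbed into the $l^{-\Theta(C)}$ budget. The rest is bookkeeping: checking that the parameter choices of $B$ and $W$ leave the necessary orders of magnitude of slack, and that $j^\star$ simultaneously lies in the concentration regime of Lemma~\ref{events} and in the algorithm's acceptance window.
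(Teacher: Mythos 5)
Your overall outline matches the paper's: small contributing classes land in $D$ directly, large ones are isolated at a level $j^\star$ chosen so that the expected survivor count falls in the acceptance window, and then Lemma~\ref{events} gives concentration. Up to that point the proposal is sound.

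The gap is in how you certify that $p\in S_i(a)\cap\mathcal{S}_{j^\star}$ is heavy in its hash bucket. You only isolate $p$ from the other elements of $S_i(a)\cap\mathcal{S}_{j^\star}$, and then try to kill everything else with a Markov bound $Z(a(\mathcal{S}_{j^\star,e,w}))\lesssim Z(a)/W$. That bound is not strong enough. The weight of a single element can be as small as $z(a_p)\approx(1+\varepsilon)^i\geq \varepsilon Z(a)/(40T|S_i(a)|)$, and when $|S_i(a)|$ is large (up to $l$) this is of order $Z(a)/(Tl/\varepsilon)$, whereas $W$ is only $\mathrm{poly}(T,\varepsilon^{-1},\log l)$ and independent of $l$. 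So $Z(a)/W$ can dwarf $z(a_p)$ even with a constant-factor Markov slack, and the requirement $z(a_p)\geq Z(a(\mathcal{S}_{j^\star,e,w}))/B$ would fail. Concretely, the danger is a heavier surviving coordinate from a class $i'>i$ colliding into $p$'s bucket; your isolation step does nothing about those, and Markov alone cannot rule them out because a single heavy coordinate contributes its full mass, not an averaged $1/W$ fraction.

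The paper instead defines the non-collision event $\mathcal{F}$ over \emph{all} surviving coordinates in classes $i'\geq i$, i.e.\ over $\bigcup_{i'\geq i}S_{i'}(a)\cap\mathcal{S}_{j^\star}$. Because coordinates at level $\geq i$ are individually at least as heavy as those in $S_i(a)$ and $S_i(a)$ contributes, their total survivor count in $\mathcal{S}_{j^\star}$ is bounded by $O(C^2\varepsilon^{-3}T^2\log l)$ independently of $l$; this is exactly what makes the pairwise-collision probability against $W$ a constant. Conditioned on $\mathcal{F}$, $p$'s bucket can only contain $p$ and coordinates from classes $i'<i$, and for those the paper gives a \emph{deterministic} bound (via $\mathcal{E}$) of the form $\sum_{i'<i}|S_{i'}(a)\cap\mathcal{S}_{j^\star}|(1+\varepsilon)^{i'}\lesssim C^2\varepsilon^{-3}T^2\log(l)\,(1+\varepsilon)^{i}$, which is comparable to $z(a_p)$ and hence compatible with the threshold $B$. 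So the essential idea you are missing is that the isolation must extend to every class at or above level $i$, with the light classes below handled by a size bound rather than a Markov bound on their mass; without that, the argument does not go through for contributing classes with $|S_i(a)|\gg \mathrm{poly}(T,\varepsilon^{-1},\log l)$.
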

The intuition of Lemma \ref{contri} is that if $S_i(a)$ contributes, there are two cases: if $(1+\varepsilon)^i$ is very large, it will be reported in $D$. Otherwise, there will be a $j^*$ such that $\sum_{i'\geq i}|\mathcal{S}_{j^*}\cap S_{i'}(a)|$ is bounded. Therefore, for fixed $e$, all the elements which belong to $S_{i'\geq i}(a)$ can be hashed into different buckets. Meanwhile, $(1+\varepsilon)^i$ is heavy in $\sum_{i'<i}|\mathcal{S}_{j^*}\cap S_{i'}(a)|(1+\varepsilon)^{i'+1}$. Thus, all elements in $\mathcal{S}_{j^*}\cap S_{i}(a)$ will be reported by $D_{j^*}$. $|S_{i}(a)|$ can thus be estimated.
\begin{proof}
Assume $\mathcal{E}$ happens. Suppose $|S_i(a)|$ contributes. If $|S_i(a)|<16C^2\varepsilon^{-2}\log(l)$, due to $|S_i(a)|(1+\varepsilon)^i\geq \varepsilon Z(a)/(40T)$,
$$(1+\varepsilon)^i\geq Z(a)/(640C^2\varepsilon^{-3}T\log(l))$$
Because $B=40\varepsilon^{-4}T^3\log(l)>640C^2\varepsilon^{-3}T\log(l)$, all the elements in $S_i(a)$ will be in $D$ when line 5 is executed. $$\hat{s}_i=|S_i(a)\cap D|=|S_i(a)|$$
If $|S_i(a)|\geq16C^2\varepsilon^{-2}\log(l)$, there exists a unique $j^*$ such that $8C^2\varepsilon^{-2}\log(l)\leq2^{-j^*}|S_i(a)|<16C^2\varepsilon^{-2}\log(l)$. For $i'\geq i$,
$$|S_i(a)|(1+\varepsilon)^i\geq\varepsilon Z(a)/(40T)\geq\varepsilon |S_{i'}(a)|(1+\varepsilon)^{i'}/(40T)$$
So,
\begin{align*}
&2^{-j^*}|S_{i'}(a)|\\
&\leq 40T\varepsilon^{-1}2^{-j^*}|S_i(a)|\\
&\leq 40T\varepsilon^{-1}\cdot16C^2\varepsilon^{-2}\log(l)\\
&=640C^2\varepsilon^{-3}T\log(l)
\end{align*}
Due to $\mathcal{E}$, $|S_{i'}(a)\cap\mathcal{S}_{j^*}|\leq 1280C^2\varepsilon^{-3}T\log(l)+C^2\log(l)\leq2560C^2\varepsilon^{-3}T\log(l)$. Because the number of considering classes is bounded by $T$,
$$\sum_{i'\geq i}|S_{i'}(a)\cap\mathcal{S}_{j^*}|\leq 2560C^2\varepsilon^{-3}T^2\log(l).$$
For a fixed $e$, define the event $\mathcal{F}:$ $\forall p,q\in \bigcup_{i'\geq i} S_i(a),p\not=q$, $h_{j^*,e}(p)\not=h_{j^*,e}(q)$. Because $h_{j^*,e}$ is pairwise independent, $Pr(\neg\mathcal{F})$ is bounded by $(2560C^2\varepsilon^{-3}T^2\log(l))^2/W=1/4$ by a union bound.

For $i'<i$, because $S_i(a)$ contributes, we have:
$$(1+\varepsilon)^i|S_i(a)|\geq \varepsilon(1+\varepsilon)^{i'}|S_{i'}(a)|/(40T)$$
Then, $$|S_{i'}(a)|\leq 40T\varepsilon^{-1}|S_i(a)|(1+\varepsilon)^{i-i'}$$
Thus,
\begin{align*}
&|S_{i'}(a)\cap\mathcal{S}_{j^*}|(1+\varepsilon)^{i'} \\
& \leq 2\cdot40T\varepsilon^{-1}2^{-j^*}|S_i(a)|(1+\varepsilon)^{i}+C^2\log(l)(1+\varepsilon)^{i'}\\
& \leq 4\cdot40T\varepsilon^{-1}\cdot 16C^2\varepsilon^{-2}\log(l)(1+\varepsilon)^{i}\\
& = 2560C^2\varepsilon^{-3}T\log(l) (1+\varepsilon)^i
\end{align*}
Therefore, conditioned on $\mathcal{E},\mathcal{F}$, for fixed $e$, $\forall p\in S_i(a)\cap\mathcal{S}_{j^*,e,w}$ we have
\begin{align*}
& Z(a(\mathcal{S}_{j^*,e,w}))\\
&\leq \sum_{i'<i} 2560C^2\varepsilon^{-3}T\log(l)(1+\varepsilon)^i+z(a_p)\\
&\leq 5120C^2\varepsilon^{-3}T^2\log(l)(1+\varepsilon)^{i}
\end{align*}
Because $z(a_p)\geq Z(a(\mathcal{S}_{j^*,e,w}))/B$, $S_i(a)\cap\mathcal{S}_{j^*}=S_i(a)\cap \bigcup_w D_{j^*,e,w}$.

Since $e$ is iterated from $1$ to $\lceil C\log(l)\rceil$ and $D_{j^*}=\bigcup_{e,w}D_{j^*,e,w}$, with probability $1-l^{\Theta(-C)}$, $S_i(a)\cap\mathcal{S}_{j^*}=S_i(a)\cap D_{j^*}$. Notice that $(1-\varepsilon)8C^2\varepsilon^{-2}\log(l)\geq4C^2\varepsilon^{-2}\log(l)$. Thus, when $j=j^*$, $\hat{s}_{i}$ will be set to be $(1\pm\varepsilon)|S_i(a)|$ with probability $1-l^{\Theta(-C)}$.
\end{proof}

\begin{Theorem}\label{Zestimator}
With probability at least $1-l^{-\Theta(C)}$, Algorithm \ref{estimator} outputs $\hat{Z}=(1\pm\Theta(\varepsilon))Z(a)$ and $\forall i\in\mathbb{Z},\hat{s}_i\leq (1+\varepsilon)|S_i(a)|$.
\end{Theorem}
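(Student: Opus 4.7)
The plan is to condition on the event $\mathcal{E}$ of Lemma \ref{events}, which fails with probability only $l^{-\Theta(C)}$, and then derive both the upper bound on each $\hat{s}_i$ and the two-sided estimate of $\hat{Z}$ deterministically from $\mathcal{E}$ together with a single union bound. Since there are at most $T=O(\log(l)/\varepsilon)$ considering classes, applying Lemma \ref{contri} simultaneously to every contributing class still fails with probability only $l^{-\Theta(C)}$, so I may assume henceforth that $\hat{s}_i\ge (1-\varepsilon)|S_i(a)|$ for every contributing $S_i(a)$ and that every \textbf{Z-HeavyHitters} invocation has succeeded.

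Next I would establish the uniform upper bound $\hat{s}_i\le (1+\varepsilon)|S_i(a)|$. The initial assignment $\hat{s}_i=|S_i(a)\cap D|$ is trivially at most $|S_i(a)|$, so the only threat is an update of the form $2^j|S_i(a)\cap D_j|$, which fires only when $|S_i(a)\cap D_j|\ge 4C^2\varepsilon^{-2}\log(l)$. Since each $D_{j,e,w}$ is reported from $a(\mathcal{S}_{j,e,w})$, we have $D_j\subseteq \mathcal{S}_j$, so the trigger forces $|S_i(a)\cap \mathcal{S}_j|\ge 4C^2\varepsilon^{-2}\log(l)$. If the expectation $E[|S_i(a)\cap \mathcal{S}_j|]=2^{-j}|S_i(a)|$ were below $C^2\varepsilon^{-2}\log(l)$, item 1 of Lemma \ref{events} would give $|S_i(a)\cap \mathcal{S}_j|\le 2C^2\varepsilon^{-2}\log(l)+C^2\log(l)<4C^2\varepsilon^{-2}\log(l)$, contradicting the trigger. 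Hence item 2 of Lemma \ref{events} applies, and $\hat{s}_i\le 2^j|S_i(a)\cap \mathcal{S}_j|\le (1+\varepsilon)|S_i(a)|$.

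Combining the per-class bounds with the interval inclusion $z(a_p)\in[(1+\varepsilon)^i,(1+\varepsilon)^{i+1})$ for $p\in S_i(a)$ gives the two-sided estimate of $\hat{Z}$. The upper bound is immediate: $\hat{Z}=\sum_i \hat{s}_i(1+\varepsilon)^i\le (1+\varepsilon)\sum_i |S_i(a)|(1+\varepsilon)^i\le (1+\varepsilon)Z(a)$. For the lower bound I restrict the sum to contributing indices and invoke the bound $Z^{NC}(a)<\varepsilon Z(a)$ already derived just before Algorithm \ref{estimator}: $\hat{Z}\ge (1-\varepsilon)\sum_{\text{contr.}\,i}|S_i(a)|(1+\varepsilon)^i\ge \frac{1-\varepsilon}{1+\varepsilon}\bigl(Z(a)-Z^{NC}(a)\bigr)\ge (1-\Theta(\varepsilon))Z(a)$.

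The step I expect to be most delicate is the uniform upper bound on $\hat{s}_i$, because a class that is very small, or non-contributing, might in principle accidentally trigger the $2^j|S_i(a)\cap D_j|$ update and produce a gross overestimate. The argument above resolves this by noting that the update threshold $4C^2\varepsilon^{-2}\log(l)$ was chosen precisely to dominate the additive slack $C^2\log(l)$ appearing in item 1 of Lemma \ref{events}, which forces any triggered update into the regime governed by the clean multiplicative statement of item 2. Once this observation is made, the remainder is bookkeeping over the polylogarithmically many indices that can ever receive a nonzero $\hat{s}_i$.
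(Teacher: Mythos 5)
Your proposal is correct and follows essentially the same route as the paper: condition on the event $\mathcal{E}$ of Lemma \ref{events}, bound each $\hat{s}_i$ from above by inspecting both assignment points in Algorithm \ref{estimator}, and derive the two-sided bound on $\hat{Z}$ from Lemma \ref{contri} together with the pre-computed bound $Z^{NC}(a)<\varepsilon Z(a)$. The one place you go beyond the paper's terse argument is worth keeping: the paper simply asserts $2^j|\mathcal{S}_j\cap S_i(a)|=(1\pm\varepsilon)|S_i(a)|$ via item 2 of Lemma \ref{events} without checking the hypothesis $E[|S_i(a)\cap\mathcal{S}_j|]\geq C^2\varepsilon^{-2}\log(l)$, whereas you show that the line-17 trigger threshold $4C^2\varepsilon^{-2}\log(l)$ forces this hypothesis by contradiction with item 1's bound $|S_i(a)\cap\mathcal{S}_j|\leq 2\cdot 2^{-j}|S_i(a)|+C^2\log(l)$; this is the right justification and matches why the constants in Algorithm \ref{estimator} were chosen as they are.
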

\begin{proof}
Suppose $\mathcal{E}$ happens. If $\hat{s}_i$ is assigned by line 6, $\hat{s}_i=|D\cap S_i(a)|\leq|S_i(a)|$. Otherwise, $\hat{s}_i$ is assigned by line 17. Due to Lemma \ref{events}, $2^j|D_j\cap S_i(a)|\leq2^j|\mathcal{S}_j\cap S_i(a)|=(1\pm\varepsilon)|S_i(a)|$. Thus, $\forall i\in\mathbb{Z},\hat{s}_i\leq (1+\varepsilon)|S_i(a)|$. Because of Lemma \ref{contri} and the bound of $Z^{NC}(a)$, $\hat{Z}=(1\pm\Theta(\varepsilon))Z(a)$.
\end{proof}
Actually, Algorithm \ref{estimator} can be implemented with two rounds of communication. In the first round, all the servers together compute $List$. In the second round, server $1$ checks each element in $List$ and estimates $\hat{s}_i$.

\subsection{The sampler} \label{thesampler}
Since $\hat{Z}(a)$ is a $(1\pm\varepsilon)$ approximation to $Z(a)$, the coordinate injection technique of \cite{monemizadeh20101} can be applied. Define considering $S_i(a)$ to be \textit{growing} when $(1+\varepsilon)^i\leq \hat{Z}(a)/(5\varepsilon^{-4}T^3\log(l))$. If $S_i(a)$ is growing, then server $1$ appends $\lceil \varepsilon \hat{Z}(a)/(5T(1+\varepsilon)^i)\rceil$ coordinates with value $z^{-1}((1+\varepsilon)^i)$ to vector $a^1$ and other servers append a consistent number of $0$s to their own vectors. Since $z(\cdot)$ is an increasing function, if $z^{-1}((1+\varepsilon)^i)$ does not exist, $S_i(a)$ must be empty, we can ignore this class. Thus server $t$ can obtain a vector $a'^t$ and global $a'=\sum_{t=1}^s a'^t$. Similar to Lemma 3.2,3.3,3.4 of \cite{monemizadeh20101}, $Z(a)\leq Z(a')\leq (1+\varepsilon)Z(a)$ and $\forall \text{growing }S_i(a)$, $S_i(a')$ is contributed. Furthermore, the dimension of $a'$ is $l'=poly(l)$. We get the final sampler in Algorithm \ref{finalsampler}.
\begin{algorithm}
\caption{\textbf{Z-sampler}}\label{finalsampler}
  \begin{algorithmic}[1]
  \State \textbf{Input:}  $\{a'^t\in\mathbb{R}^{l'}\}_{t=1}^s$; $\varepsilon>0$
  \State \textbf{Output:} coordinate $p$ of $a'$
  \State $\hat{Z},\hat{s}_{-\infty},...,\hat{s}_{+\infty},List,g:=$\textbf{Z-estimator}$(a',\varepsilon)$
  \State Choose $i^*\in\mathbb{Z}$ with probability $\hat{s}_{i^*}(1+\varepsilon)^{i^*}/\hat{Z}$
  \State Choose $p$ satisfying $g(p)=\min_{q\in List\cap S_{i^*}(a')}g(q)$
  \State if $p$ is not an injected coordinate, output $p$. Otherwise, output \textbf{FAIL}.
  \end{algorithmic}
\end{algorithm}
\begin{Lemma} \label{list}
With probability $1-l^{-\Theta(C)}$, the following happens for all considering $S_i(a')$, $\hat{s}_i=(1\pm\varepsilon)|S_i(a')|$. If $p_i$ satisfies $g(p_i)=\min_{q\in S_i(a')}g(q)$, then $p_i$ belongs to $List$.
\end{Lemma}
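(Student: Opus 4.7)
The plan is to leverage Lemma \ref{contri} and Theorem \ref{Zestimator} applied to the augmented vector $a'$, splitting considering classes into two types according to whether they are ``growing'' (in the sense defined right before Algorithm \ref{finalsampler}) or not. The crucial bridge is that, by construction of the coordinate injection, every growing considering class $S_i(a')$ is automatically \emph{contributing} in $a'$, which makes Lemma \ref{contri} directly applicable. For non-growing considering classes, i.e.\ those with $(1+\varepsilon)^i>\hat{Z}/(5\varepsilon^{-4}T^3\log l)$, every coordinate $j\in S_i(a')$ satisfies $z(a'_j)\ge(1+\varepsilon)^i\ge Z(a')/B$ because $B=40\varepsilon^{-4}T^3\log l$ beats the $5\varepsilon^{-4}T^3\log l$ threshold. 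Hence the initial call $\textbf{Z-HeavyHitters}(a',B,l^{-20C})$ on line~5 of Algorithm \ref{estimator} (which succeeds as part of event $\mathcal{E}$ in Lemma \ref{events}) places the entire class inside $D$.

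From this case split, the size-estimate portion follows immediately. In the non-growing case we get $\hat{s}_i=|S_i(a')\cap D|=|S_i(a')|$ exactly. In the growing case, the first conclusion of Lemma \ref{contri} gives $\hat{s}_i\ge(1-\varepsilon)|S_i(a')|$, and the universal upper bound $\hat{s}_i\le(1+\varepsilon)|S_i(a')|$ from Theorem \ref{Zestimator} completes the $(1\pm\varepsilon)$ approximation.

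For the $List$-membership of $p_i=\arg\min_{q\in S_i(a')}g(q)$, I would argue as follows. In the non-growing case $S_i(a')\subseteq D\subseteq List$, so $p_i\in List$ trivially. In the growing case, the second conclusion of Lemma \ref{contri} yields either $S_i(a')\subseteq D$ (done) or an index $j^\ast\in[\lceil\log(C\varepsilon^{-1}l)\rceil]$ with $S_i(a')\cap\mathcal{S}_{j^\ast}=S_i(a')\cap D_{j^\ast}\neq\emptyset$. Pick any witness $q$ in that intersection. Since $\mathcal{S}_{j^\ast}=\{r:g(r)\le 2^{-j^\ast}\lceil C\varepsilon^{-1}l\rceil\}$ is a downward-closed level set of $g$ and $g(p_i)\le g(q)\le 2^{-j^\ast}\lceil C\varepsilon^{-1}l\rceil$, we obtain $p_i\in\mathcal{S}_{j^\ast}$, hence $p_i\in S_i(a')\cap\mathcal{S}_{j^\ast}=S_i(a')\cap D_{j^\ast}\subseteq D_{j^\ast}\subseteq List$.

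Finally, since the number of considering classes is at most $T=O(\log(l)/\varepsilon)$, a union bound over the failure events of Lemma \ref{contri} and Theorem \ref{Zestimator} (each of size $l^{-\Theta(C)}$) keeps the total failure probability at $l^{-\Theta(C)}$. The only subtle point, which I expect to be the main obstacle to state crisply, is verifying that the threshold $B$ is large enough that non-growing considering classes are fully caught by the initial \textbf{Z-HeavyHitters} call; the comparison of the constants $5$ and $40$ in front of $\varepsilon^{-4}T^3\log l$ resolves it cleanly.
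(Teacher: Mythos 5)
Your proof is correct and follows essentially the same route as the paper's: split considering classes into growing and non-growing, invoke Lemma~\ref{contri} (together with the fact that injection makes growing classes contributing in $a'$) for the first type, and use the comparison of the injection threshold $5\varepsilon^{-4}T^3\log l$ with $B=40\varepsilon^{-4}T^3\log l$ to show non-growing classes land entirely in $D$ via the initial \textbf{Z-HeavyHitters} call. Your unpacking of why point~2 of Lemma~\ref{contri} gives $p_i\in List$ — that $\mathcal{S}_{j^\ast}$ is a downward-closed level set of $g$ so the $g$-minimizer of $S_i(a')$ must fall inside it once the intersection is nonempty — makes explicit a step the paper leaves terse, but it is the same argument.
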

\begin{proof}
Suppose \textbf{Z-estimator} succeeds. If $S_i(a)$ is growing, then $S_i(a')$ contributes. Thus, due to Lemma \ref{contri} and Theorem \ref{Zestimator}, $\hat{s}_i=(1\pm\varepsilon)|S_i(a')|$. Point 2 of Lemma \ref{contri} also implies that $p_i\in List$. If $S_i(a)$ is not growing, $(1+\varepsilon)^i>\hat{Z}(a)/(5\varepsilon^{-4}T^3\log(l))>Z(a')/B$. $D\cap S_{i}(a')=S_{i}(a')$. Therefore, $p_i\in List$ and $\hat{s}_i=|S_i(a')|$
\end{proof}
Due to Lemma \ref{list}, in line 5 of Algorithm \ref{finalsampler}, $p$ satisfies that $g(p)=\min_{q\in S_{i^*}(a')}g(q)$.
\begin{Lemma}[Theorem 3.3 in \cite{monemizadeh20101}]\label{uniform} $$Pr(g(p)=\min_{q\in S_{i}(a')}g(q))=(1\pm \Theta(\varepsilon))/|S_{i}(a')|$$
\end{Lemma}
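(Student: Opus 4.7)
The plan is to show that $g$, being $k$-wise independent with $k = 20C\log(\varepsilon^{-1}l)$, behaves like a $\Theta(\varepsilon)$-approximate min-wise independent family on sets of polynomial size; this is exactly the content of Theorem~3.3 of \cite{monemizadeh20101}. I would begin by conditioning on the value $g(p) = j$ for each $j$ in the range $R = \lceil C\varepsilon^{-1}l\rceil$, which by the law of total probability gives
\[
\Pr[g(p) = \min g(S_i(a'))] = \frac{1}{R}\sum_{j=1}^{R}\Pr\!\left[\forall q \in S_i(a')\setminus\{p\}: g(q)>j \;\Big|\; g(p)=j\right].
\]
Under full independence a symmetry argument forces this to equal $1/|S_i(a')|$ exactly, so the task reduces to controlling the deviation that arises from $g$ only being $k$-wise independent.

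Next I would expand the inner probability via inclusion--exclusion over subsets $T \subseteq S_i(a')\setminus\{p\}$ of the events $\{g(q)\leq j\}$. Because conditioning on $g(p)=j$ leaves the remaining coordinates $(k-1)$-wise independent, every term of order at most $k-1$ takes its fully-independent value $(j/R)^{|T|}$. The Bonferroni inequalities then sandwich the true probability between two truncated alternating sums whose discrepancy is bounded by the level-$k$ contribution, and this discrepancy is the only source of error relative to the fully independent calculation.

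The main obstacle is showing that this truncation error is $o(\varepsilon/|S_i(a')|)$ after summing over $j$. A naive bound $\binom{|S_i(a')|}{k-1}(j/R)^{k-1}$ is too weak when $j/R$ is close to $1$, so I would split the sum into regimes: for small $j$ the factor $(j/R)^{k-1}$ decays super-polynomially in $l$, while for large $j$, even under full independence the quantity $(1-j/R)^{|S_i(a')|-1}$ decays exponentially once $j/R \gtrsim \log|S_i(a')|/|S_i(a')|$, so the tail contributes negligibly to $\Pr[g(p)=\min]$ anyway. With $k = 20C\log(\varepsilon^{-1}l)$ and $|S_i(a')| \leq l' = \mathrm{poly}(l)$, both regimes can be driven below $l^{-\Theta(C)}$, which is much smaller than $\varepsilon/|S_i(a')|$, and the $(1\pm\Theta(\varepsilon))/|S_i(a')|$ bound follows.
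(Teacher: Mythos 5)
The paper does not prove this lemma: it is imported verbatim as Theorem~3.3 of \cite{monemizadeh20101} and used as a black box, so there is no ``paper's own proof'' to compare against. Your sketch is the correct line of attack in spirit --- it is the Indyk-style argument that $O(\log(1/\varepsilon))$-wise independence yields $\varepsilon$-approximate min-wise independence, which is indeed the ingredient underlying the cited result.

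That said, the sketch has a genuine gap in the large-$j$ regime. You write that for $j/R \gtrsim \log|S_i(a')|/|S_i(a')|$, ``even under full independence the quantity $(1-j/R)^{|S_i(a')|-1}$ decays exponentially, so the tail contributes negligibly to $\Pr[g(p)=\min]$ anyway.'' This observation about full independence is not by itself a bound on the quantity you need, which is $\Pr[\forall q\neq p:\ g(q)>j \mid g(p)=j]$ under only $(k-1)$-wise independence. Bonferroni is useless here because $\binom{n}{k}(j/R)^k$ blows up for large $j$, and you offer nothing to replace it. The standard fix is a concentration bound for sums of $k$-wise independent indicators: set $X_q=\mathbf{1}[g(q)\leq j]$, note $\mu=\mathbb{E}[\sum_q X_q]=(n\,j)/R$ is large in this regime, and then bound $\Pr[\sum_q X_q=0]\leq \Pr[|\sum_q X_q-\mu|\geq\mu]$ by a $k$-th moment Markov bound (or a Schmidt--Siegel--Srinivasan style Chernoff bound for limited independence), which with $k=\Theta(\log(\varepsilon^{-1}l))$ gives a polynomially small tail. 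Without some such argument the large-$j$ contribution is simply uncontrolled, and the truncated inclusion--exclusion alone cannot give the $(1\pm\Theta(\varepsilon))$ multiplicative statement.

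Two smaller points worth fixing: the inner event should be $g(q)\geq j$, not $g(q)>j$, since $g(p)=\min$ does not exclude ties when $g$ takes discrete values in $[\lceil C\varepsilon^{-1}l\rceil]$; and in the small-$j$ regime the quantity that must decay is $\binom{|S_i(a')|}{k}(j/R)^k\leq\bigl(e\,|S_i(a')|\,j/(Rk)\bigr)^k$, not $(j/R)^{k-1}$ alone --- the binomial coefficient is what forces $k$ to be $\Theta(\log(\varepsilon^{-1}l))$ in order to cover the whole range $j\lesssim R\log|S_i(a')|/|S_i(a')|$ where the survival probability is still $\Omega(1)$.
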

Since $\hat{s}_{i^*}=(1\pm\varepsilon)|S_{i^*}(a')|$, according to Theorem \ref{Zestimator}, Lemma \ref{list} and Lemma \ref{uniform}, \textbf{Z-sampler} samples coordinate $p$ with probability $(1\pm\Theta(\varepsilon))z(a'_p)/Z(a')\pm l^{-\Theta(C)}$. Since injected coordinates contributed at most $\Theta(\varepsilon)$ to $Z(a')$, the probability that \textbf{Z-sampler} outputs \textbf{FAIL} is at most $\Theta(\varepsilon)$. Thus, we can repeat \textbf{Z-sampler} $O(C\log(l))$ times and choose the first non-injected coordinate. The probability that \textbf{Z-sampler} outputs a non-injected coordinate at least once is $1-l^{\Theta(-C)}$. Because $Z(a')=(1\pm\varepsilon)Z(a)$, each coordinate $i$ is sampled with probability $(1\pm\Theta(\varepsilon))z(a_i)/Z(a)\pm l^{-\Theta(C)}$. By adjusting $C$ and $\varepsilon$ by a constant factor, Theorem \ref{theoremsampler}
 is shown.

\section{Applications}\label{applications}
\subsection{Gaussian random fourier features}
Gaussian RBF kernel \cite{shawe2004kernel} on two $d$-dimensional vectors $x,y$ is defined as
$$K(x,y)=\phi(x)^T\phi(y)=e^{-\frac{|x-y|_2^2}2}$$
According to the Fourier transformation of $K(x,y)$,
$$K(x,y)=\int_{\mathbb{R}^d}p(z)e^{iz^T(x-y)}dz$$
where $p(z)=(2\pi)^{-\frac{d}2}e^{-\frac{|z|_2^2}{2}}$. Suppose $z$ is a $d$-dimensional random vector with each entry drawn from $N(0,1)$, As shown in \cite{rahimi2007random},\cite{le2013fastfood}, estimating $E_{z}(e^{iz^Tx}{e^{-iz^Ty}})$ by sampling such a vector $z$ provides a good approximation to $K(x,y)$. According to \cite{rahimi2007random}, if the samples are $z_1,...,z_l$, then $\phi(x)$ can be approximated by $$\sqrt{2}(\cos(z_1^Tx+b_1),...,\cos(z_l^Tx+b_l))^T$$
 where $b_1,...,b_l\in\mathbb{R}$ are i.i.d.samples drawn from a uniform distribution on $[0,2\pi]$.

 In the distributed setting, matrix $M^i\in\mathbb{R}^{n\times m}$ is stored in server $i\in[s]$. The global matrix is computed by $M=\sum_{t=1}^s M^t$. Let each entry of $Z\in\mathbb{R}^{m\times d}$ be an i.i.d. sample drawn from $N(0,1)$ and each entry of $b\in\mathbb{R}^d$ be an i.i.d. sample drawn from uniform distribution on $[0,2\pi]$. An approximated Gaussian RBF kernel expansion of $M_i$ is $A_i$ of which
 $$A_{i,j}=\sqrt{2}\cos((M_iZ)_j+b_j)$$
 For fixed $i,j$, one observation is
 $$E(A_{i,j}^2)=E(2\cos^2((M_iZ)_j+b_j))=1.$$
Due to Hoeffding's inequality, when $d=\Theta(\log(n))$, the probability that $\forall i\in[n], |A_i|_2^2=\Theta(d)$ is high.  Thus server $1$ can obtain $O(k^2/\varepsilon^2)$ rows of $M$ via uniform sampling and generate $Z,b$ with $d=O(\log(n))$ to compute approximate PCA on these random Fourier features of $M$.

\subsection{Softmax (GM)}
The generalized mean function $GM(\cdot)$ with a parameter $p$ of $n$ positive reals $x_1,...,x_n$ is defined as: $$GM(x_1,...,x_n)=\left(\frac1n\sum_{i=1}^nx_i^p\right)^{\frac1p}$$
When $p$ is very large, $GM(\cdot)$ behaves like $\max(\cdot)$. If $p=1$, $GM(\cdot)$ is just $mean(\cdot)$. We discuss the following situation: each server $i\in[s]$ holds a local matrix $M^i\in\mathbb{R}^{n\times d}$. The global data $A$ is formed by
$$A_{i,j}=GM(|M^1_{i,j}|,...,|M^s_{i,j}|)$$
for $p\geq 1$. Since server $t$ can locally compute $A^t$ such that
$$A^t_{i,j}=\frac1s(M^t_{i,j})^p$$
the setting meets the generalized partition model with $f(x)\equiv x^{\frac1p}$. So, we can apply the sampling algorithm in \cite{jowhari2011tight,monemizadeh20101}. Because $f^2(x)$ holds the property $\mathcal{P}$, our generalized sampler also works in this scenario. Furthermore, the communication costs of our algorithm does not depend on $p$. Therefore, if we set $p=\log(nd)$, the word size of $A^t_{i,j}$ is the same of the word size of $M^t_{i,j}$ up to a factor $\log(nd)$. But for an arbitrary constant $c'\in (0,1)$, when $n,d$ are sufficient large, $$GM(|M^1_{i,j}|,...,|M^s_{i,j}|)>c'\max(|M^1_{i,j}|,...,|M^s_{i,j}|)$$
can be held. As shown in the results of lower bounds, computing relative error approximate PCA for $\max(\cdot)$ is very hard.

\subsection{M-estimators}
In applications, it is possible that some parts of the raw data are contaminated by large amount of noises. Because traditional PCA is sensitive to outliers, certain forms of robust PCA have been developed. By applying a proper function $f(\cdot)$, we can find a low rank approximation to a matrix that has had a threshold applied to each of its entries, that is, the matrix $A$ has its $(i,j)^{th}$ entry bounded by a threshold $T$. It is useful if one wants to preserve the magnitude of entries except for those that have been damaged by noise, which one wants to cap off. $\psi$-functions of some M-estimators can be used to achieve this purpose, and parts of them are listed in table \ref{sample-table}. Suppose $\psi(x)$ is one of those functions and the global data $A$ satisfies $A_{i,j}=\psi(\sum_{t=1}^s A^t_{i,j})$. Because $\psi(x)^2$ satisfies the property $\mathcal{P}$, combining with our framework and generalized sampler, it works for computing approximate PCA for such $A$. However, computing relative error approximate PCA to $A$ may be very hard. Our result shows that at least $\tilde{\Omega}(nd)$ bits are needed to get relative error when $f(\cdot)$ is $\psi$-function of Huber.
\begin{table}[h!]
\caption{$\psi$-functions of several M-estimators}
\label{sample-table}
\small{
\begin{center}
\begin{tabular}{ccc}
\multicolumn{1}{c}{ Huber}  &\multicolumn{1}{c}{ $L_1-L_2$} &\multicolumn{1}{c}{ ``Fair''}
\\ \hline \\
$ \left\{\begin{array}{ll}k\cdot sgn(x) & \text{if } |x|>k\\ x & \text{if } |x|\leq k \end{array}\right. $   & { $\frac{x}{(1+\frac{x^2}{2})^{\frac12}}$} & {$\frac{x}{1+\frac{|x|}c}$}\\
\end{tabular}
\end{center}
}
\end{table}

\section{Lower bounds for relative error algorithms}
 In this section, several lower bounds for relative error approximate PCA are obtained. Our results indicate that it is hard to compute relative error approximate PCA in many cases.
\subsection{Notations}
$e_i^{n}$ denotes a $1\times n$ binary vector whose unique non-zero entry coordinate is on coordinate $i$. $\bar{e_i}^n$ is a $1\times n$ binary vector with unique zero entry coordinated on $i$. $1^n$ denotes a $1\times n$ vector with $n$ ``$1$''. $I_n$ is an identity matrix of size $n$.
\subsection{Lower bounds}
\begin{Theorem}\label{lower1}
Alice has $A^1\in\mathbb{R}^{n\times d}$, Bob has $A^2\in\mathbb{R}^{n\times d}$. Let $A_{i,j}=f(A^1_{i,j}+A^2_{i,j})$. if $f(x)=\Omega(|x|^p)$ and $p>1$, the communication of computing rank-$k$ projection matrix $P$ with constant probability greater than $1/2$ such that $$||A-AP||_F^2\leq(1+\varepsilon)||A-[A]_k||_F^2$$
is at least $\tilde{\Omega}((1+\varepsilon)^{-\frac{2}{p}}n^{1-\frac{1}{p}}d^{1-\frac{4}{p}})$ bits.
\end{Theorem}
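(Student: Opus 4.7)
The plan is to prove the lower bound by a reduction from a standard hard two-party communication problem whose randomized one-way complexity is linear in the instance size --- most naturally an Augmented Indexing (or direct-sum Indexing) instance of total size $N := \tilde{\Theta}((1+\varepsilon)^{-2/p} n^{1-1/p} d^{1-4/p})$. Alice and Bob simulate the PCA protocol on matrices $A^1,A^2$ that embed the $N$-bit instance, and use the projection $P$ output by the protocol to decode Alice's chosen bit. Invoking the $\Omega(N)$ communication lower bound for the underlying problem then yields the claim.

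The construction is a block-based encoding. Partition $[n]\times[d]$ into $N$ disjoint ``cells'' (each of a carefully chosen shape) plus a low-magnitude ``background''. Alice encodes each of her $N$ bits by placing a value $\tau$ in the corresponding cell of $A^1$ (and $0$ otherwise), while Bob writes complementary values into $A^2$ so that the entrywise sum equals $\tau$ in every ``on'' cell and $0$ elsewhere. Because $f(x)=\Omega(|x|^p)$, each on-cell entry of $A$ then has magnitude at least $\Omega(\tau^p)$, while background entries (controlled by independently planted noise whose $\ell_2$ mass dominates the baseline $\|A-[A]_k\|_F^2$) remain of bounded size. Thus $A$ decomposes as a sparse ``signal'' plus dense ``noise,'' with the signal living in a subspace of dimension $O(k)$.

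The decoding step is the standard heavy-block argument: if $P$ fails to align with the subspace spanned by an on-cell, the residual Frobenius error grows by at least $\Omega(\tau^{2p})$. For $P$ to satisfy $\|A-AP\|_F^2\leq (1+\varepsilon)\|A-[A]_k\|_F^2$, this extra contribution must be at most $\varepsilon\,\|A-[A]_k\|_F^2$. Balancing $\tau$, the noise level, and the number of cells so that each cell is exactly at the threshold of relative-error visibility forces the cell count to scale like $N \asymp (1+\varepsilon)^{-2/p} n^{1-1/p} d^{1-4/p}$; the factor $(1+\varepsilon)^{-2/p}$ appears because a multiplicative $(1+\varepsilon)$ slack in Frobenius error translates, through the $p$-th power amplification by $f$, into only a $(1+\varepsilon)^{1/p}$ slack in the pre-$f$ signal magnitude (squared in the Frobenius norm, giving $2/p$). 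Once this threshold is attained, the top singular directions of $AP$, when read off cell by cell, recover Alice's $N$ bits with constant probability.

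The main obstacle is the quantitative perturbation step: showing that any rank-$k$ $P$ meeting the relative-error guarantee must \emph{jointly} capture essentially every on-cell subspace, with no way to ``amortize'' the error across many narrowly-missed cells. This requires a careful Frobenius-error (or Davis--Kahan style) argument that exploits the orthogonality of distinct cell supports, combined with an additive-combinatorial choice of cell shape that packs the maximum number of independent directions into $\mathbb{R}^d$ while keeping each signal entry at the prescribed magnitude. The remaining pieces --- the $\Omega(N)$ lower bound for Augmented Indexing and the standard reduction from $r$-round protocols to one-way protocols via a public-coin direct-sum argument --- are routine, so the entire difficulty concentrates in tuning the cell geometry to realize the three exponents $1-1/p$, $1-4/p$, and $-2/p$ simultaneously.
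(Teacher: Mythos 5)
Your approach differs from the paper's, which reduces from the $L_\infty$ gap problem of Bar-Yossef et al.\ (Theorem~8.1 of \cite{bar2002information}) with $m = nd$ and gap $B = \Theta((n d^4 (1+\varepsilon)^2)^{1/(2p)})$, and then uses the PCA protocol \emph{recursively} $\lceil\log_d n\rceil$ times: in each round Alice uses the top-$k$ row norms of $P$ (with a planted $B\cdot I_{k-1}$ block) to identify the single column containing the potential heavy entry, then both players refold that column into a fresh $n_r/d \times d$ matrix and recurse. This drills down to a single coordinate and finishes with a constant-bit check, costing $\tilde{O}(\log_d n)$ invocations of the PCA protocol; the $\Omega(m/B^2)$ bound on $L_\infty$ then forces $\tilde{\Omega}((1+\varepsilon)^{-2/p} n^{1-1/p} d^{1-4/p})$ per invocation.

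Your proposal has gaps that I do not think can be patched as written. First, a rank-$k$ projection $P\in\mathbb{R}^{d\times d}$ carries at most $O(kd\log(\text{precision}))$ bits of information, so it cannot possibly allow Bob to ``recover Alice's $N$ bits'' cell by cell when $N \gg kd$; any Indexing-style decoding of a superlinear (in $d$) number of bits from a single PCA call is information-theoretically impossible. The paper sidesteps this precisely because the $L_\infty$ promise has \emph{at most one} heavy coordinate, so each PCA call only needs to yield $\log d$ bits (a column index), and the remaining information is extracted across $\log_d n$ adaptive rounds. Second, the claim that an $r$-round protocol can be reduced to a one-way protocol ``via a public-coin direct-sum argument'' so that Augmented Indexing applies is false in general; $L_\infty$, by contrast, is hard for \emph{arbitrary} two-party interactive protocols, which is exactly what the statement requires. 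Third, you flag the ``no amortization across narrowly-missed cells'' step as the main obstacle but do not resolve it; the paper's construction never faces it, because the promise guarantees a unique heavy entry and the planted $B\cdot I_{k-1}$ block (together with the $\sum_t|e_t P|_2^2=k$ accounting) forces any admissible $P$ to allocate nearly a full unit of projection mass to the heavy column, giving a clean Frobenius-gap argument with no cross-cell cancellation to control. Finally, your exponent derivation is stated only heuristically (``balancing forces the cell count to scale like\ldots''); the actual exponents come directly from the explicit choice of $B$ and the $m/B^2$ form of the $L_\infty$ bound, and the $d^{-4/p}$ in particular arises from the padding geometry of the paper's construction rather than from any natural packing argument, so it would need to be derived rather than asserted.
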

We prove it by reduction from the $L_{\infty}$ problem \cite{bar2002information}.
\begin{Theorem}[Theorem 8.1 of \cite{bar2002information}\label{L_inf}]
There are two players. Alice gets $x$, Bob gets $y$, where $x,y$ are length-$m$ vectors. We have $x_i, y_i \in \{0, 1, \dots B\}$ for all $i$.
 There is a promise on the input $(x, y)$ that
 either $\forall i \in [m], |x_i - y_i| \leq 1$ or $\exists ! i, |x_i-y_i|=B$ and $\forall j \neq i, |x_j-y_j| \leq 1$.
 The goal is to determine which case the input is in. If the players want to get the right answer with constant probability greater than $3/4$, then the communication required is $\Omega(\frac{m}{B^2})$ bits.
\end{Theorem}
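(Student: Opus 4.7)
The plan is to reduce the two-party $L_\infty$ promise problem of Theorem \ref{L_inf} to the $(1+\varepsilon)$-relative-error rank-$k$ projection problem on $A=f(A^{1}+A^{2})$, working with the specific choice $f(x)=|x|^{p}$ (which lies in the class $\Omega(|x|^{p})$). Given an $L_\infty$ instance $(x,y)$ of length $m=nd$ with entries in $\{0,\ldots,B\}$, fix a bijection $\sigma:[m]\to[n]\times[d]$; Alice sets $A^{1}_{\sigma(i)}=x_i$ and Bob sets $A^{2}_{\sigma(i)}=-y_i$, with all remaining entries zero, so $A_{\sigma(i)}=|x_i-y_i|^{p}$. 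Choose the integer $B=\Theta\bigl(n^{1/(2p)}d^{2/p}(1+\varepsilon)^{1/p}\bigr)$, which makes $m/B^{2}=\Theta\bigl((1+\varepsilon)^{-2/p}n^{1-1/p}d^{1-4/p}\bigr)$, the target lower bound.

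The separation is driven by the super-linear growth of $f$. In the NO case every difference satisfies $|x_i-y_i|\leq 1$, so $|A_{ij}|\leq 1$ for all entries and $\|A\|_F^{2}\leq nd$; hence $\|AP\|_F^{2}\leq\|A\|_F^{2}\leq nd$ for every rank-$k$ projection $P$. In the YES case, exactly one entry of $A$ equals $B^{p}$ with $B^{2p}=\Omega(nd^{4}(1+\varepsilon)^{2})$, while all other entries lie in $[0,1]$, so $\|A\|_F^{2}\geq B^{2p}$. Comparing against the rank-$1$ matrix $B^{p}e_{i^{*}}e_{j^{*}}^{\top}$ supported on the spike gives $\|A-[A]_k\|_F^{2}\leq\|A-B^{p}e_{i^{*}}e_{j^{*}}^{\top}\|_F^{2}=\|A\|_F^{2}-B^{2p}\leq nd$. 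Any rank-$k$ projection $P$ with $\|A-AP\|_F^{2}\leq(1+\varepsilon)\|A-[A]_k\|_F^{2}$ therefore satisfies
\[
\|AP\|_F^{2}\geq \|A\|_F^{2}-(1+\varepsilon)\,nd \geq B^{2p}-(1+\varepsilon)nd=\Omega(B^{2p}),
\]
a factor $\Omega\bigl(d^{3}(1+\varepsilon)^{2}\bigr)$ above the NO ceiling of $O(nd)$.

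To turn this spectral gap into a YES/NO answer for $L_\infty$, once the PCA protocol outputs $P$ at the central processor, Alice first inspects $P$ to read off the column index $j^{*}$ where the top singular direction of $A$ must concentrate in the YES case, and Alice and Bob then estimate $\|AP\|_F^{2}$ up to a constant factor via a short post-processing step (for instance by exchanging the $j^{*}$-th column of their matrices and scanning locally for an entry of magnitude $\geq B^{p}/2$, or by sampling $\tilde O(1)$ rows and using shared randomness to build an $\ell_{2}$-sketch of $AP$). Since the YES and NO values of $\|AP\|_F^{2}$ differ by a factor polynomial in $d$, this post-processing has communication negligible compared with the main $\tilde{\Omega}(m/B^{2})$ term in the regime where the bound is informative. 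Combining Theorem \ref{L_inf} with this reduction, together with $O(1)$-fold repetition to boost the PCA success probability, gives the claim.

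The main obstacle is making the spectral separation robust to adversarial NO-case inputs under the $(1+\varepsilon)$-relative-error slack. For example, the adversary can set $x-y\equiv 1$ so that $A=\mathbf{1}\mathbf{1}^{\top}$ has rank $1$ with $\sigma_1(A)^{2}=\Theta(nd)$, and one must check that even in this worst case $\|AP\|_F^{2}\leq\|A\|_F^{2}=\Theta(nd)$, keeping the NO output strictly below the YES threshold. The required dominance $B^{2p}\gg nd\cdot(1+\varepsilon)$ is exactly what pins down the exponents $n^{1-1/p}$, $d^{1-4/p}$, and $(1+\varepsilon)^{-2/p}$ in the final bound; any weakening of the embedding (smaller $m$, smaller $f$ growth, or different placement) degrades one of these exponents.
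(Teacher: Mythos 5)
You have proved the wrong statement. The statement in question is Theorem~\ref{L_inf}, which is the $L_\infty$ promise-problem communication lower bound itself -- a result the paper simply \emph{cites} from Bar-Yossef, Jayram, Kumar, and Sivakumar (Theorem~8.1 of \cite{bar2002information}). What you have written is a reduction \emph{from} $L_\infty$ \emph{to} the $(1+\varepsilon)$-relative-error PCA problem, which is the skeleton of the paper's proof of Theorem~\ref{lower1}, not of Theorem~\ref{L_inf}. The direction of your reduction is the wrong one for the stated goal: to prove that $L_\infty$ requires $\Omega(m/B^2)$ bits, you would need either a direct information-complexity/direct-sum argument on the $m$ coordinates (as in \cite{bar2002information}) or a reduction of some \emph{other} known-hard problem \emph{into} $L_\infty$. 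A reduction that consumes $L_\infty$ as the hard problem cannot establish $L_\infty$'s hardness -- it presupposes it. So as a proof of Theorem~\ref{L_inf}, the proposal is vacuous.

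Even read charitably as an attempt at Theorem~\ref{lower1}, the final step does not close. After the PCA protocol returns $P$, you propose that Alice identify the heavy column index $j^{*}$ and then have the players ``exchange the $j^{*}$-th column of their matrices'' to scan for the spike. That exchange costs $\Theta(n\log B)$ bits, which for the interesting parameter regime dwarfs the target bound $\tilde{\Omega}\bigl((1+\varepsilon)^{-2/p}n^{1-1/p}d^{1-4/p}\bigr)$, so the protocol is not cheap enough to yield a contradiction with Theorem~\ref{L_inf}. The paper avoids this by never shipping the column: it pads $A^{1}$ with a $B\cdot I_{k-1}$ block so the rank-$k$ span is forced to ``point at'' the spike column, repeats the PCA call to amplify via Chernoff, and then \emph{recurses}, rearranging the identified column of length $n_r$ into a new $\lceil n_r/d\rceil\times d$ instance. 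After $O(\log_d n)$ rounds the surviving entry is isolated and only a single index is exchanged at the end. This recursion, together with the amplification and the $B\cdot I_{k-1}$ gadget, is the content your proposal is missing, and none of it is about proving the $L_\infty$ bound itself.
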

\begin{proof}[Proof of Theorem \ref{lower1}]
Without loss of generality, we assume $f(x)\equiv |x|^p$. We prove by contradiction. If Alice or Bob can compute the projection matrix $P$ with communication $\tilde{o}((1+\varepsilon)^{-\frac{2}{p}}n^{1-\frac{1}{p}}d^{1-\frac{4}{p}})$ bits, and the success probability is $2/3$, then they can compute $L_{\infty}$ in $o(\frac{m}{B^2})$ bits for $m=n\times d$ and $B=\lceil(2(1+\varepsilon)^2nd^4)^{\frac{1}{2p}}\rceil$.

Assume Alice and Bob have two $m$-dimensional vectors $x,y$ respectively, where $m=n\times d$. $(x,y)$ satisfies the promise mentioned in Theorem \ref{L_inf}. The goal of the players is to find whether there is a coordinate $i$ with $|x_i-y_i|>=\lceil(2(1+\varepsilon)^2nd^4)^{\frac{1}{2p}}\rceil=B$. They agree on the following protocol:

\begin{enumerate}
\item Initially, round $r:=0,n_0:=n$
\item Alice arranges $x$ into an $n\times d$ matrix $A_{(0)}'^1$ and Bob arranges $-y$ into $A_{(0)}'^2$ in the same way.
\item Alice makes $A_{(r)}^1=\left(\begin{array}{cc} A_{(r)}'^1 & 0\\ 0 & B\times I_{k-1} \end{array}\right)$, Bob makes $A_{(r)}^2=\left(\begin{array}{cc} A_{(r)}'^2 & 0\\ 0 & 0 \end{array}\right)$.
\item Alice computes the projection matrix $P$ which satisfies that when the protocol succeeds, $A_{(r)}P$ is the rank-$k$ approximation matrix to $A_{(r)}$, where $(A_{(r)})_{i,j}=|(A_{(r)}^1)_{i,j}+(A_{(r)}^2)_{i,j}|^p$.
\item Alice sorts $e_1^{d+k-1}, \ldots ,e_{d+k-1}^{d+k-1}$ into $e_{i_1}^{d+k-1},...,e_{i_{d+k-1}}^{d+k-1}$ such that $|e_{i_1}^{d+k-1}P|_2\geq \ldots \geq|e_{i_{d+k-1}}^{d+k-1}P|_2$. She finds $i_l\in[d+k-1]$ which satisfies $i_l\leq d\wedge l\leq k$.
\item Alice repeats steps 4-5 $100\lceil\ln(\log_dn)+1\rceil$ times and sets $c$ to be the most frequent $i_l$.
\item Alice rearranges the entries of column $c$ of $A'^1_{(r)}$ into $d$ columns(If $n_r<d$, she fills $d-n_r$ columns with zeros). Thus, Alice gets $A'^1_{(r+1)}$ with $\lceil n_r/d\rceil$ rows. She sends $c$ to Bob.
\item Bob gets $c$. He also rearranges the entries of column $c$ into $d$ columns in the same way. So, he gets $A'^2_{(r+1)}$ with $\lceil n_r/d\rceil$ rows. If $A'^2_{(r+1)}$ has a unique non-zero entry $(A'^2_{(r+1)})_{1,j}$, he sends $j$ to Alice and Alice checks whether $|(A'^1_{(r+1)})_{1,j}+(A'^2_{(r+1)})_{1,j}|=B$. Otherwise, let $r:=r+1,n_{r+1}:=\lceil n_r/d\rceil$, and repeat steps 3-8.
\end{enumerate}

\textbf{Claim:} \textit{If $\exists i,j$ s.t. $|(A_{(r)}^1)_{i,j}+(A_{(r)}^2)_{i,j}|\geq B,j\leq d$, and Alice successfully computes $P$ in step 4, then $i_l$ will be equal to $j$ in step 5.}

Assume Alice successfully computes $P$. Notice that $\sum_{t=1}^{d+k-1} |e_t^{d+k-1}P|_2^2=||P||_F^2=k$. $\forall j\leq d$. If $i_l\not=j$, we have that $|e_j^{d+k-1}P|_2^2\leq \frac{k}{k+1}$. So, if $|(A_{(r)}^1)_{i,j}+(A_{(r)}^2)_{i,j}|\geq B$, we have:
\begin{align*}
& ||A_{(r)}-A_{(r)}P||_F^2\\
& \geq ((A_{(r)})_{i,j}-(A_{(r)}P)_{i,j})^2 \geq(B^p-(\frac{k}{k+1}B^p+d))^2\\
& >(2(1+\varepsilon)\sqrt{n_r}d-d)^2 \geq (1+\varepsilon)^2n_rd^2\\
& >(1+\varepsilon)^2n_rd \geq(1+\varepsilon)^2\min_{X:rank(X)\leq k}||A_{(r)}-X||_F^2 \\
& >||A_{(r)}-A_{(r)}P||_F^2
\end{align*}

Therefore, we have a contradiction. Because the probability that Alice successfully computes $P$ in step 4 is at least $2/3$, according to Chernoff bounds, the probability that more than half of the repetitions in step 6 successfully compute a rank-$k$ approximation is at least $1-\frac1{8\log_dn}$. Due to the claim, if there exists $j$, then $|(A_{(r)}^1)_{i,j}+(A_{(r)}^2)_{i,j}|\geq B$, and $c$ will be equal to $j$ in step 6 with probability at least $1-\frac1{8\log_dn}$. Then, applying a union bound, if there is a coordinate $i$ that $|x_i-y_i|\geq B$, then the probability that the entry will survive until the final check is at least $7/8$. Therefore, the players can output the right answer for the $L_{\infty}$ problem with probability at least $7/8$.

Analysis of the communication cost of the above protocol: there are at most $\lceil\log_d n\rceil$ rounds. In each round, there are $100\lceil\ln(\log_dn)+1\rceil$ repetitions in step 6. Combining with the costs of sending column index $c$ in each round and the final check, the total cost is $\tilde{o}((1+\varepsilon)^{-\frac{2}{p}}n^{1-\frac{1}{p}}d^{1-\frac{4}{p}})$ bits, but according to Theorem \ref{L_inf}, it must be $\tilde{\Omega}((1+\varepsilon)^{-\frac{2}{p}}n^{1-\frac{1}{p}}d^{1-\frac{4}{p}})$. Therefore, it leads to a contradiction.

In the above reduction, a main observation is that $B^p$ is large enough that Alice and Bob can distinguish a large column. Now, consider the function $f(x)=\Omega(|x|^p)$. Let $B=O((2(1+\varepsilon)^2nd^4)^{\frac{1}{2p}})$. Then, the above reduction still works.
\end{proof}

Theorem \ref{lower1} implies that it is hard to design an efficient relative error approximate PCA protocol for $f(\cdot)$ which grows very fast.

\begin{Theorem}\label{lower3}
Alice and Bob have matrices $A^1,A^2\in \mathbb{R}^{n\times d}$ respectively. Let $A_{i,j}=\max(A^1_{i,j},A^2_{i,j})($or $\psi(A^1_{i,j}+A^2_{i,j}))$, where $\psi(x)$ is $\psi$-function of Huber. For $k>1$, the communication of computing rank-$k$ projection matrix $P$ with probability $2/3$ such that $$||A-AP||_F^2\leq(1+\varepsilon)||A-[A]_k||_F^2$$
needs $\tilde{\Omega}(nd)$ bits.
\end{Theorem}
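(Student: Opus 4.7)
The natural plan is to reduce from a communication problem whose randomized multi-round complexity is $\tilde{\Omega}(nd)$, and to use an encoding that survives the boundedness of $\max$ and $\psi_{\mathrm{Huber}}$. Since the $L_\infty$ problem of Theorem \ref{L_inf} gives an $\Omega(m/B^2)$ lower bound on inputs of length $m$, taking $m = nd$ with a constant threshold $B$ already yields $\Omega(nd)$ bits, so reducing from $L_\infty$ in the style of Theorem \ref{lower1} is the first avenue I would pursue.

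I would reuse the block construction from the proof of Theorem \ref{lower1}: place the $L_\infty$ instance in the top-left $n \times d$ block of $A^1, A^2$, and append a ``template'' block in the bottom-right, e.g.\ a scaled identity $C \cdot I_{k-1}$ that Alice holds and Bob zeros out. For $f = \max$, the template survives verbatim because $\max(C, 0) = C$; for $f = \psi_{\mathrm{Huber}}$, pick $C$ so the template sits either entirely in the linear or entirely in the saturated regime. The template pins $k-1$ of the top-$k$ singular directions to the known template coordinates, leaving the remaining direction free to align with a ``heavy'' column of the top-left block whenever an $L_\infty$ spike exists. Iterating exactly as in Theorem \ref{lower1} --- identify the free column from $P$, reshape into an $\lceil n/d\rceil \times d$ matrix, and repeat --- terminates in $O(\log_d n)$ rounds and pins down the spike's coordinate, which lets Alice and Bob decide $L_\infty$.

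The main obstacle is replacing the $B^p$ scaling that drove Theorem \ref{lower1}'s quantitative argument. There, a single entry of magnitude $B^p$ created an $\Omega(B^{2p})$ gap in the rank-$k$ approximation error between the two $L_\infty$ cases, which comfortably overwhelmed the $(1+\varepsilon)$ factor and the $O(d)$ background contribution from the other entries. Under $\max$ or $\psi_{\mathrm{Huber}}$ the heavy entry has bounded magnitude, so the signal is only $\Theta(1)$ while the background from the other $nd-1$ entries can be as large as $O(nd)$. To preserve the reduction I would tune the template scale $C$ (and the range used for $x,y$) so that the best rank-$k$ error without the spike is just above the $(1+\varepsilon)$ threshold relative to the error with the spike included; the ``heavy column'' inequality from Theorem \ref{lower1} then becomes a delicate but finite calculation rather than a trivial polynomial separation. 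For $\psi_{\mathrm{Huber}}$ one additionally has to check that the intended entries cross the saturation threshold and the template does not (a bookkeeping check), while for $\max$ the only requirement is consistency of signs of the encoded values. Since the reduction only needs a constant $B$, the $L_\infty$ lower bound $\Omega(nd/B^2)$ translates directly into the stated $\tilde{\Omega}(nd)$ bound, with the poly-logarithmic overhead from the $O(\log_d n)$ rounds and the $O(\log(\log_d n))$ PCA-boosting repetitions absorbed into the $\tilde{\Omega}$ notation.
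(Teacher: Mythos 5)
Your reduction is from the wrong problem, and the quantitative ``tuning'' step you identify as the main obstacle is in fact a fatal one. The paper reduces from $2$-DISJ (Theorem~\ref{2DISJ}), not from $L_\infty$, and the reason is structural: under a bounded function like $\max$ or $\psi_{\mathrm{Huber}}$, a single spike contributes only $O(1)$ to $\|A - [A]_k\|_F^2$, whereas the background entries contribute $\Theta(nd)$. So if the two $L_\infty$ cases give rank-$k$ errors $E$ and $E + O(1)$ with $E = \Theta(nd)$, their ratio is $1 + O(1/(nd))$, which is swamped by the $(1+\varepsilon)$ slack for any fixed $\varepsilon$. No choice of template scale $C$ fixes this, because the background error is unavoidable once the top-left block is a ``generic'' bounded matrix. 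There is also a more basic encoding problem: with $f = \max$, if Alice holds $x_i \ge 0$ and Bob holds $-y_i \le 0$, then $\max(x_i, -y_i) = x_i$ and Bob's input is simply ignored, so the $L_\infty$ structure is destroyed before the rank argument even starts.

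The paper's trick is to sidestep the quantitative comparison entirely by forcing $\|A - [A]_k\|_F^2 = 0$. Starting from a $2$-DISJ instance, Alice and Bob flip their bits, so after $\max$ (or Huber $\psi$) the $n\times d$ block is all ones except for at most one zero (at the unique intersection, if any). Appending an all-ones row and an $I_{k-2}$ block makes $\mathrm{rank}(A) \le k$ in both cases. A $(1+\varepsilon)$-relative-error projection must then satisfy $\|A - AP\|_F^2 = 0$, i.e.\ $P$'s row space equals $A$'s row space exactly, and the presence or absence of a vector of the form $(\bar e_j^{\,d}\ 0)$ in that row space reveals the column of the zero entry. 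The recursion over $O(\log_d n)$ rounds then pins down the exact coordinate, exactly as in Theorem~\ref{lower1}, but the driving inequality is ``error is zero so row spaces match,'' not a magnitude separation. To repair your proposal you would need to switch to a reduction (such as $2$-DISJ) where the target matrix can be made exactly low-rank under the bounded nonlinearity.
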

This reduction is from $2$-DISJ problem \cite{razborov1992distributional}. This result gives the motivation that we focus on additive error algorithms for $f(\cdot)$ is $GM$ or $\psi$-function of M-estimators.

\begin{Theorem}{\cite{razborov1992distributional}}\label{2DISJ}
Alice and Bob are given two $n$-bit binary vectors respectively. Either there exists a unique entry in both vectors which is $1$, or for all entries in at least one of the vectors it is $0$. If they want to distinguish these two cases with probability $2/3$, the communication cost is $\Omega(n)$ bits.
\end{Theorem}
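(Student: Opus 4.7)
The plan is to follow the information-complexity / direct-sum methodology (Bar-Yossef, Jayram, Kumar, Sivakumar) to reduce $n$-bit Unique-Disjointness to $n$ independent copies of the $2$-bit $\text{AND}$ function, for which a constant information-cost lower bound can be proved by hand. This recovers Razborov's $\Omega(n)$ bound while being more modular than the original corruption argument.

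First I would fix the hard distribution. Define $\nu$ on $\{0,1\}^2$ supported uniformly on $\{(0,0),(0,1),(1,0)\}$, and let $\mu=\nu^{\otimes n}$. A sample from $\mu$ is automatically a NO-instance of Unique-DISJ (coordinate-wise disjoint). A YES-instance is obtained by picking a uniformly random coordinate $J\in[n]$ and overwriting $(X_J,Y_J)$ with $(1,1)$; call this mixture distribution $\zeta$. For a randomized protocol $\Pi$ with transcript $\Pi(X,Y)$, define the (conditional) internal information cost
\begin{equation*}
\text{ICost}_\mu(\Pi) \;=\; I\!\left(X,Y\,;\,\Pi(X,Y)\,\bigm|\,D\right),
\end{equation*}
where $D$ is the shared source of randomness that selects $(X_i,Y_i)\sim\nu$ coordinate by coordinate. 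The standard fact $\text{CC}(\Pi)\ge\text{ICost}_\mu(\Pi)$ reduces the theorem to showing $\text{ICost}_\mu(\Pi)=\Omega(n)$ for any $\Pi$ that solves Unique-DISJ with error $1/3$.

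Next I would prove the direct-sum step. Given a correct protocol $\Pi$ for Unique-DISJ, Alice and Bob can solve a single instance of $\text{AND}(a,b)$ (with $a,b\in\{0,1\}$) as follows: pick $J\in[n]$ uniformly using public randomness, place $(a,b)$ at coordinate $J$, fill the remaining coordinates with independent samples from $\nu$ (using a mix of public and private randomness so that Alice knows only her column and Bob knows only his), and run $\Pi$. If $(a,b)\ne(1,1)$ the constructed instance is in the support of $\mu$ and is a NO-instance; if $(a,b)=(1,1)$ it is a YES-instance, so $\Pi$'s output equals $\text{AND}(a,b)$ with error $1/3$. A chain-rule calculation gives
\begin{equation*}
\text{ICost}_\mu(\Pi)\;=\;\sum_{j=1}^{n} I(X_j,Y_j;\Pi\mid X_{<j},Y_{<j},D)\;\ge\;n\cdot\text{ICost}_\nu(\Pi_{\text{AND}}),
\end{equation*}
where $\Pi_{\text{AND}}$ denotes the single-coordinate-simulation protocol and the inequality uses that conditioning on $D$ and the other coordinates makes them independent.

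Finally, and this is the main obstacle, I would prove the single-coordinate lower bound $\text{ICost}_\nu(\Pi_{\text{AND}})=\Omega(1)$ for any $2/3$-correct protocol for $\text{AND}$. The difficulty is that $\nu$ does not place mass on $(1,1)$, so one cannot argue directly that $\Pi_{\text{AND}}$ must ``distinguish'' inputs inside the support. The BJKS trick is to use Hellinger distance between the induced transcript distributions $\Pi_{ab}$ on inputs $(a,b)$, combined with two structural properties of communication protocols: (i) the \emph{rectangle / cut-and-paste} identity $h(\Pi_{00},\Pi_{11})=h(\Pi_{01},\Pi_{10})$, and (ii) the Pythagorean-style inequality relating Hellinger distance to mutual information. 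Correctness on $(1,1)$ forces $h(\Pi_{00},\Pi_{11})=\Omega(1)$, hence $h(\Pi_{01},\Pi_{10})=\Omega(1)$, and pushing this through (ii) shows that the expected mutual information under $\nu$ between $(X,Y)$ and $\Pi$ is $\Omega(1)$. Composing with the direct-sum bound and $\text{CC}\ge\text{ICost}$ then yields the claimed $\Omega(n)$ communication lower bound.
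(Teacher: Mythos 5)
The paper does not prove this theorem; it is cited directly from Razborov \cite{razborov1992distributional}, whose original argument is a corruption (one-sided rectangle) bound over a hard distribution on the $n$-bit inputs. Your proposal instead outlines the Bar-Yossef--Jayram--Kumar--Sivakumar information-complexity proof, which is a genuinely different and by now standard route to the same $\Omega(n)$ bound. The BJKS approach buys modularity: the direct-sum argument reduces the $n$-bit problem to bounding the information cost of a single two-bit $\text{AND}$, which is then handled by the Hellinger cut-and-paste identity $h(\Pi_{00},\Pi_{11})=h(\Pi_{01},\Pi_{10})$, soundness forcing $h(\Pi_{00},\Pi_{11})=\Omega(1)$, and the inequality relating squared Hellinger distance to mutual information. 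The one technical point to tighten in your sketch is the role of the conditioning variable $D$: the distribution $\nu$ uniform on $\{(0,0),(0,1),(1,0)\}$ is correlated, so $X_j$ and $Y_j$ are not independent, and both the superadditivity step $I(X,Y;\Pi\mid D)\ge\sum_j I(X_j,Y_j;\Pi\mid D)$ and the embedding step (players must sample the off-coordinates privately, without communication) require introducing an explicit auxiliary $D_j\in\{A,B\}$ that forces one of $X_j,Y_j$ to $0$ while the other is uniform, ensuring $X_j\perp Y_j\mid D_j$. With that spelled out, your chain-rule decomposition and single-coordinate Hellinger argument are correct and recover the stated $\Omega(n)$ lower bound.
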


\begin{proof}[Proof of Theorem \ref{lower3}]
Specifically, for the Huber $\psi$-function, we assume $\psi(0)=0,\psi(1)=1,\psi(2)=1$.
Alice and Bob are given $(n\times d)$-bit binary vectors $x$ and $y$ respectively. $x$ and $y$ satisfy the promise in Theorem \ref{2DISJ}.  If they can successfully compute a projection matrix $P$ mentioned in Theorem \ref{lower3} with probability $2/3$, then they can agree on the following protocol to solve 2-DISJ:
\begin{enumerate}
\item Initialize round $r:=0,n_0:=n$
\item The players flip all the bits in $x$, $y$ and arrange flipped $x$, $y$ in $n\times d$ matrices $A'^1_{(0)}$, $A'^2_{(0)}$ respectively.
\item Alice makes $A_{(r)}^1=\left(\begin{array}{cc}A_{(r)}'^1 & 0\\ 1^d & 0\\ 0 & I_{k-2} \end{array}\right)$, Bob makes $A_{(r)}^2=\left(\begin{array}{cc}A_{(r)}'^2 & 0\\ 0 & 0\\ 0 & 0 \end{array}\right)$.
\item Let $(A_{(r)})_{i,j}=\max((A_{(r)}^1)_{i,j},(A_{(r)}^2)_{i,j})$ (or $(A_{(r)})_{i,j}=\psi((A_{(r)}^1)_{i,j}+(A_{(r)}^2)_{i,j})$). Alice computes $P$ as mentioned in the statement of Theorem 5. If the protocol succeeds, $A_{(r)}P$ is the rank-$k$ approximation matrix to $A_{(r)}$.
\item Alice finds $l\in[d]$ such that $(\bar{e_l}^d\ 0)P=(\bar{e_l}^d\ 0)$
\item Alice repeats steps 4-5 $100\lceil\ln(\log_dn)+1\rceil$ times and sets $c$ to be the most frequent $l$.
\item Alice rearranges the entries of column $c$ of $A'^1_{(r)}$ into $A'^1_{(r+1)}$ whose size is $\lceil n_r/d\rceil\times d$. She sends $c$ to Bob.
\item Bob also rearranges the entries of column $c$ into $d$ columns in the same way. Thus, he gets $A'^2_{(r+1)}$. If $A'^2_{(r+1)}$ has a unique zero entry $(A'^2_{(r+1)})_{1,j}$, he sends $j$ to Alice and Alice checks whether $(A'^1_{(r+1)})_{1,j}=0$. Otherwise, Let $r:=r+1,n_{(r+1)}:=\lceil n_r/d\rceil$, repeat steps 3-8.
\end{enumerate}

An observation is that $\forall r$, there is at most one pair of $i,j$ such that $\max((A_{(r)}^1)_{i,j},(A_{(r)}^2)_{i,j})(\text{or } \psi((A_{(r)}^1)_{i,j}+(A_{(r)}^2)_{i,j}))=0$. Therefore, the rank of $A_{(r)}$ is at most $k$. If Alice successfully computes $P$,
$$||A_{(r)}-A_{(r)}P||_F^2\leq (1+\varepsilon)||A_{(r)}-[A_{(r)}]_k||_F^2=0$$
The row space of $P$ is equal to the row space of $A_{(r)}$. Notice that if $(\bar{e_i}^d\ 0)$ is in the row space of $A_{(r)}$, for $j\not=i$, $(\bar{e_j}^d\ 0)$ cannot be in the row space of $A_{(r)}$. If Alice succeeds in step 4, she will find at most one $l$ in step 5. Furthermore, if $\exists i,j$ s.t. $(A_{(r)})_{i,j}=0,j\leq d$, $(\bar{e_j}^d\ 0)$ must be in the row space of $A_{(r)}$. Then $l$ will be equal to $j$ in step 5. Similar to the proof of Theorem \ref{lower1}, according to a Chernoff bound and a union bound, if there is a joint coordinate, Alice and Bob will find the coordinate with probability at least $7/8$.

The total communication includes the communication of at most $100\lceil\ln(\log_dn)+1\rceil\times\log_d n$ times of the computation of a rank-$k$ approximation and the communication of sending several indices in step 7-8. Due to Theorem \ref{2DISJ}, the total communication is $\Omega(nd)$ bits. Thus, computing $P$ needs $\tilde{\Omega}(nd)$ bits.
\end{proof}

Finally, the following reveals the relation between lower bounds of relative error algorithms and $\varepsilon$.
\begin{Theorem}\label{lower2}
Alice and Bob have $A^1,A^2\in\mathbb{R}^{n\times d}$ respectively. Let $A_{i,j}=f(A^1_{i,j}+A^2_{i,j})$, where $f(x)=x^p($or $|x|^p),p\not=0$. Suppose $1/\varepsilon^2\leq 2n$, the communication of computing a rank-$k$ $P$ with probability $2/3$ such that
$$||A-AP||_F^2\leq(1+\varepsilon)||A-[A]_k||_F^2$$
needs $\Omega(1/\varepsilon^2)$ bits.
\end{Theorem}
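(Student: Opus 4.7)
The plan is to reduce from the Gap-Hamming Distance problem (GHD), for which any randomized two-party protocol succeeding with probability $\geq 2/3$ requires $\Omega(N)$ bits. In GHD, Alice holds $x \in \{-1,+1\}^N$ and Bob holds $y \in \{-1,+1\}^N$ under the promise that either $\langle x,y\rangle \geq 2\sqrt{N}$ or $\langle x,y\rangle \leq -2\sqrt{N}$. Set $N = \Theta(1/\varepsilon^2)$, which is allowed by the hypothesis $1/\varepsilon^2 \leq 2n$. The players encode $(x,y)$ into $A^1, A^2 \in \mathbb{R}^{n\times d}$ by placing, for $i\in[N]$, $A^1_{i,1}=A^1_{i,2}=x_i$ and $A^2_{i,1}=y_i,\ A^2_{i,2}=-y_i$, with zeros elsewhere in the first two columns. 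Then $A_{i,1}=f(x_i+y_i)$ has magnitude $2^p$ iff $x_i=y_i$ (zero otherwise), and $A_{i,2}=f(x_i-y_i)$ has magnitude $2^p$ iff $x_i\neq y_i$. For $k>1$, Alice additionally places a scaled identity $M\cdot I_{k-1}$ in rows $N+1,\dots,N+k-1$ of columns $3,\dots,k+1$, with $M$ chosen much larger than $\sqrt{N}$, and Bob places zeros there; this creates $k-1$ "phantom" singular directions.

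Since the first two columns of $A$ have disjoint supports, the corresponding block of $A^TA$ is diagonal with entries $\|A_{:,1}\|_2^2 = 4^p(N+\langle x,y\rangle)/2$ and $\|A_{:,2}\|_2^2 = 4^p(N-\langle x,y\rangle)/2$, so the top singular direction of the main block is $e_1$ when $\langle x,y\rangle>0$ and $e_2$ when $\langle x,y\rangle<0$, with $\|A-[A]_k\|_F^2 = 4^p(N-|\langle x,y\rangle|)/2$. Suppose the protocol produces a rank-$k$ projection $P$ obeying $\|A-AP\|_F^2 \leq (1+\varepsilon)\|A-[A]_k\|_F^2$; both players know $P$ at the end. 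Taking $M$ large forces $P$ to contain $e_3,\dots,e_{k+1}$ up to negligible error, leaving one rank-$1$ direction in $\mathrm{span}(e_1,e_2)$, say $\hat v = \alpha e_1 + \beta e_2$ with $\alpha^2+\beta^2 = 1$. The Pythagorean identity then gives
\[
\alpha^2\sigma_1^2 + \beta^2\sigma_2^2 \;=\; \|A\hat v\|_2^2 \;\geq\; \sigma_1^2 - \varepsilon\sigma_2^2,
\]
so $\beta^2 \leq \varepsilon\sigma_2^2/(\sigma_1^2-\sigma_2^2) = \varepsilon(N-|\langle x,y\rangle|)/(2|\langle x,y\rangle|)$. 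Plugging in $N \leq 1/\varepsilon^2$ and the GHD gap $|\langle x,y\rangle|\geq 2\sqrt{N}$ yields $\beta^2 < \tfrac13$, so $\alpha^2>\beta^2$, and comparing $P_{1,1}$ with $P_{2,2}$ reveals the sign of $\langle x,y\rangle$. This solves GHD at no extra communication cost, so any such PCA protocol requires $\Omega(N)=\Omega(1/\varepsilon^2)$ bits.

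The main obstacle will be the phantom-padding step: one must pick $M$ so that any rank-$k$ subspace failing to capture each of $e_3,\dots,e_{k+1}$ to within tiny slack incurs error vastly exceeding $(1+\varepsilon)\|A-[A]_k\|_F^2$, and then argue that the leftover free direction genuinely lies in $\mathrm{span}(e_1,e_2)$ up to an $O(\sqrt{\varepsilon})$ perturbation small enough that the Pythagorean bound above is not spoiled. A secondary technicality is the uniform treatment of the case $p<0$, where $f(0)$ is undefined: this can be circumvented by shifting both parties' entries by a fixed constant (equivalently, reducing from GHD on $\{c-1,c+1\}$-valued inputs for large $c$) and redoing the orthogonality/singular-value bookkeeping in the perturbed basis. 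Both are technical but conceptually routine.
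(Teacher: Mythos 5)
Your reduction is from the same source problem (GHD) and uses the same overall device (phantom padding to pin down $k-1$ directions, then read the sign of $\langle x,y\rangle$ off the remaining degree of freedom of $P$), but the encoding is genuinely different from the paper's. The paper uses a single data column carrying $x+y$ scaled by $\varepsilon$, plus a \emph{fixed} reference column with every useful entry equal to $\sqrt{2}$; the threshold $\|A-[A]_k\|_F^2$ is then exactly $2$ in one case and $|x+y|_2^2\varepsilon^2<2-4\varepsilon$ in the other, and the decision rule is a comparison of $v_1^2$ (the first coordinate of the free direction of $I-P$) against the fixed threshold $\tfrac12(1+\varepsilon)$. You instead use two complementary-support signal columns carrying $x+y$ and $x-y$, exploit that exactly one of the two is nonzero per row (so the $2\times 2$ block of $A^TA$ is diagonal with entries $4^p(N\pm\langle x,y\rangle)/2$), and read the answer from which of $P_{1,1},P_{2,2}$ is larger. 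Both work; yours is arguably more symmetric, theirs gives an explicit threshold that avoids any "$M$ large" hand-waving. Your Pythagorean computation $\beta^2\le\varepsilon\sigma_2^2/(\sigma_1^2-\sigma_2^2)\le\varepsilon\sqrt N/4\le 1/4$ is correct in the idealized setting where $\hat v\in\mathrm{span}(e_1,e_2)$ exactly.

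The flagged gap is real and is precisely where the two constructions diverge in rigor. You cannot literally "take $M\to\infty$" inside a fixed reduction, so you must pick a concrete $M$, bound $\delta:=\sum_{j\ge3}(1-P_{jj})\le(1+\varepsilon)\sigma_2^2/M^2$ using $\operatorname{tr}(A^TAP)\ge\|A\|_F^2-(1+\varepsilon)\sigma_2^2$, and then rerun the comparison with the extra slack $\delta$: one gets $P_{2,2}\le \varepsilon\sigma_2^2/(\sigma_1^2-\sigma_2^2)+\delta(1+\sigma_2^2/(\sigma_1^2-\sigma_2^2))$, so $M^2\gtrsim\sigma_2^2/\varepsilon$ already suffices. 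The paper achieves this by choosing the phantom magnitude $\sqrt{2(1+\varepsilon)}/\varepsilon$ so that $\sum_{j\ge3}v_j^2\le\varepsilon^2$ drops out directly, and you should do the analogous bookkeeping rather than leave it as a remark. Separately, your proposed fix for $p<0$ (shift inputs to $\{c-1,c+1\}$) repairs the $x+y$ column but not the $x-y$ column, which still produces zeros whenever $x_i=y_i$; so that technicality is not actually dispatched by the shift, and your two-column encoding would need a different workaround there (e.g.\ encode $c+x_i+y_i$ and $c+x_i-y_i$ and recompute the two column norms, which are no longer orthogonal but are still separated). Note the paper's own treatment of $p<0$ is similarly thin, so this is a shared loose end rather than a defect unique to you.
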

To prove this, we show that if relative error approximate PCA can be done in $o(1/\varepsilon^2)$ bits communication, then the GHD problem in \cite{chakrabarti2012optimal} can be solved efficiently.
\begin{Theorem}{\cite{chakrabarti2012optimal}}\label{gaphamming}
Each of Alice and Bob has an $n$-bit vector. There is a promise: either the inputs are at Hamming distance less than $n/2 - c\sqrt{n}$ or greater than $n/2 + c\sqrt{n}$. If they want to distinguish the above two cases with probability $2/3$, the communication required is $\Omega(n)$ bits.
\end{Theorem}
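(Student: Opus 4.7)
The plan is to prove the $\Omega(n)$ communication lower bound via an information-complexity plus direct-sum argument on a carefully chosen product distribution, in the spirit of Chakrabarti--Regev. First, fix a product hard distribution $\mu$ on $(x,y)\in\{0,1\}^n\times\{0,1\}^n$ by drawing each coordinate pair $(x_i,y_i)$ i.i.d.\ from a slightly-biased distribution on $\{0,1\}^2$. Under $\mu$, the Hamming distance $\Delta(x,y)=\sum_i\mathbf{1}[x_i\neq y_i]$ is a sum of $n$ i.i.d.\ indicators, so a Berry--Esseen argument shows that both promise cases $\Delta<n/2-c\sqrt{n}$ and $\Delta>n/2+c\sqrt{n}$ have constant mass for an appropriate $c$. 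Thus $\mu$ places a constant fraction of its support in each of the two promise regions, so any protocol correct with probability $2/3$ overall must succeed on $\mu$-typical legitimate inputs.

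Second, I would pass to internal information cost. For any public-coin randomized protocol $\Pi$ solving GHD with error $1/3$, define
\[
\mathrm{IC}_\mu(\Pi) \;=\; I(X;\Pi\mid Y) + I(Y;\Pi\mid X),
\]
which is well known to be a lower bound on expected communication. Because $\mu$ is a product distribution across coordinates, the chain rule for mutual information gives
\[
\mathrm{IC}_\mu(\Pi) \;=\; \sum_{i=1}^n \bigl( I(X_i;\Pi\mid Y,X_{<i}) + I(Y_i;\Pi\mid X,Y_{<i}) \bigr),
\]
so it suffices to show that each of the $n$ summands is $\Omega(1)$; that would yield $\mathrm{IC}_\mu(\Pi)=\Omega(n)$ and hence the claimed $\Omega(n)$-bit communication bound.

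The principal obstacle is the per-coordinate $\Omega(1)$ lower bound. Unlike in Disjointness or Equality, no individual coordinate is ``hard'' in isolation --- the discriminating signal lies entirely in a $\Theta(\sqrt{n})$ shift of the aggregate sum, and the marginal on any single pair $(X_i,Y_i)$ is almost uniform under $\mu$. The core technical move is an anti-concentration argument for protocol-induced distributions: one shows that if $\Pi$ detects such a small aggregate shift with constant advantage, then on a typical leaf of the protocol tree the induced conditional distribution of $(X_i,Y_i)$ must be noticeably non-product, which by Pinsker's inequality forces $\Omega(1)$ bits of information leakage per coordinate. Formalizing this step is precisely where Chakrabarti--Regev introduce their ``smooth rectangle bound,'' and is the main technical difficulty I would face; streamlined later treatments by Vidick and by Sherstov reach the same conclusion via a convexity / $\chi^2$-divergence inequality combined with hypercontractive estimates on the noise operator. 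Once the single-coordinate bound is established, combining it with the chain-rule decomposition above completes the proof.
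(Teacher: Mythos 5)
The paper states this theorem with a citation to Chakrabarti and Regev and gives no proof of it, so there is no in-paper argument to compare your sketch against. Evaluating the sketch on its own merits: it has a genuine gap exactly at the step you flag as ``the main technical difficulty,'' and the hand-off you make to the cited papers does not fill it. The chain-rule decomposition
$\mathrm{IC}_\mu(\Pi)=\sum_{i}\bigl(I(X_i;\Pi\mid Y,X_{<i})+I(Y_i;\Pi\mid X,Y_{<i})\bigr)$
cannot be paired with a per-summand $\Omega(1)$ bound in any simple way, because under a product $\mu$ the single-coordinate ``subproblem'' (one bit of agreement/disagreement) is trivial and carries no hardness in isolation --- this is precisely why Gap-Hamming resisted information-complexity attacks for years, and it is not a step one can repair by observing that the leaf distribution of $(X_i,Y_i)$ is ``noticeably non-product.'' More importantly, the proofs you invoke to plug the hole (Chakrabarti--Regev's smooth rectangle bound, Sherstov's corruption argument, Vidick's simplification) are not formalizations of your per-coordinate step at all: they live in the rectangle/discrepancy framework, never pass through a coordinate-wise direct sum, and work instead by showing that any large nearly-monochromatic rectangle for GHD would contradict an anti-concentration property of the Hamming distance (equivalently, the inner product) restricted to large product sets, after which the (smooth) rectangle bound converts that into a communication lower bound. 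The information-complexity proofs that do exist (e.g.\ Braverman--Garg--Pankratov--Weinstein) are likewise not a per-coordinate decomposition. So the overall plan --- prove each of the $n$ summands is $\Omega(1)$ and ``combine with the chain-rule decomposition'' --- is not an outline of any known proof, and the heart of the theorem remains a black box in your sketch.
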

\begin{proof}[Proof of Theorem \ref{lower2}]
Without loss of generality, suppose $f(x)\equiv x$. Assume Alice and Bob have $x,y\in\{-1,1\}^{1/\varepsilon^2}$ respectively. $ \langle x,y \rangle$ denotes the inner product between $x$ and $y$. There is a promise that either $\langle x,y \rangle <-2/\varepsilon$ or $\langle x,y \rangle >2/\varepsilon$ holds. Alice and Bob agree on the following protocol to distinguish the above two cases:

\begin{enumerate}
\item Alice constructs $(1/\varepsilon^2+k)\times (k+1)$ matrix $A^1$ and Bob constructs $A^2$:
\begin{align*}
&A^1=\left(\begin{array}{ccccc}x_1\varepsilon&0&0&...&0\\x_2\varepsilon&0&0&...&0\\...&...&...&...&...\\x_{\frac1{\varepsilon^2}}\varepsilon&0&0&...&0\\
0&\sqrt{2}&0&...&0\\0&0&\frac{\sqrt{2(1+\varepsilon)}}{\varepsilon}&...&0\\...&...&...&...&...\\0&0&0&...&\frac{\sqrt{2(1+\varepsilon)}}{\varepsilon}\end{array}\right)\\
&A^2=\left(\begin{array}{ccccc}y_1\varepsilon&0&0&...&0\\y_2\varepsilon&0&0&...&0\\...&...&...&...&...\\y_{\frac1{\varepsilon^2}}\varepsilon&0&0&...&0\\
0&0&0&...&0\\0&0&0&...&0\\...&...&...&...&...\\0&0&0&...&0\end{array}\right)
\end{align*}
\item Let $A=A^1+A^2$. Alice computes the rank-$k$ projection matrix $P$ such that $$||A-AP||_F^2\leq(1+\varepsilon)||A-[A]_k||_F^2$$
\item Let $u$ be the first row of $(I_{k+1}-P)$. Let $v$ be $u/|u|_2$.
\item Alice checks whether $v_1^2<\frac12(1+\varepsilon)$, if yes, return the case $\langle x,y \rangle >2/\varepsilon$, otherwise, return the case $\langle x,y \rangle <-2/\varepsilon$
\end{enumerate}

Assume that Alice successfully computes $P$. Because the rank of $A$ is at most $k+1$,
$$||A-AP||_F^2=||A(I_{k+1}-P)||_F^2=|Av|_2^2=v^TA^TAv$$
Notice that
$$A^TA=\left(\begin{array}{ccccc}|x+y|_2^2\varepsilon^2&0&0&...&0\\0&2&0&...&0\\0&0&\frac{2(1+\varepsilon)}{\varepsilon^2}&...&0\\...&...&...&...&...\\0&0&0&...&\frac{2(1+\varepsilon)}{\varepsilon^2}\end{array}\right)$$
We have $||A-[A]_k||_F^2\leq 2$. Then, $$\frac{2(1+\varepsilon)}{\varepsilon^2}\sum_{i=3}^{k+1}v_i^2=\frac{2(1+\varepsilon)}{\varepsilon^2}(1-v_1^2-v_2^2)<2(1+\varepsilon)$$
We have $v_1^2+v_2^2>1-\varepsilon^2$.
When $\langle x,y \rangle > 2/\varepsilon$, $|x+y|_2^2\varepsilon^2\geq2+4\varepsilon$
\begin{align*}
&(1+\varepsilon)||A-[A]_k||_F^2=2(1+\varepsilon)\\
&>v_1^2(2+4\varepsilon)+2v_2^2>2-2\varepsilon^2+4\varepsilon v_1^2
\end{align*}
So, $v_1^2<\frac12(1+\varepsilon)$.

When $\langle x,y \rangle <-2/\varepsilon$, $|x+y|_2^2\varepsilon^2\leq2-4\varepsilon$
\begin{align*}
& (1+\varepsilon)||A-[A]_k||_F^2=|x+y|_2^2\varepsilon^2(1+\varepsilon)\\
&>v_1^2|x+y|_2^2\varepsilon^2+2v_2^2=2(v_1^2+v_2^2)-(2-|x+y|_2^2\varepsilon^2)v_1^2\\
&>2(1-\varepsilon^2)-(2-|x+y|_2^2\varepsilon^2)v_1^2
\end{align*}
So, $v_1^2>\frac{2(1-\varepsilon^2)-|x+y|_2^2\varepsilon^2(1+\varepsilon)}{2-|x+y|_2^2\varepsilon^2}\geq\frac12(1+\varepsilon)$.
Therefore, Alice can distinguish these two cases. The only communication cost is for computing the projection matrix $P$. This cost is $o(1/\varepsilon^2)$ bits, which contradicts to the $\Omega(1/\varepsilon^2)$ bits of the gap Hamming distance problem.

Notice in step 1 of the protocol, if we replace $x_i\varepsilon$, $y_i\varepsilon$, $\sqrt{2}$, $\frac{\sqrt{2(1+\varepsilon)}}{\varepsilon}$ with $\frac12x_i(2\varepsilon)^\frac1p$, $\frac12y_i(2\varepsilon)^\frac1p$, $2^\frac1{2p}$, $(\frac{\sqrt{2(1+\varepsilon)}}{\varepsilon})^\frac{1}{p}$ respectively, the above reduction also holds more generally.
\end{proof}

\section{Experiments and evaluations}
 We ran several experiments to test the performance of our algorithm in different applications, including Gaussian random Fourier features \cite{rahimi2007random}, P-norm pooling \cite{boureau2010theoretical} (square-root pooling \cite{yang2009linear}) and robust PCA. We measured actual error given a bound on the total communication.

\textbf{Setup: } We implement our algorithms in C++. We use multiple processes to simulate multiple servers. For each experiment, we give a bound of the total communication. That means we will adjust some parameters including
\begin{enumerate}
\item The number $r$ of sampled rows in Algorithm \ref{PCA}.
\item The number $t$ of repetitions in Algorithm \ref{improvedheavyhitters} and number of hash buckets.
\item Parameters $B$, $W$, and number $e$ of repetitions in Algorithm \ref{estimator}.
\end{enumerate}
to guarantee the ratio of the amount of total communication to the sum of local data sizes is limited. We measure both actual additive error $\left|||A-AP||_F^2-||A-[A]_k||_F^2\right|/||A||_F^2$ and actual relative error $\left|||A-AP||_F^2/||A-[A]_k||_F^2\right|$ for various limitations of the ratios, where $P$ is the output rank-$k$ projection matrix of our algorithm.

\textbf{Datasets: } We ran experiments on datasets in \cite{bache2013uci,rahimi2007random,boureau2010theoretical}. We chose \textit{Forest Cover} ($522000\times 5000$ Fourier features) and \textit{KDDCUP99} ($4898431\times 50$ Fourier features) which are mentioned in \cite{rahimi2007random} to examine low rank approximation for Fourier features. \textit{Caltech-101} ($9145\times 256$ features) and \textit{Scenes} ($4485\times256$ features) mentioned in \cite{boureau2010theoretical} are chosen to evaluate approximate PCA on features of images after generalized mean pooling. Finally, we chose \textit{isolet} ($ 1559\times 617$), which is also chosen by \cite{ding2006r}, to evaluate robust PCA with the $\psi$-function of the Huber M-estimator.

\textbf{Methodologies: } For Gaussian random Fourier features, we randomly distributed the original data to different servers. For datasets \textit{Caltech-101} and \textit{Scenes}, we generated matrices similar to \cite{boureau2010theoretical}: densely extract SIFT descriptors of $64\times 64$ pixels patch every $32$ pixels; use k-means to generate a codebook with size $256$; and generate a $1$-of-$256$ code for each patch. We distributed the $1$-of-$256$ binary codes to different servers. Each server locally pooled the binary codes of the same image. Thus, the global matrix can be obtained by pooling across servers. When doing pooling, we focused on average pooling (P=1), square-root pooling (P=2), and P-norm pooling for P=5 and P=20 for simulating max pooling. Finally, we evaluated robust PCA. To simulate the noise, we randomly changed values of $50$ entries of the feature matrix of \textit{isolet} to be extremely large and then we arbitrarily partitioned the matrix into different servers. Since we can arbitrarily partition the matrix, a server may not know whether an entry is abnormally large. We used the Huber $\psi$-function to filter out the large entries. We ran each experiment $5$ times and measured the average error. The number of servers is $10$ for \textit{Forest Cover}, \textit{Scenes} and \textit{isolet}, and $50$ for \textit{KDDCUP99} and \textit{Caltech-101}. We compared our experimental results with our theoretical predictions. If we sample $r$ rows, we predict the additive error will be $k^2/r$. We also compared the accuracy when we gave different bounds on the ratio of amounts of total communication to the sum of local data sizes. The results are shown in Figure \ref{experimentresult1} and Figure \ref{experimentresult2}. As shown, in practice, our algorithm performed better than its theoretical prediction.

\begin{figure}[b!]
\noindent\begin{tabular}{cc}
  \includegraphics[width=0.245\textwidth]{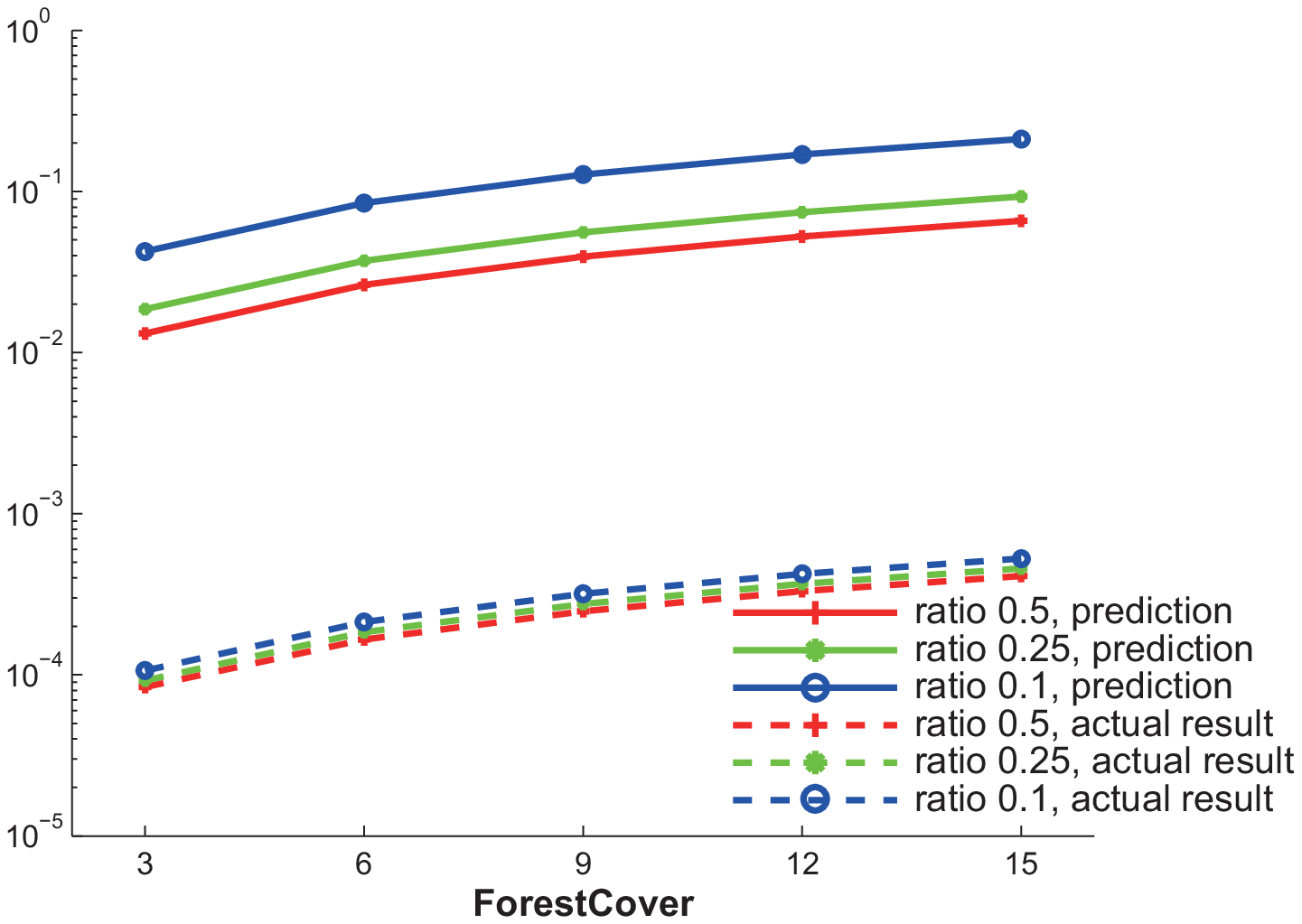}&
  \includegraphics[width=0.245\textwidth]{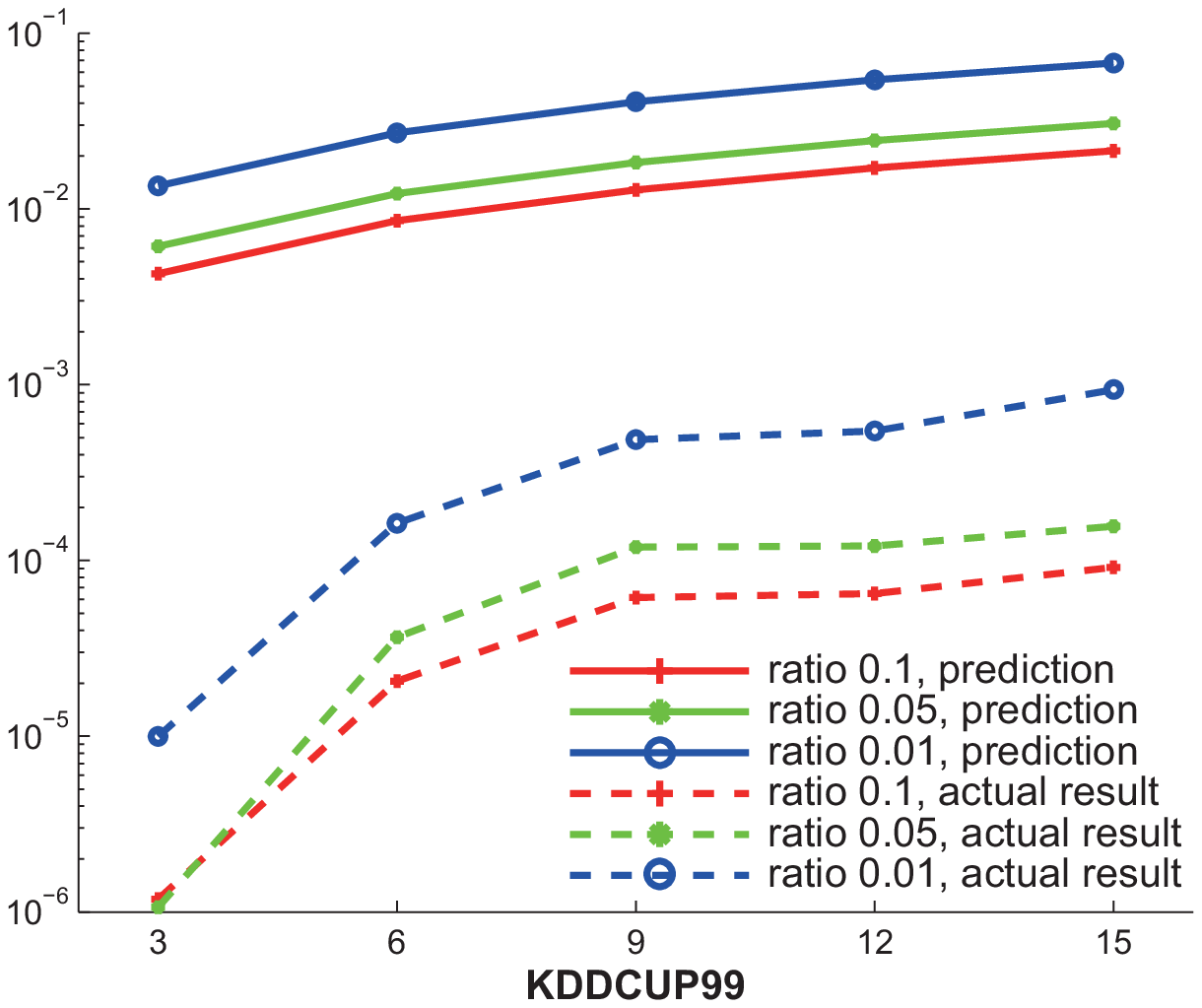}\\
  \includegraphics[width=0.245\textwidth]{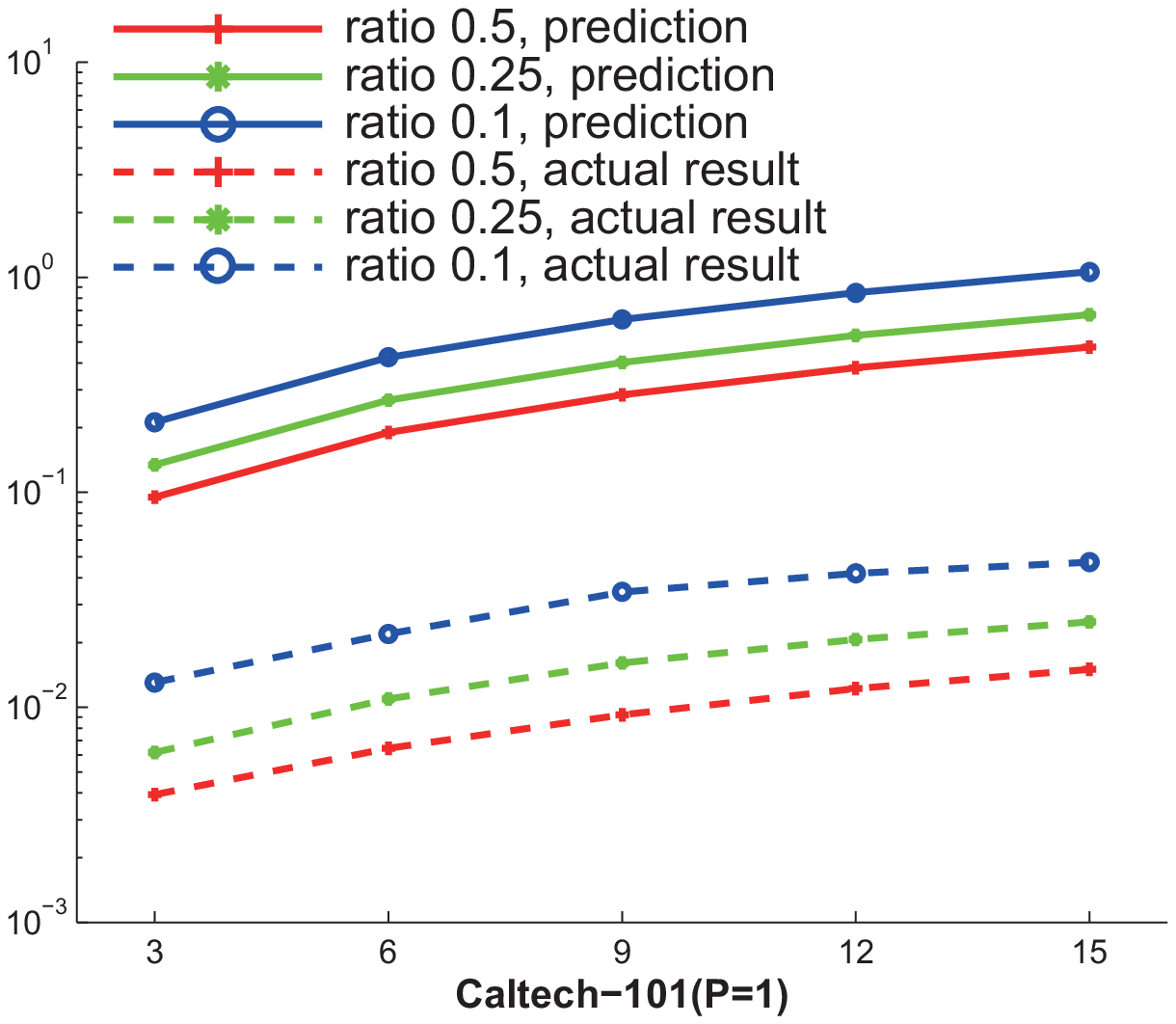}&
  \includegraphics[width=0.245\textwidth]{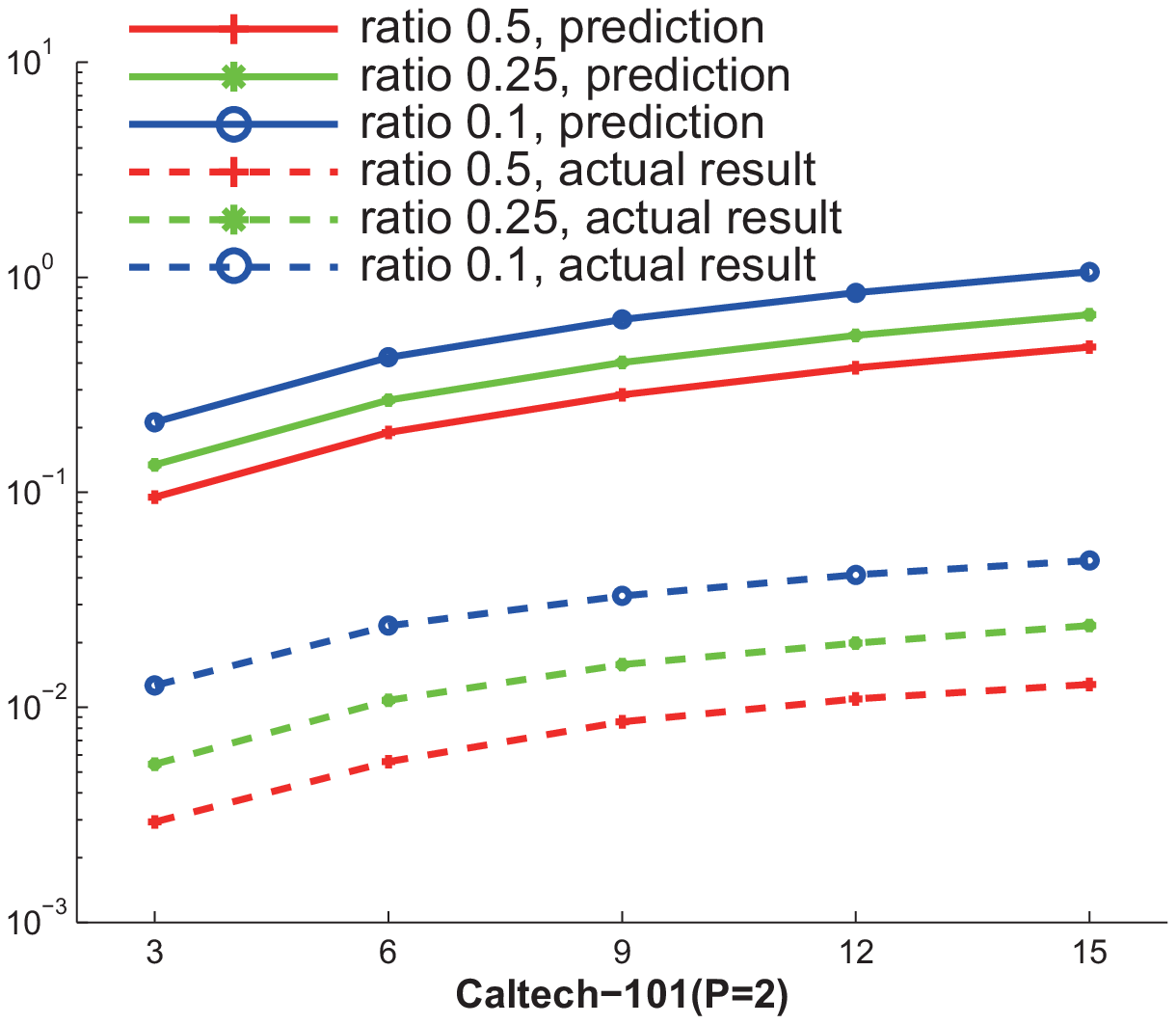}\\
  \includegraphics[width=0.245\textwidth]{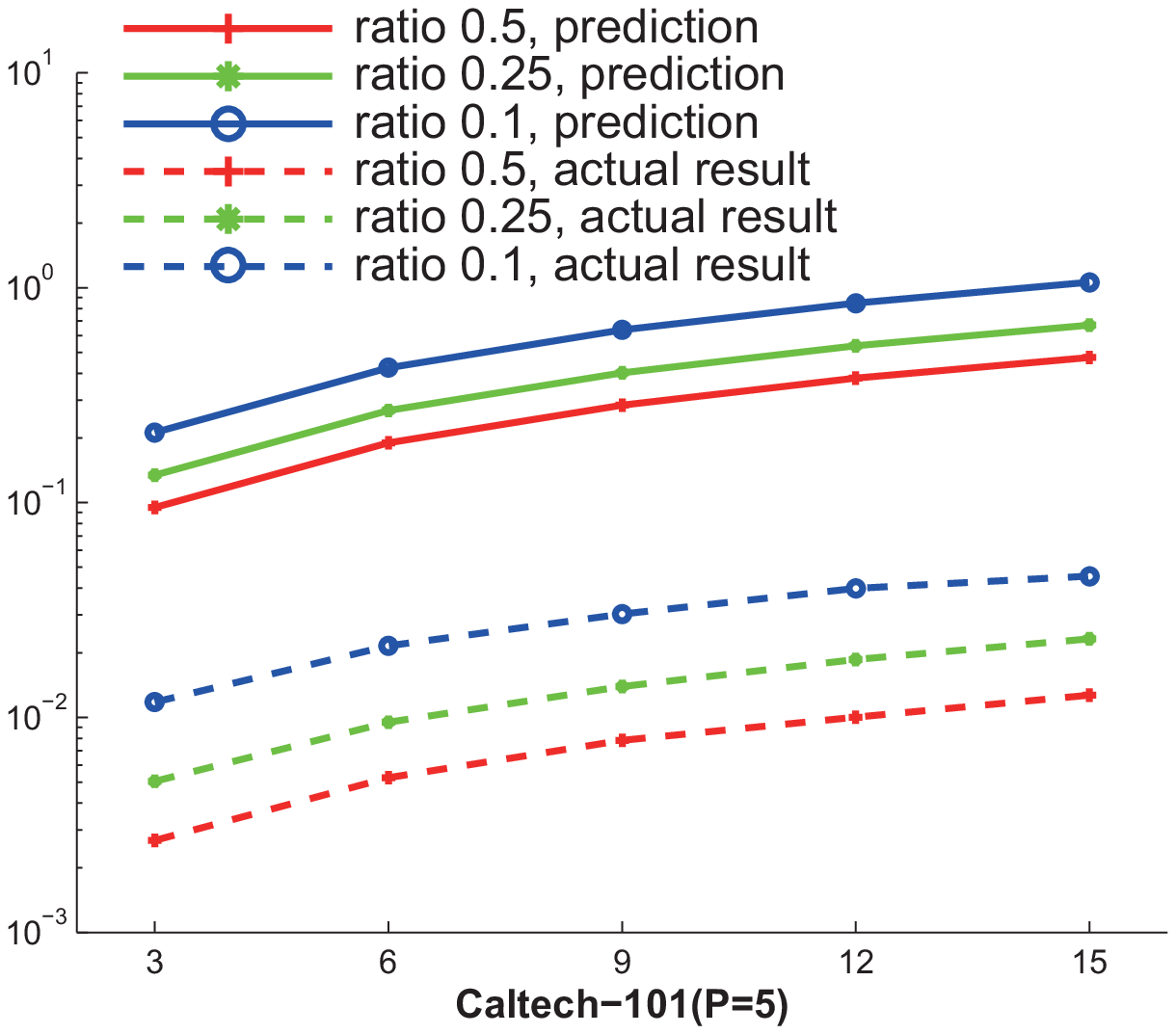}&
  \includegraphics[width=0.245\textwidth]{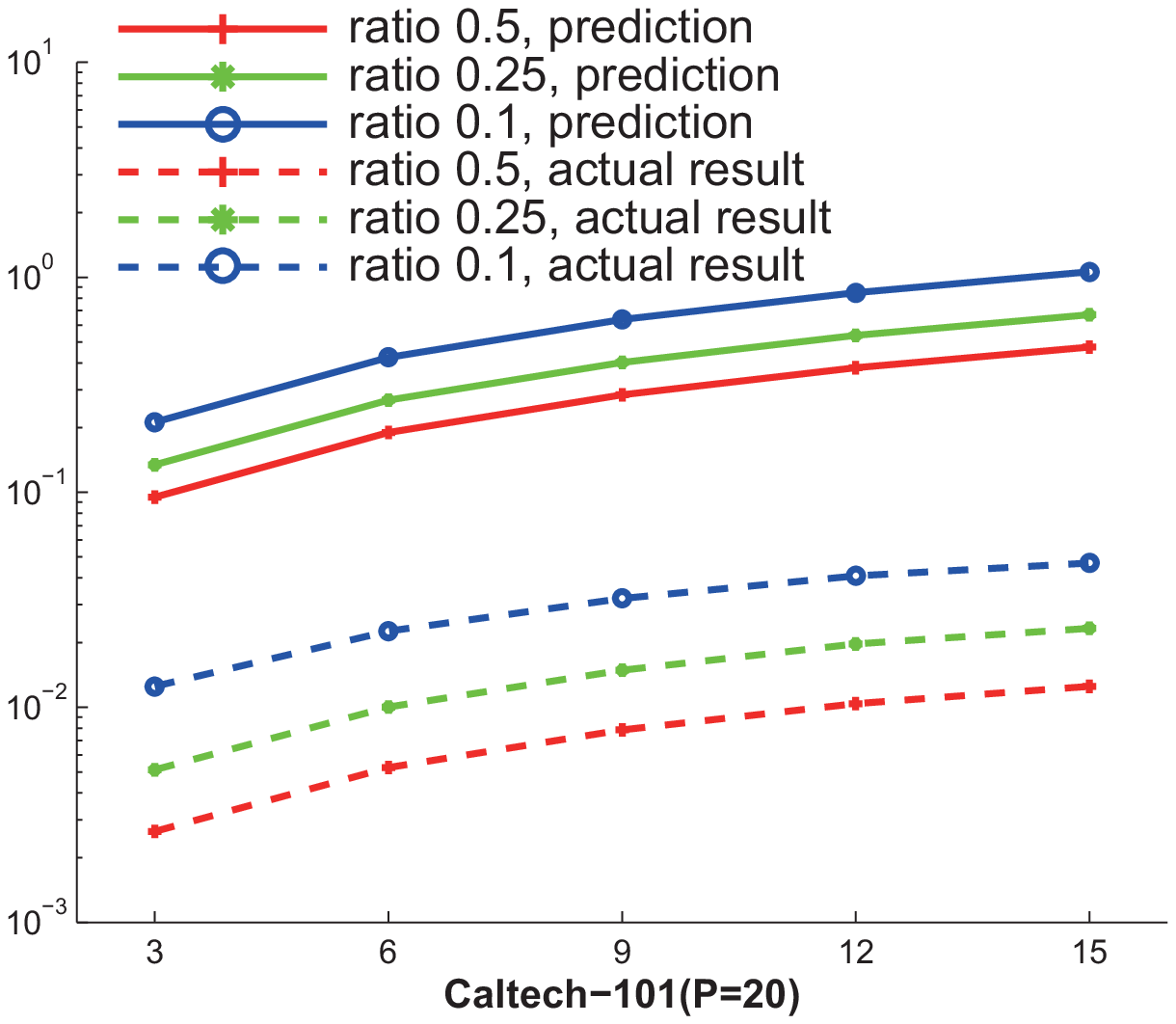}\\
\end{tabular}
\end{figure}
\begin{figure}
\noindent\begin{tabular}{cc}
  \includegraphics[width=0.245\textwidth]{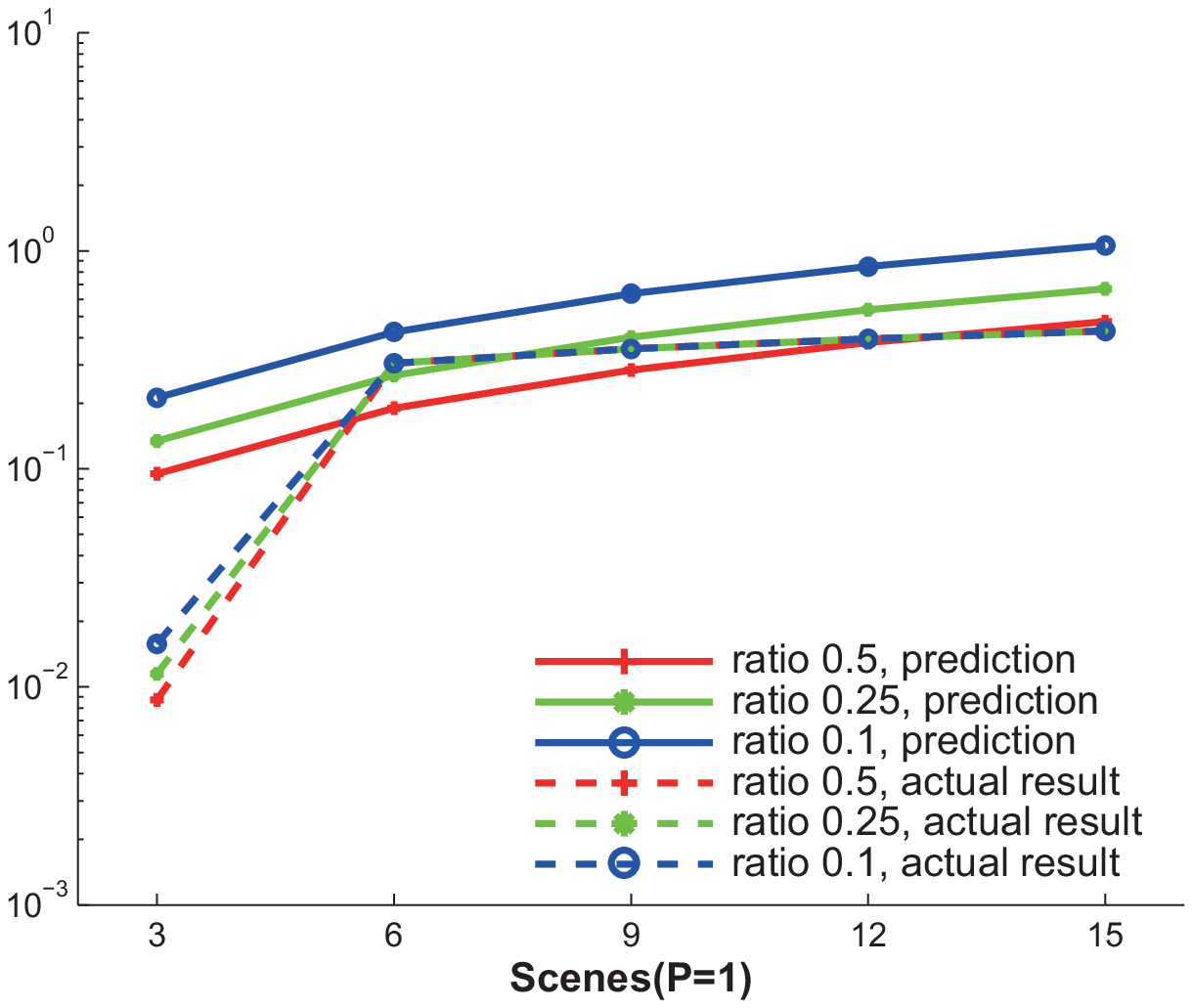}&
  \includegraphics[width=0.245\textwidth]{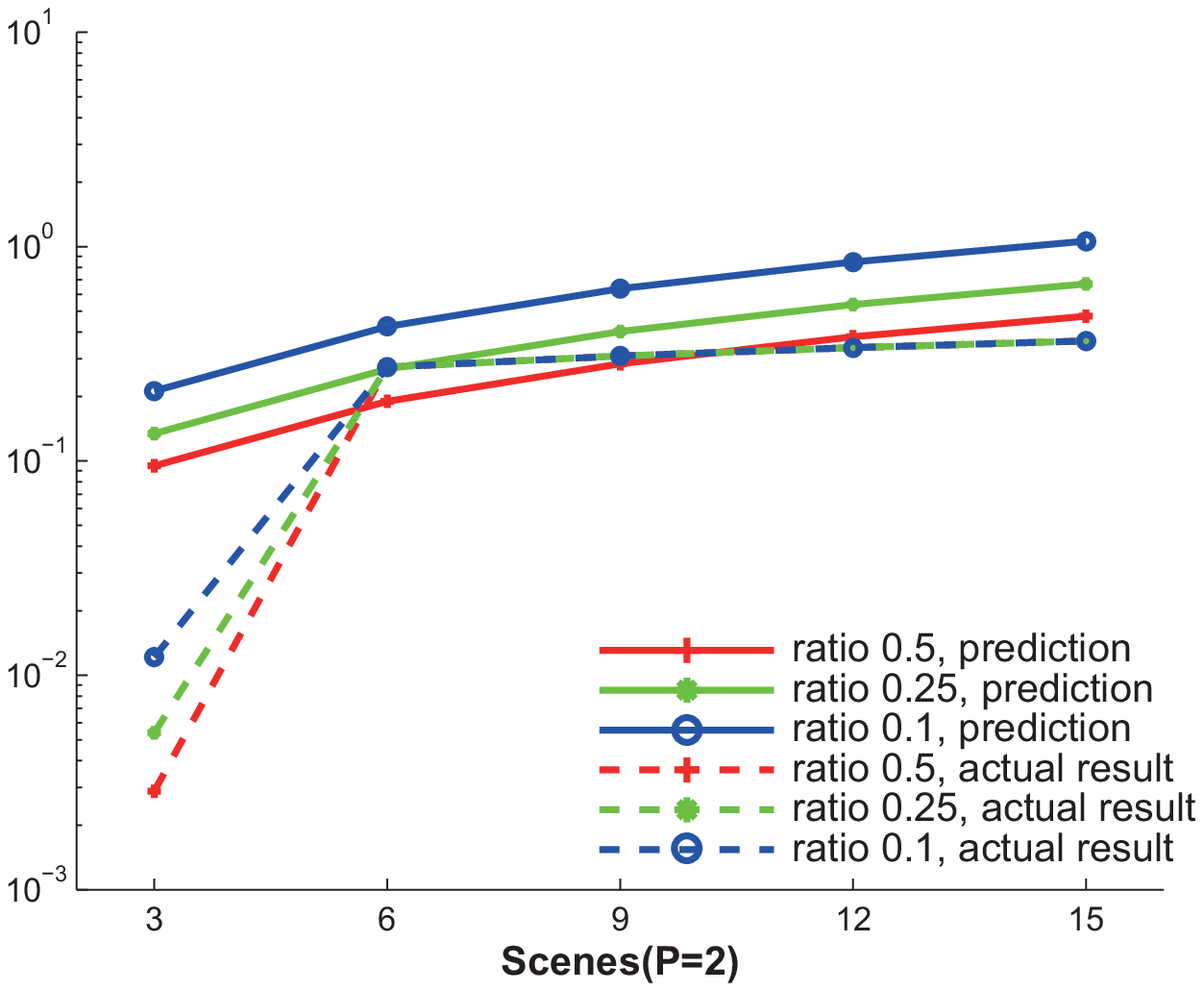}\\
  \includegraphics[width=0.245\textwidth]{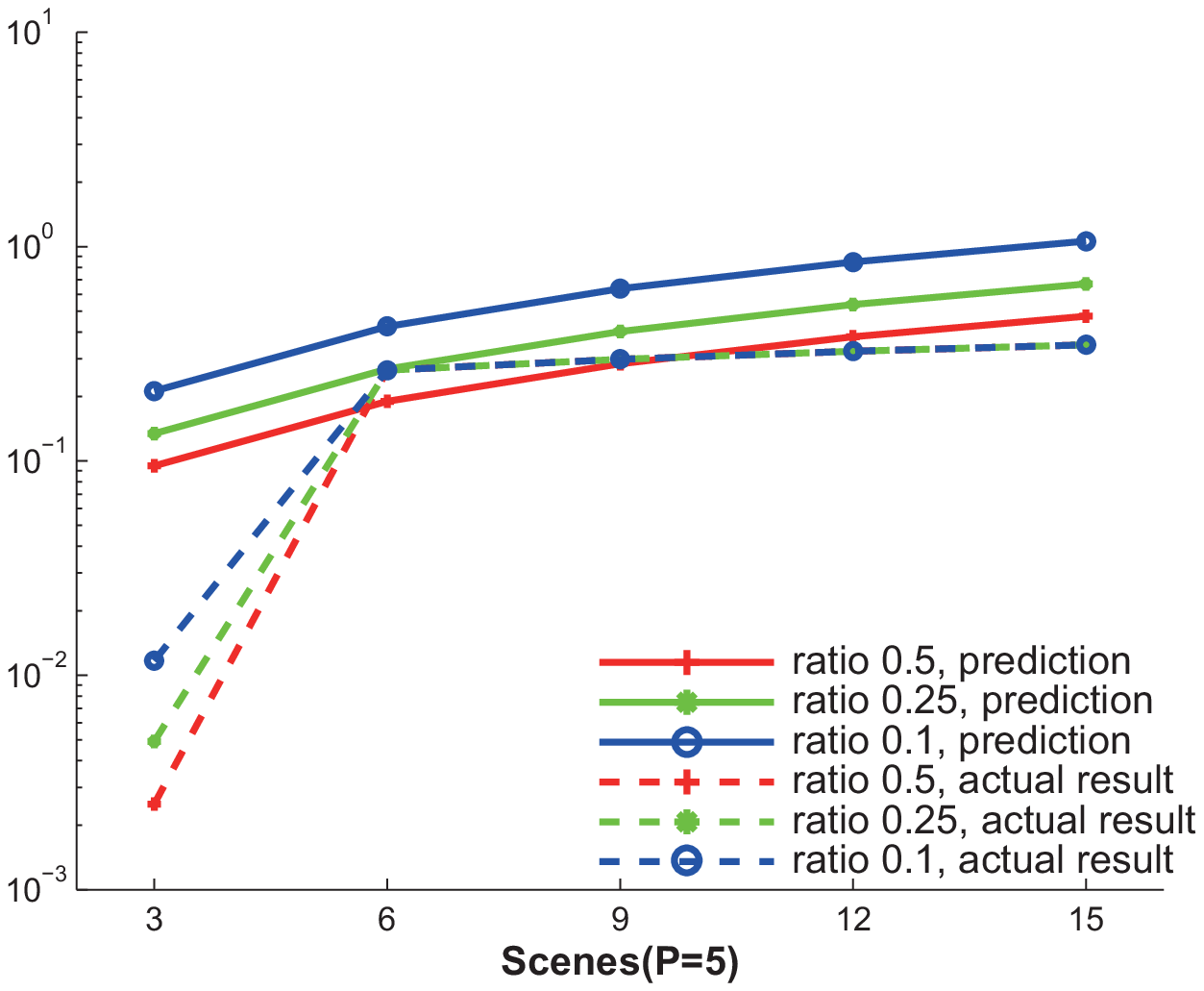}&
  \includegraphics[width=0.245\textwidth]{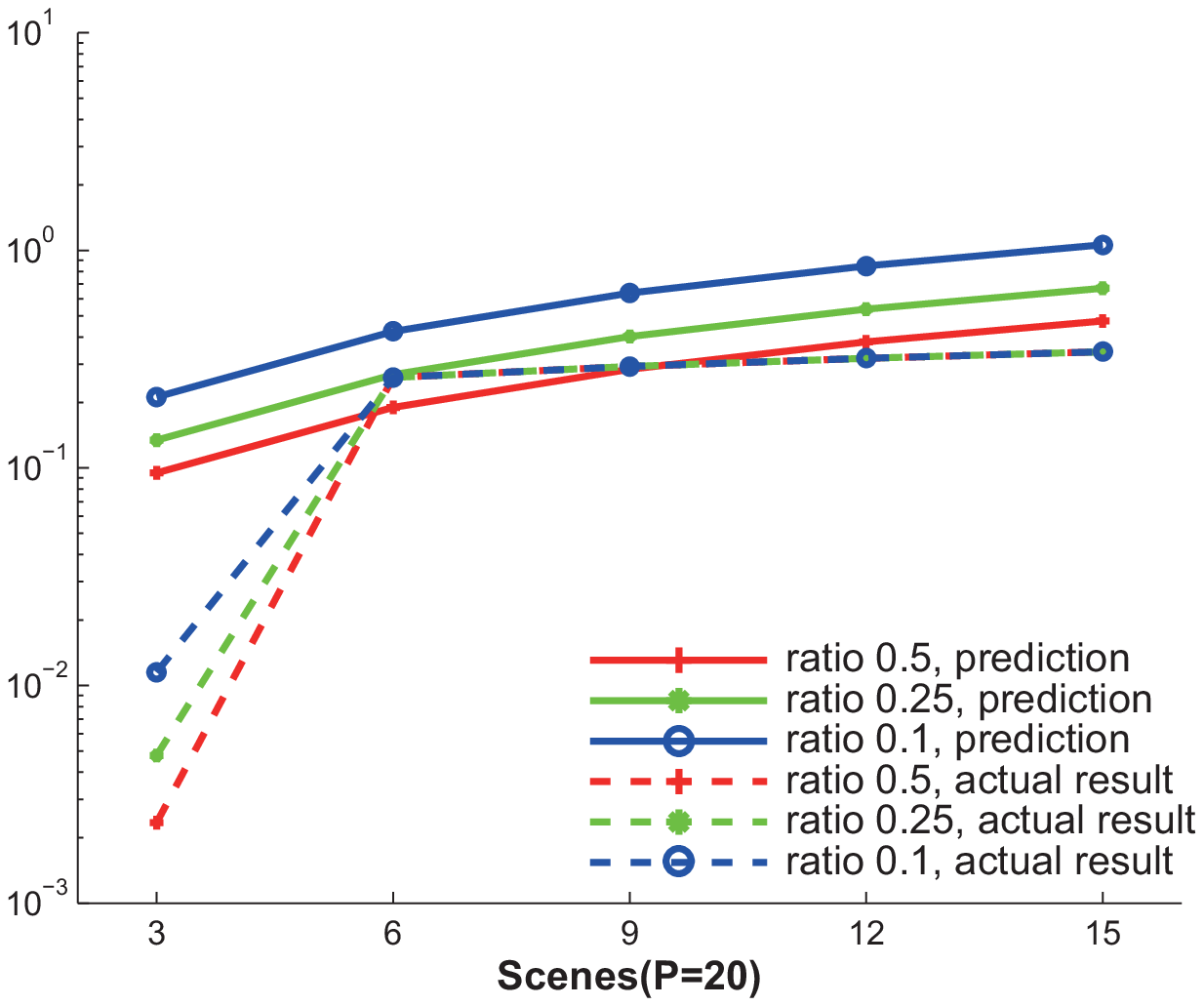}\\
  \includegraphics[width=0.245\textwidth]{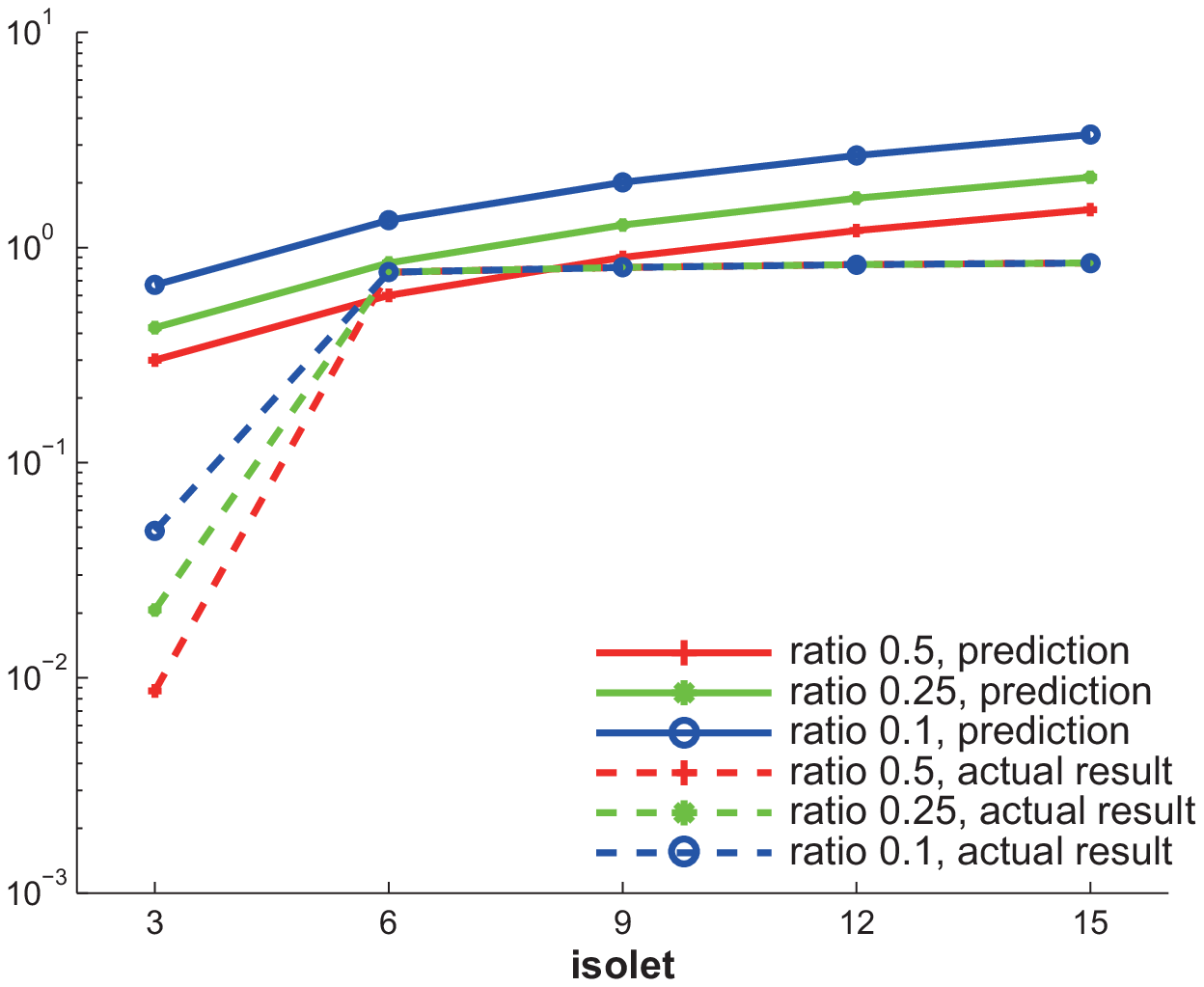}\\
\end{tabular}
\centering
\caption{Additive error v.s. projection dimension. For \textit{Forest Cover} and \textit{KDDCUP99}, we evaluate PCA for Gaussian Fourier features. For \textit{Caltech-101} and \textit{Scenes}, we evaluate on P-norm pooling features for different P. We evaluate robust PCA on \textit{isolet}.} \label{experimentresult1}
\end{figure}

\begin{figure}[b!]
\noindent\begin{tabular}{cc}
  \includegraphics[width=0.245\textwidth]{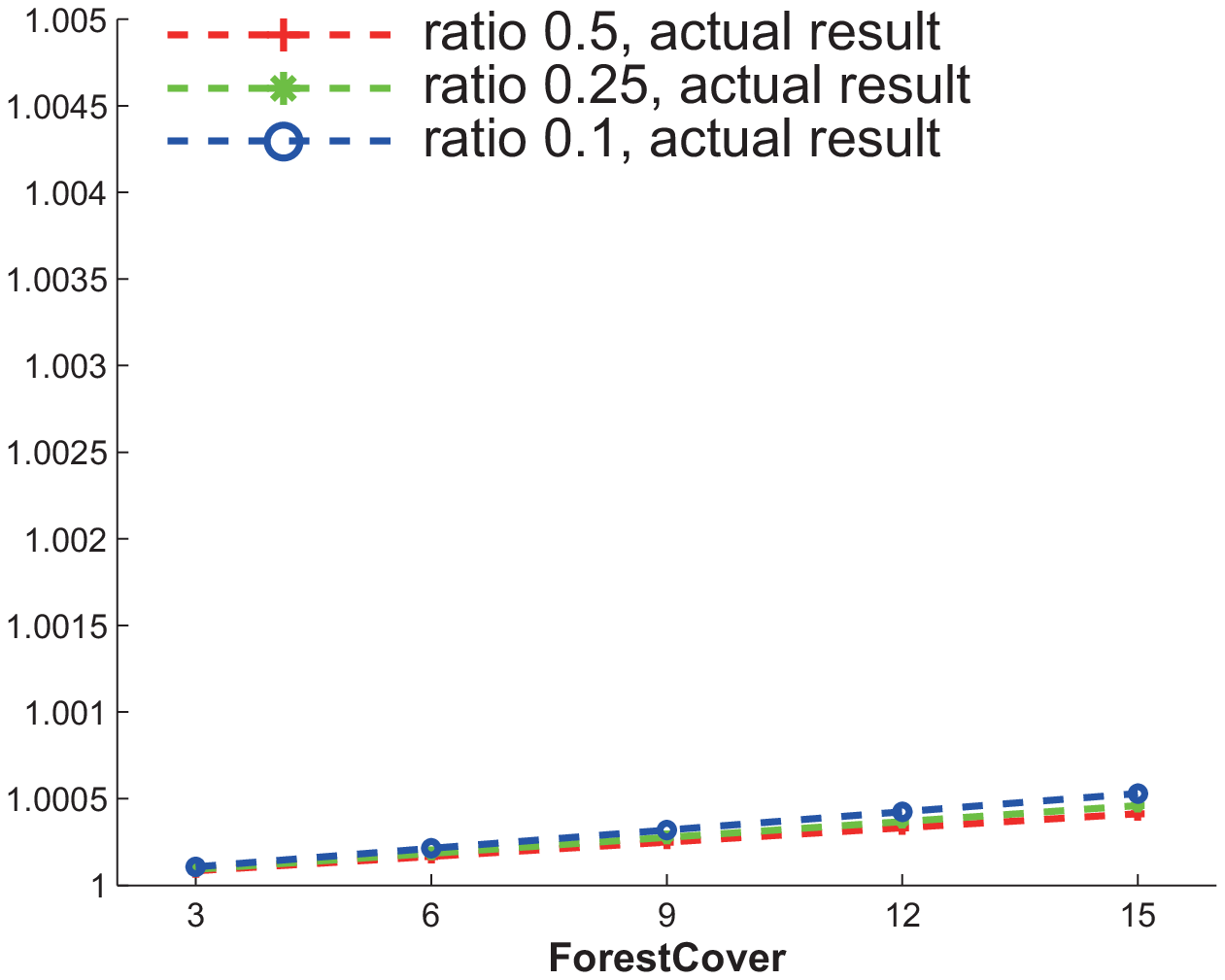}&
  \includegraphics[width=0.245\textwidth]{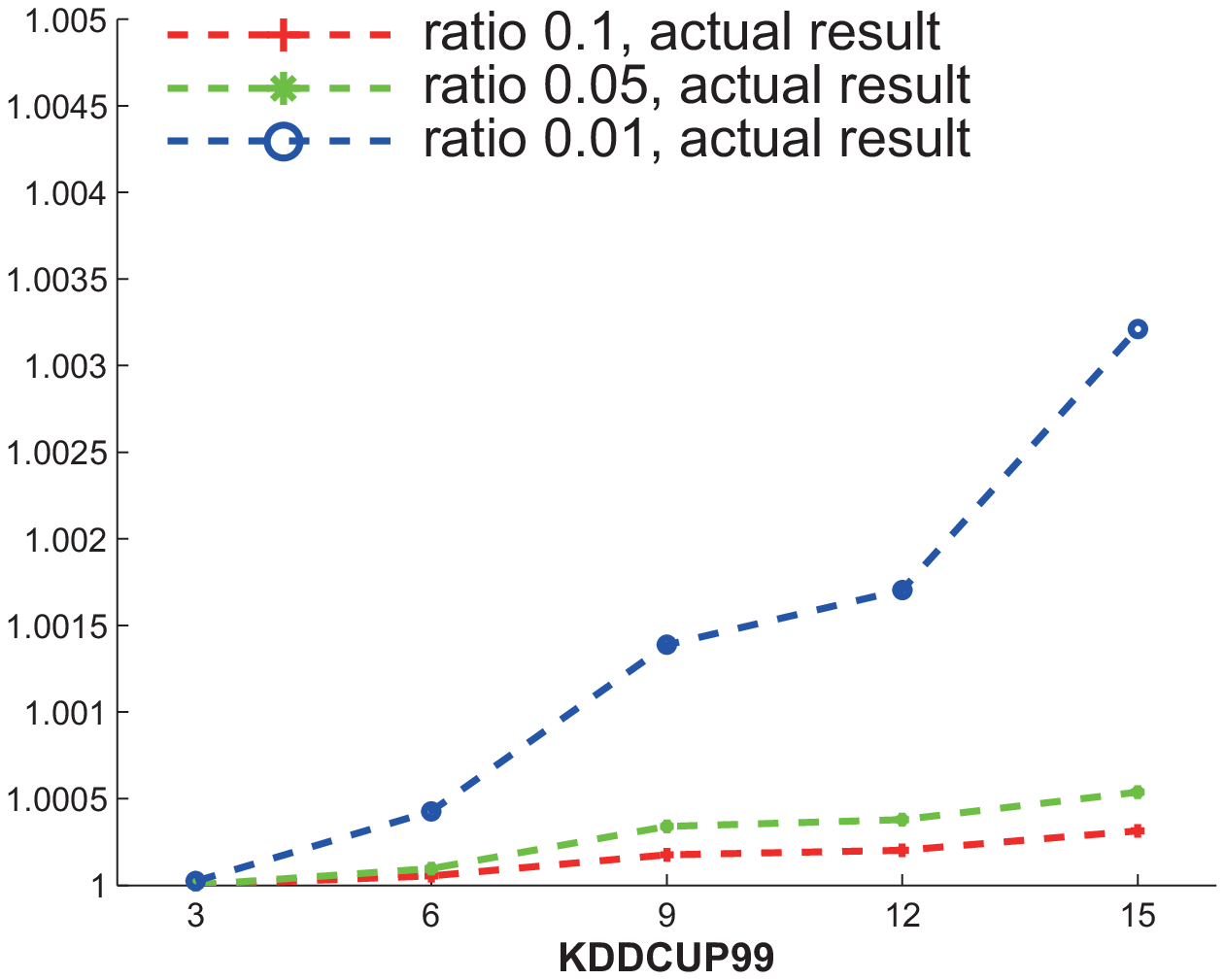}\\
  \includegraphics[width=0.245\textwidth]{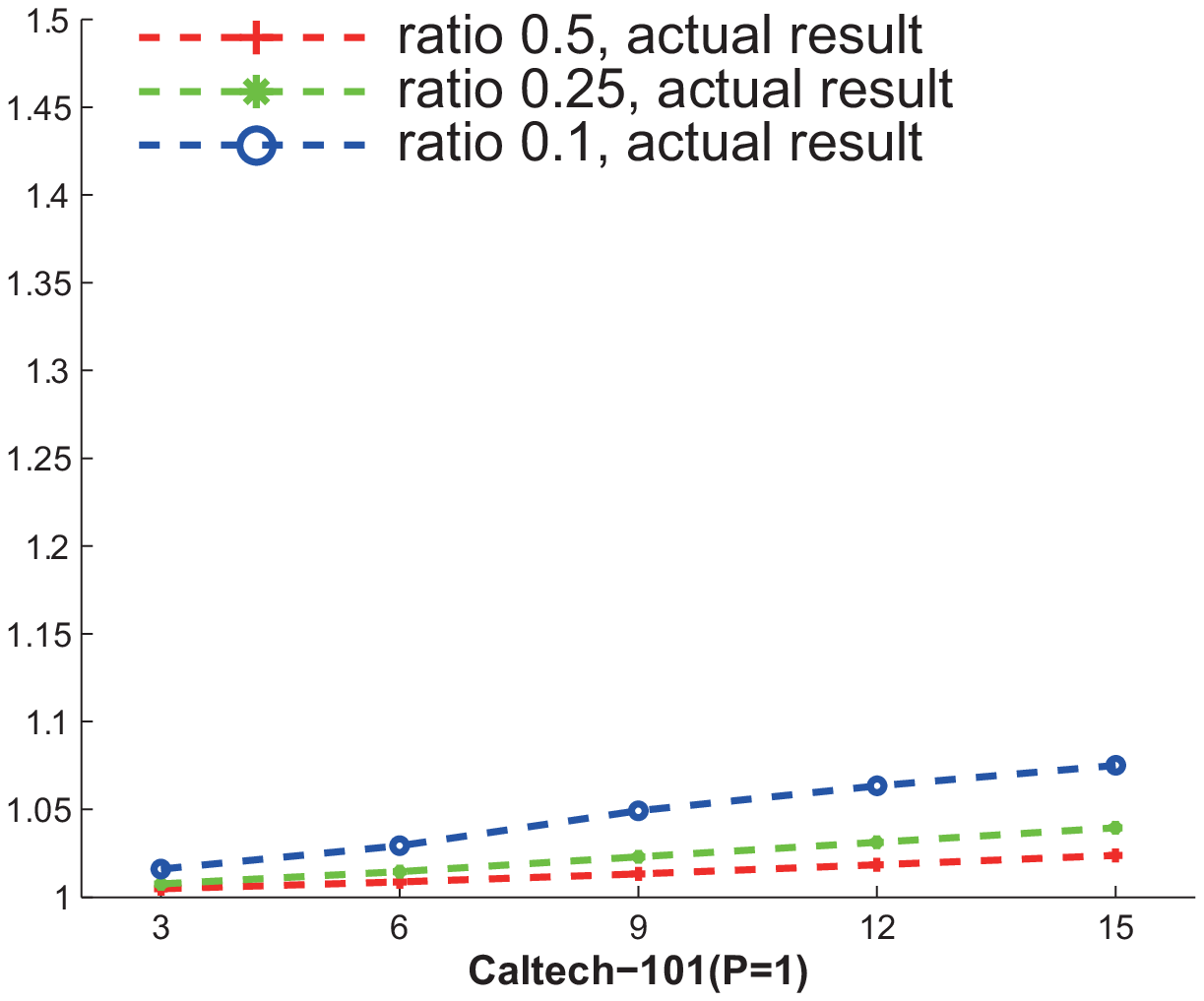}&
  \includegraphics[width=0.245\textwidth]{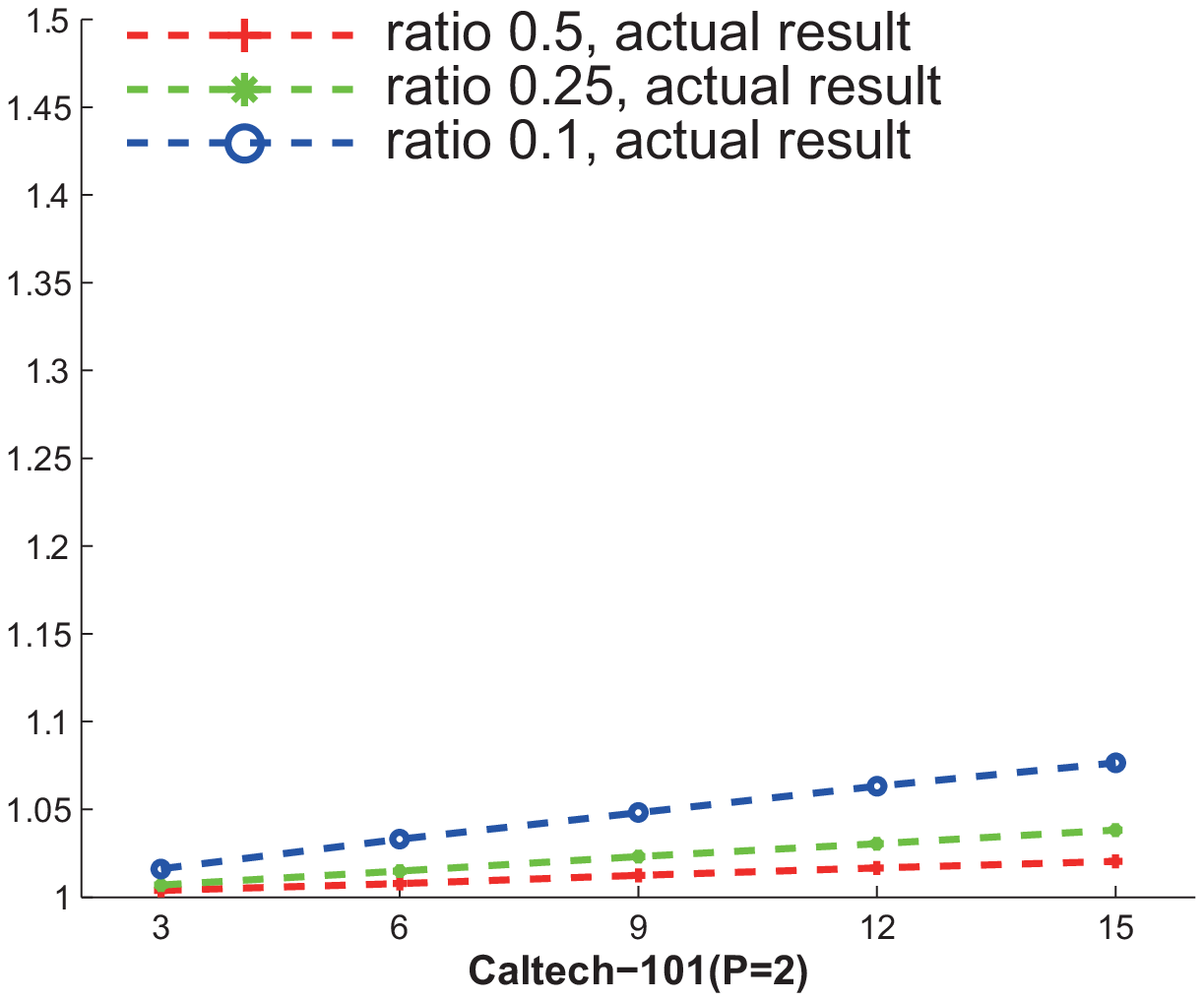}\\
  \includegraphics[width=0.245\textwidth]{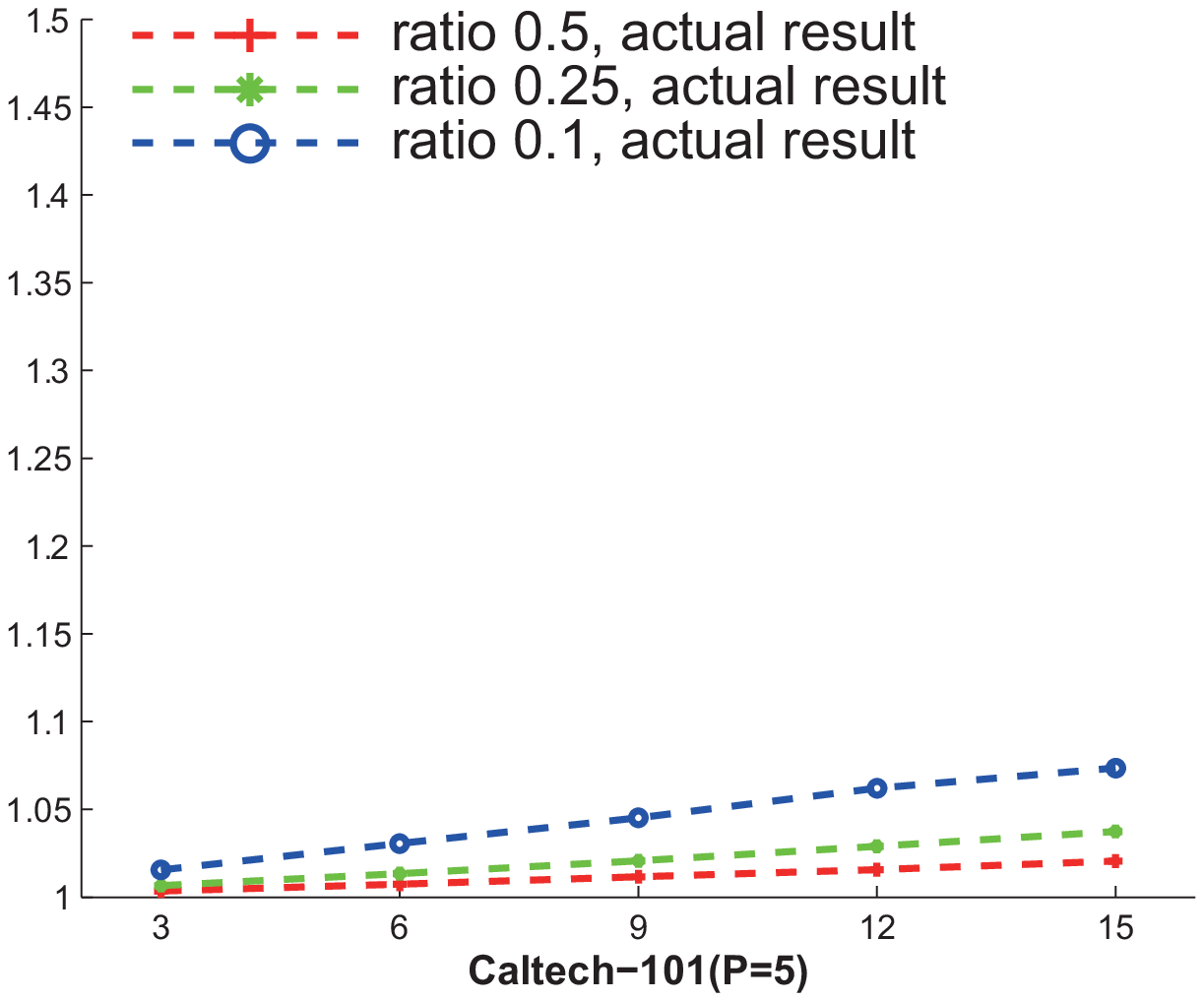}&
  \includegraphics[width=0.245\textwidth]{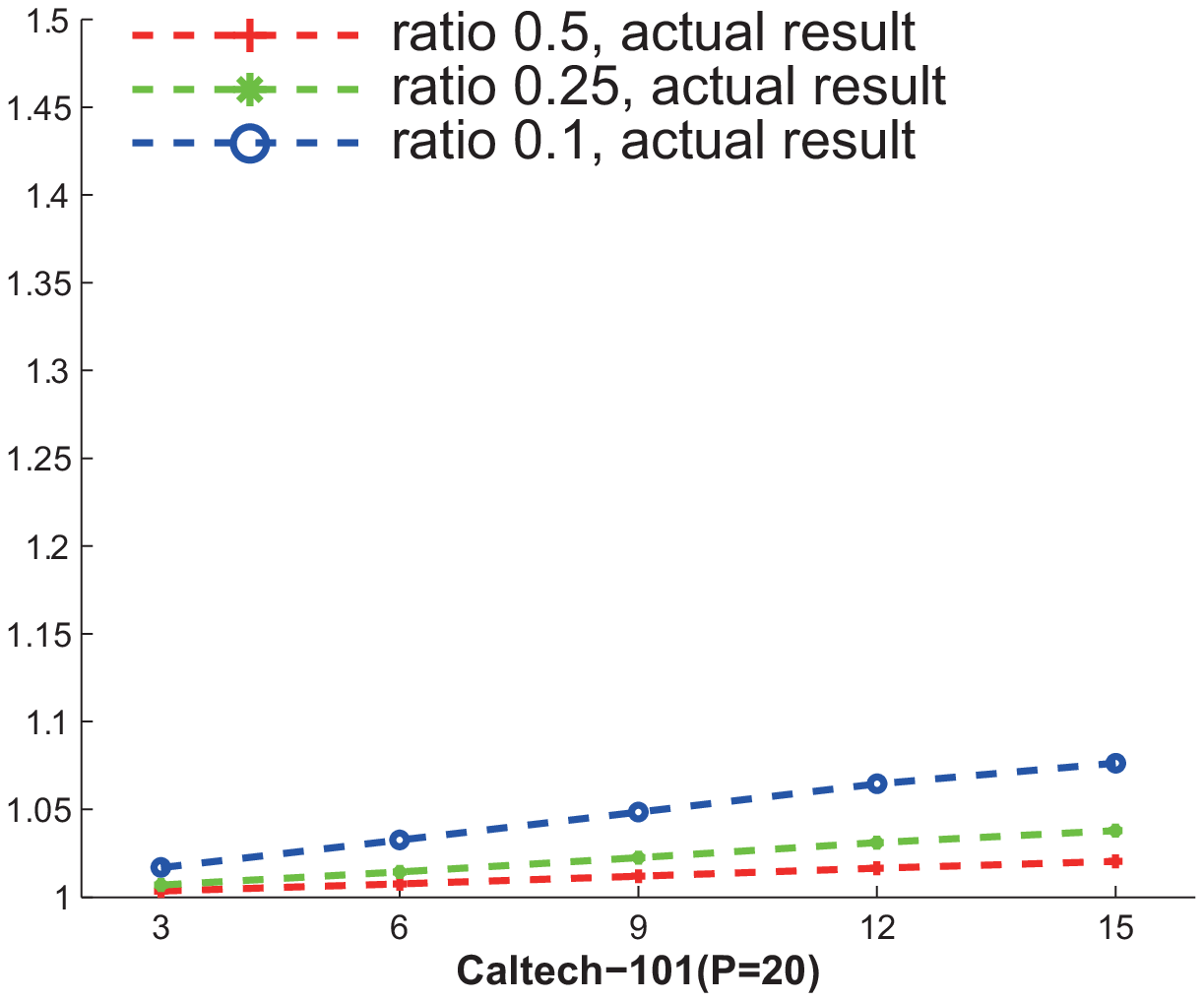}\\
\end{tabular}
\end{figure}
\begin{figure}
\noindent\begin{tabular}{cc}
  \includegraphics[width=0.245\textwidth]{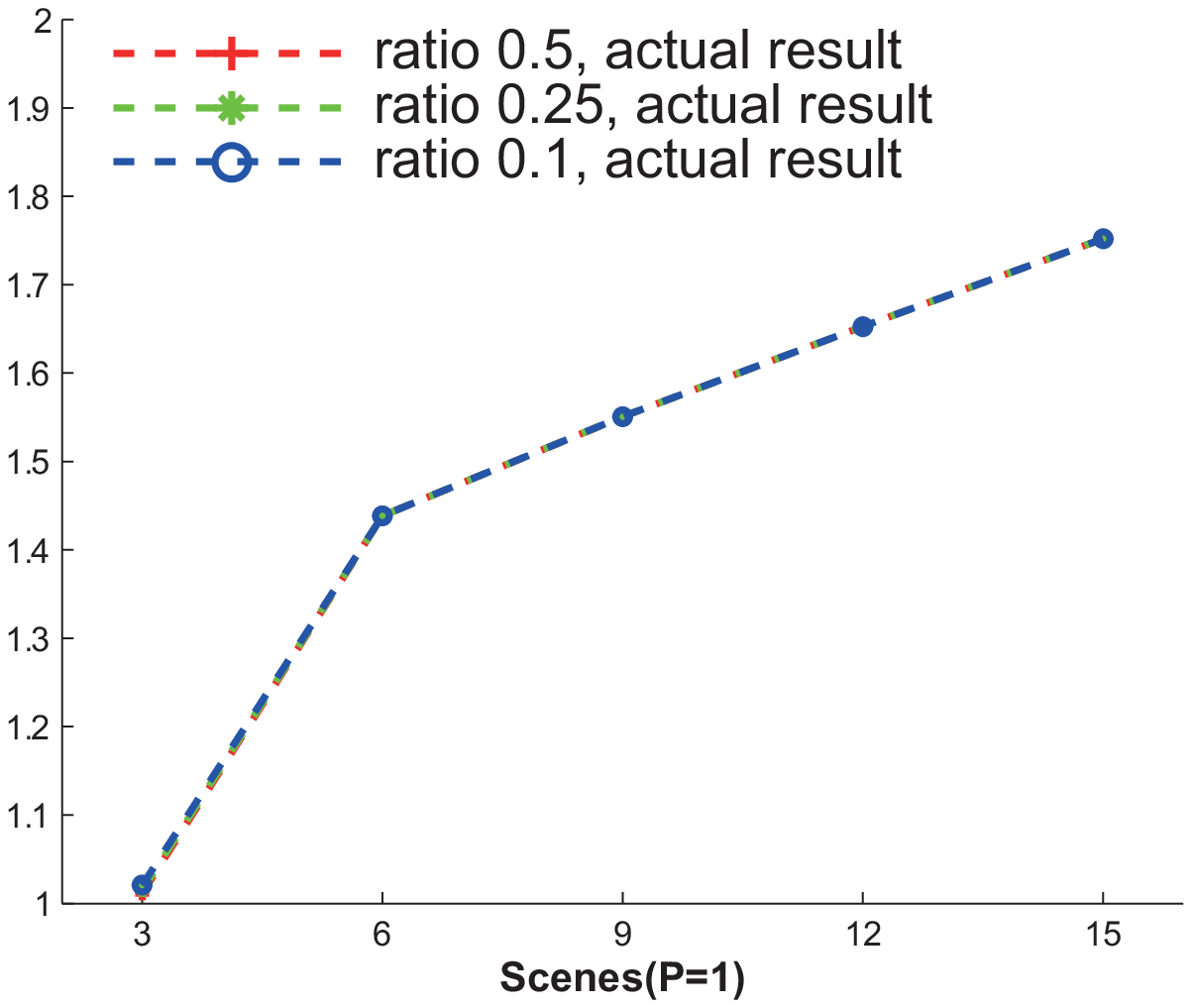}&
  \includegraphics[width=0.245\textwidth]{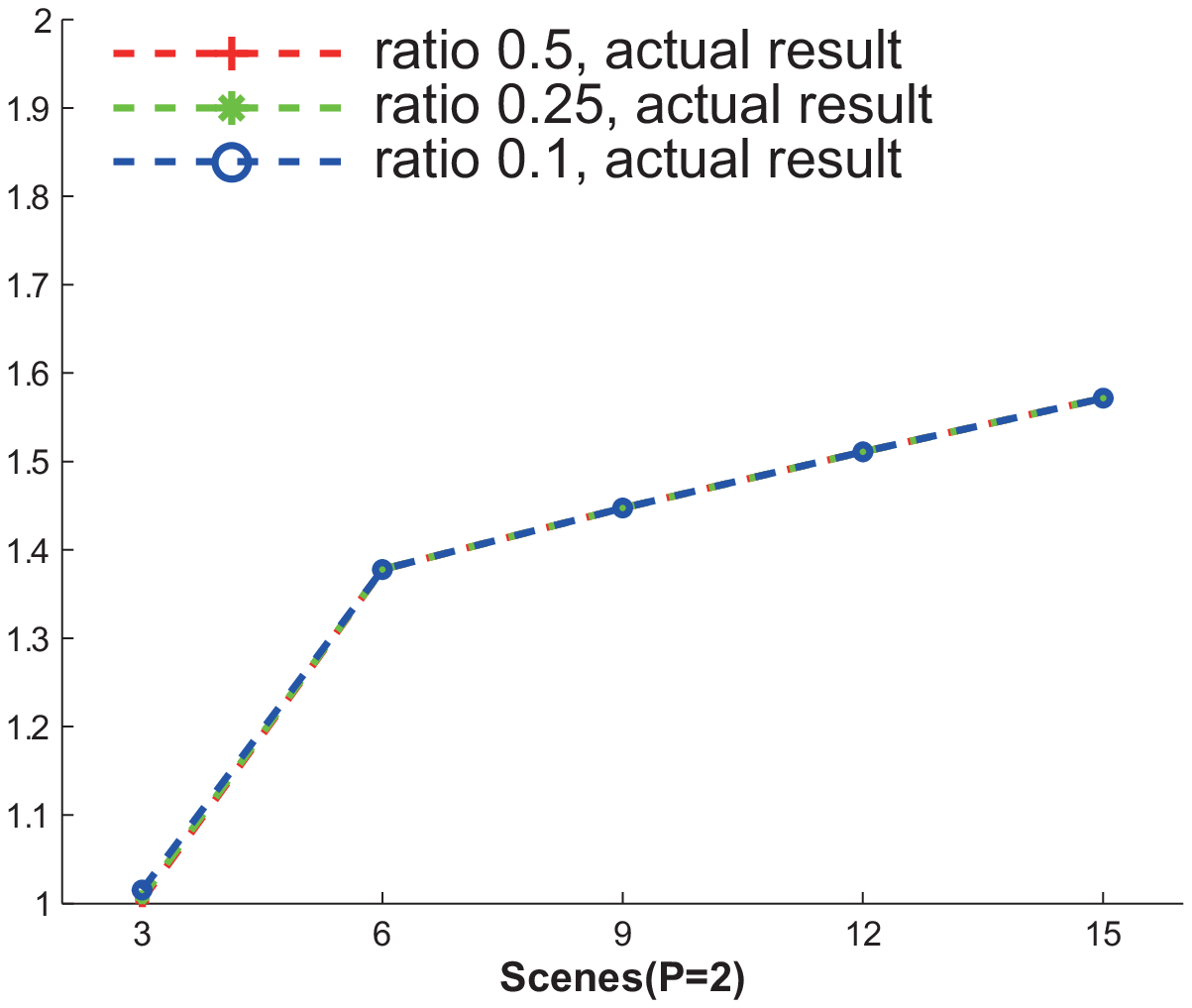}\\
  \includegraphics[width=0.245\textwidth]{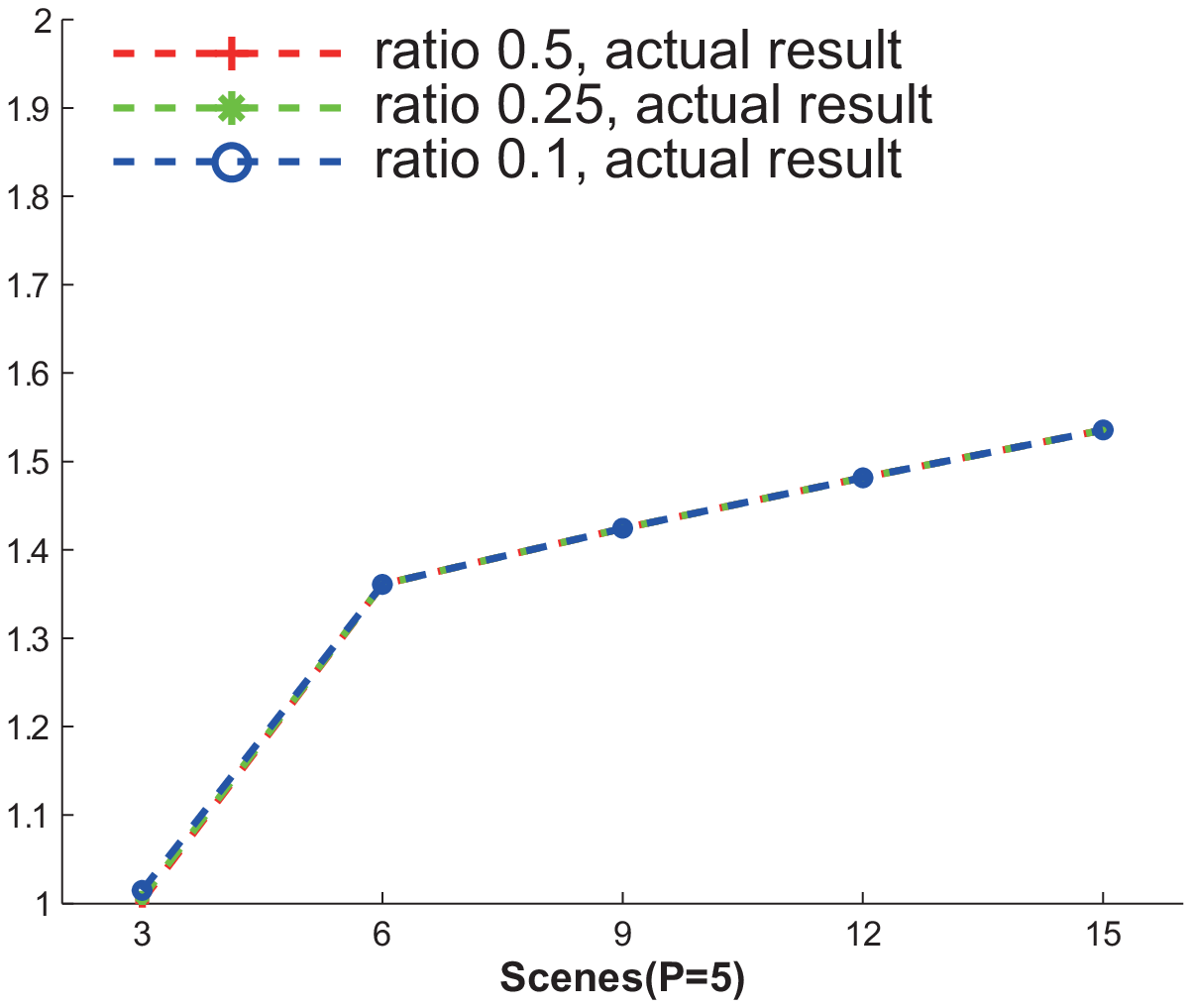}&
  \includegraphics[width=0.245\textwidth]{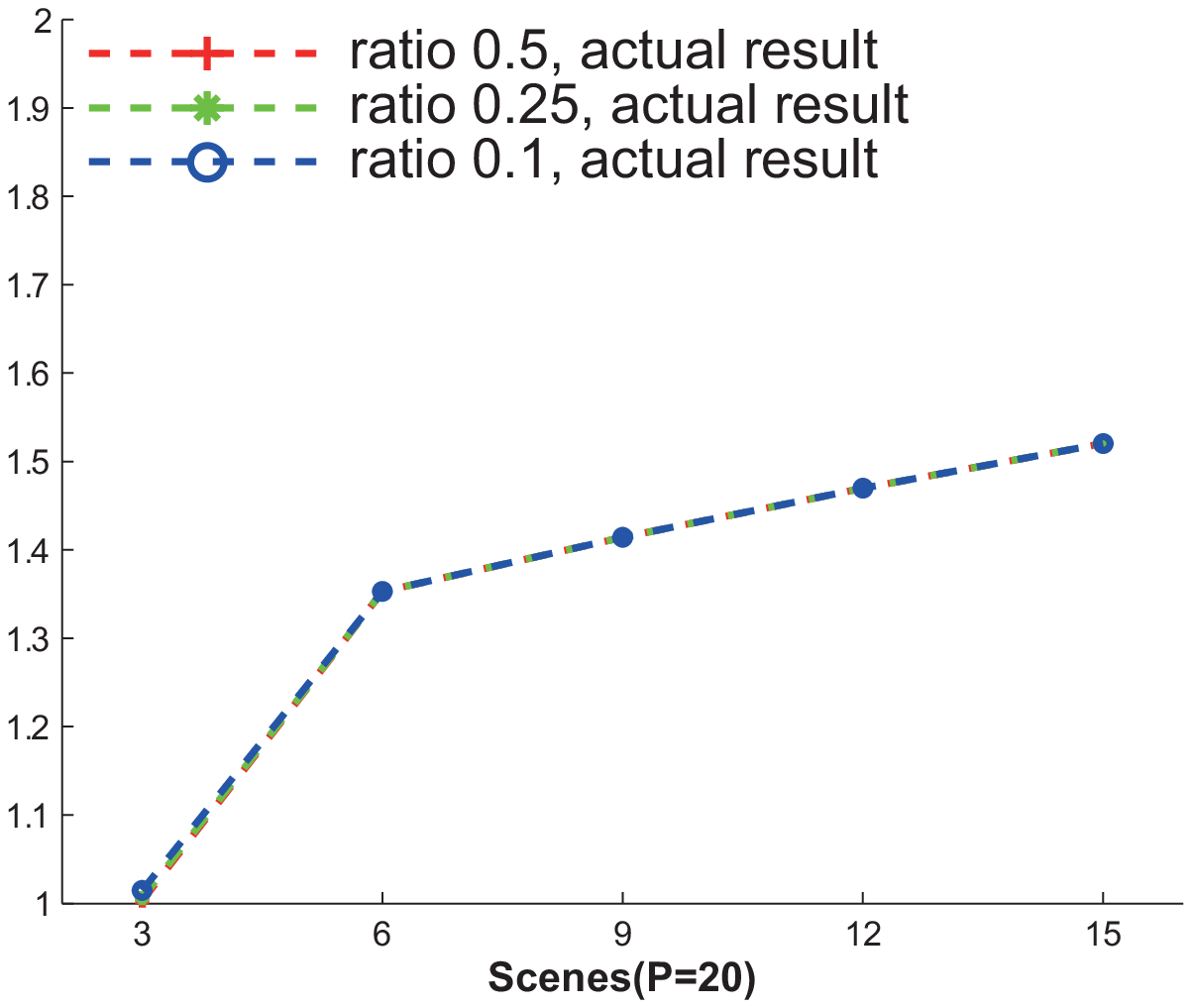}\\
  \includegraphics[width=0.245\textwidth]{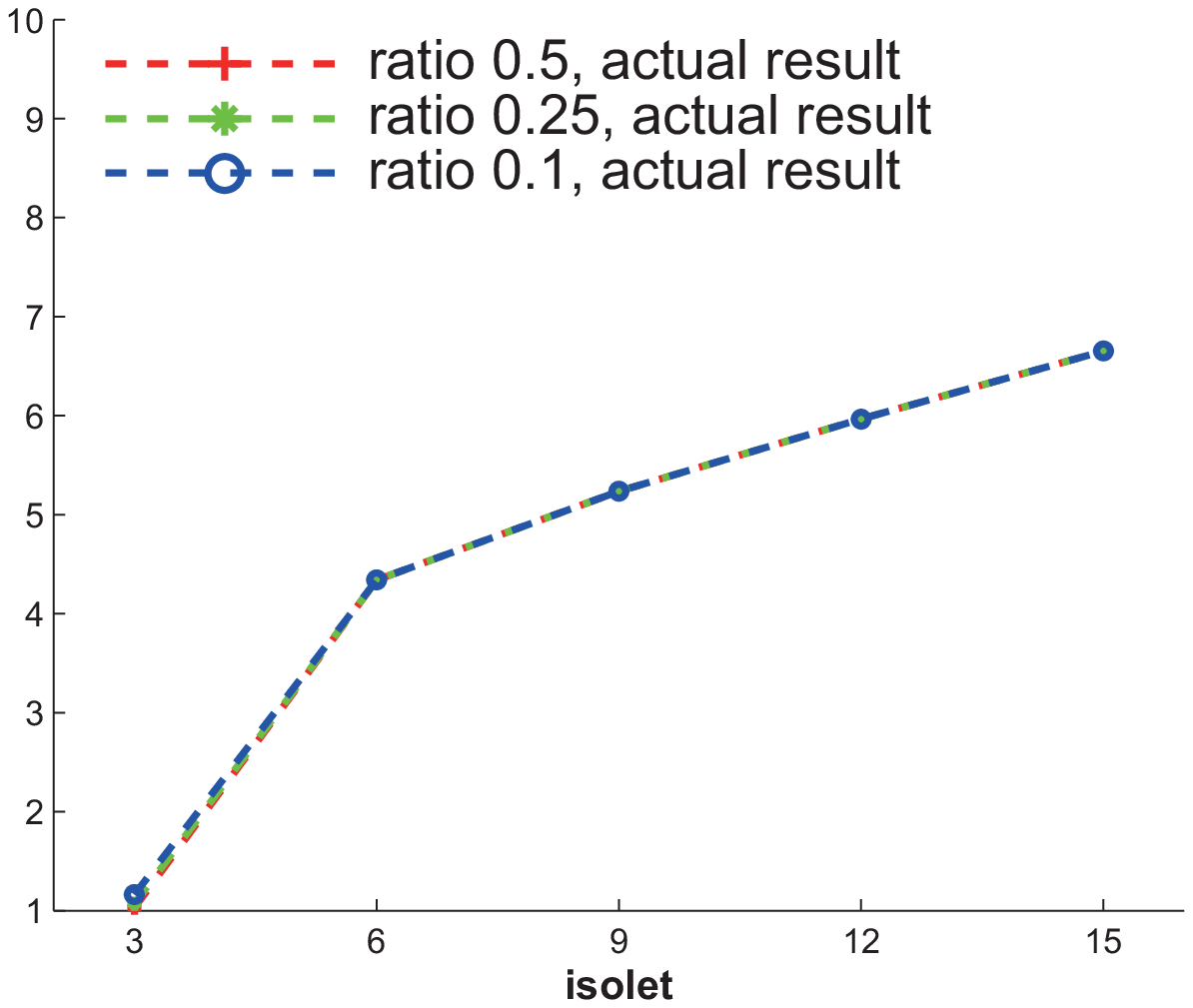}\\
\end{tabular}
\centering
\caption{Relative error v.s. projection dimension. The results of actual relative errors.} \label{experimentresult2}
\end{figure}

\section{Conclusions}
To the best of our knowledge, we proposed here the first non-trivial distributed protocol for the problem of computing a low rank approximation of a general function of a matrix. Our empirical results on real datasets imply that the algorithm can be used in real world applications. Although we only give additive error guarantees, we show the hardness of relative error guarantees in the distributed model we studied.

There are a number of interesting open questions raised by our work. Although our algorithm can work for a wide class of functions applied to a matrix, we still want to know whether there are efficient protocols for other functions which are not studied in this paper. Furthermore, this paper does not provide any lower bound for additive error protocols. It is an interesting open question whether there are more efficient protocols even with additive error.

\textbf{Acknowledgements} Peilin Zhong would like to thank Periklis A. Papakonstantinou for his very useful course \textit{Algorithms and Models for Big Data}. David Woodruff was supported in part by the XDATA program of the Defense Advanced Research Projects Agency (DARPA), administered through Air Force Research Laboratory contract FA8750-12-C-0323.



%



{
\bibliography{IEEEexample}
\bibliographystyle{unsrt}
}

\end{document}